\DeclareMathOperator\dom{dom}
\DeclareMathOperator\Tr{Tr}
\DeclareMathOperator\lin{lin}
\newcommand*\D{\mathop{}\!\textnormal{d}}
\newcommand*\E{\mathop{}\!\textnormal{e}}
\newcommand*\I{\mathop{}\!\textnormal{i}}
\numberwithin{equation}{section}
\newtheorem{theorem}{Theorem}[section]
\newtheorem*{theorem*}{Theorem}
\newtheorem{lemma}[theorem]{Lemma}
\newtheorem{proposition}[theorem]{Proposition}
\newtheorem*{proposition*}{Proposition}
\newtheorem{corollary}[theorem]{Corollary}
\theoremstyle{plain}
\theoremstyle{definition}
\newtheorem{definition}[theorem]{Definition}
\newtheorem{remark}[theorem]{Remark}
\newtheoremstyle{example}
  {.3\baselineskip}
  {.3\baselineskip}
  {\normalsize}  
  {0pt}       
  {\bfseries} 
  {.}         
  {5pt plus 1pt minus 1pt} 
  {}          
\theoremstyle{example}
\newtheorem{example}[theorem]{Example}
\newtheorem*{assumption*}{\assumptionnumber}
\providecommand{\assumptionnumber}{}
\newenvironment{assumption}[1]
 {%
  \renewcommand{\assumptionnumber}{Assumption $\mathfrak{#1}$}%
  \begin{assumption*}%
  \protected@edef\@currentlabel{$\mathfrak{#1}$}%
 }
 {%
  \end{assumption*}
 }
\newlist{theoremenum}{enumerate}{1}
\setlist[theoremenum]{label=\roman*), ref=\textup{\thetheorem~\roman*)}}
\newlist{defenum}{enumerate}{1}
\setlist[defenum]{label=\roman*), ref=\textup{\thedefinition~\roman*)}}
\def\e{\operatorname{e}} 
\def\eps{\varepsilon}
\renewcommand{\epsilon}{\eps}
\renewcommand{\MR}{\mathbb{R}}
\newcommand{\MC}{\mathbb{C}}
\newcommand{\MN}{\mathbb{N}}
\newcommand{\MP}{\mathbb{P}}
\newcommand{\MQ}{\mathbb{Q}}
\newcommand{\MS}{\mathbb{S}}
\newcommand{\MF}{\mathbb{F}}
\newcommand{\R}{\MR}
\newcommand{\N}{\MN}
\newcommand{\cF}{\mathcal{F}}
\newcommand{\cA}{\mathcal{A}}
\newcommand{\cB}{\mathcal{B}}
\newcommand{\cC}{\mathcal{C}}
\newcommand{\cD}{\mathcal{D}}
\newcommand{\cH}{\mathcal{H}}
\newcommand{\cK}{\mathcal{K}}
\newcommand{\cL}{\mathcal{L}}
\newcommand{\cM}{\mathcal{M}}
\newcommand{\cQ}{\mathcal{Q}}
\newcommand{\df}{\coloneqq}
\newcommand{\one}{\mathbf{1}}
\newcommand{\interior}[1]{({\kern0pt#1})^{\textnormal{o}}}
\newcommand{\set}[1]{\left\{ #1\right\}}
\newcommand{\norm}[1]{\|#1\|}
\newcommand{\EXspec}[2]{\mathbb{E}_{#1}\left[#2\right]}
\newcommand{\CEX}[2]{\mathbb{E}\left[#1\lvert\;#2\right]}
\newcounter{Task}\setcounter{Task}{1}
\newcommand{\sgc}{\begin{color}{blue}}
\newcommand{\cgs}{\end{color}}
\newcommand{\cHplus}{\cH^{+}}
\newcommand{\cHpluso}{\cHplus\setminus \{0\}}
\newcommand{\MRplus}{\MR^{+}}
\newcommand{\dm}{m(\D\xi)}
\newcommand{\dmu}{\mu(\D\xi)}
\newcommand{\Bsq}{\cB_{\rho}(\cHplus_{\mathrm{w}})}
\begin{document}
\title[]{Stationary covariance regime for affine stochastic covariance models in Hilbert spaces}  
\author{Martin Friesen}
\address[Martin Friesen]{School of Mathematical Sciences, Dublin City University, Dublin 9, Ireland}
\email{martin.friesen@dcu.ie}

\author{Sven Karbach$^{\ast}$}
\address[Sven Karbach]{Korteweg-de Vries Institute for Mathematics, University of Amsterdam, Postbus 94248, NL–1090 GE Amsterdam, The Netherlands}
\email{sven@karbach.org}
\thanks{$^{\ast}$The author is funded by The Dutch Research Council
  (NWO) (Grant No: C.2327.0099)}

\begin{abstract}
 We study the long-time behavior of affine processes on positive self-adjoiont Hilbert-Schmidt operators which are of pure-jump type, conservative and have finite second moment. For subcritical processes we prove the existence of a unique limit distribution and construct the corresponding stationary affine process. Moreover, we obtain an explicit convergence rate of the underlying transition kernels
 to the limit distribution in the Wasserstein distance of order $p\in [1,2]$ and provide explicit formulas for the first two moments of the limit distribution. 
 We apply our results to the study of infinite-dimensional affine stochastic covariance models in the stationary covariance regime, where the stationary affine process models the instantaneous covariance process. In this context we investigate the behavior of the implied forward volatility smile for large forward dates in a geometric affine forward curve model used for the modeling of forward curve dynamics in fixed income or commodity markets formulated in the Heath-Jarrow-Morton-Musiela framework.
\end{abstract}
\keywords{Infinite-dimensional affine processes, stationarity, ergodicity, stochastic covariance, forward-smile}

\maketitle

\section{Introduction}\label{sec:introduction}
     
Let $\left((H,(\cdot,\cdot)\right)$ be a real separable Hilbert space
and denote by $\left(\cH,\langle\cdot,\cdot\rangle\right)$ the Hilbert space
of all self-adjoint Hilbert-Schmidt operators on $H$ equipped with the trace
inner-product $\langle x,y\rangle\df\Tr(yx)$ for $x,y\in\cH$. Let $\cHplus \subset \cH$ stand for the cone of all positive self-adjoint Hilbert-Schmidt operators on $H$.
Given a time-homogeneous Markov process $(X_{t})_{t\geq 0}$ with values in $\cHplus$, we denote its transition kernels by $(p_{t}(x,\cdot))_{t\geq 0}$. Following the general terminology of affine processes, we call $(X_{t})_{t\geq 0}$
an \textit{affine} process on $\cHplus$, whenever the Laplace transform of
$X_{t}$, for every $t\geq 0$, is of an exponential affine form in the initial value $x\in\cHplus$, i.e. 
\begin{align}\label{eq:affine-transform-intro}
\int_{\cHplus}\E^{-\langle \xi,u\rangle}\,p_{t}(x,\D\xi)=\E^{-\phi(t,u)-\langle
  x,\psi(t,u)\rangle},\quad t\geq 0,\, u\in\cHplus\,, 
\end{align}
for some functions $\phi\colon \MRplus\times\cHplus\to\MRplus$ and $\psi\colon
\MRplus\times \cHplus \to \cHplus$. 
Affine processes are widely used for applications in finance due to their analytical tractability, i.e. their Fourier-Laplace transforms 
given by \eqref{eq:affine-transform-intro} are quasi-explicit up to the functions $\phi$ and $\psi$. Typically the functions $\phi$ and $\psi$ are the solutions of certain differential equations. 

In finite dimensions affine processes and their applications were studied by many authors on numerous state spaces including
the canonical state space $\R_+^d \times \R^n$, see, e.g. \cite{DFS03, keller2008affine, DL06, FM09, KM15, KMK10}, 
and the space of positive and symmetric $d \times d$-matrices $\MS_{d}^{+}$, see \cite{CFMT11, cuchiero2011affine}.
More recently it became increasingly popular to study their infinite-dimensional extensions, see, e.g. \cite{CT20, STY20, Gra16, cox2020affine, cox2021infinitedimensional}. In the present work we contribute to this new direction of research towards affine processes on infinite-dimensional state spaces by studying their long-time behavior. 

The affine processes on the state-space $\cHplus$, as considered in this work, have been first introduced and studied in \cite{cox2020affine}. Their construction is summarized in Theorem~\ref{thm:CKK20} below, see also~\cite[Theorem 2.8]{cox2020affine}. We want to emphasize here that affine processes on $\cHplus$ can be considered as the natural infinite-dimensional analog of affine processes on positive and symmetric $d\times d$-matrices $\MS_{d}^{+}$ studied in~\cite{CFMT11}. Indeed, for $H=\MR^{d}$ the self-adjoint Hilbert-Schmidt operators on $H$ are precisely the symmetric matrices equipped with the Frobenius norm and $\cHplus=\MS_{d}^{+}$ holds in this case. Due to their flexibility and tractability, affine processes on $\MS_d^+$ are widely used to model the instantaneous covariance process in multi-variate stochastic volatility models, see~\cite{CFMT11, LT08, BNS07, GS10}. Analogously, affine processes on the state-space $\cHplus$ are well-suited to model the instantaneous covariance process in infinite-dimensional stochastic covariance models, see~\cite{cox2021infinitedimensional}. The long-time behavior of affine processes, used to model the instantaneous covariance process, plays an important role in the calibration of stochastic covariance models, see~\cite{AKR16, LMP22}.

\subsection{Contribution and related literature}

In this article we deal with the long-time behavior of affine processes on $\cHplus$. In particular we are interested in the existence and uniqueness of a limit distribution $\pi$ as the time $t$ tends to infinity. Heuristically, if the transition kernels $(p_{t}(x,\cdot))_{t\geq 0}$ converge weakly to some probability measure $\pi$ on $\cHplus$, then $\pi$ is an \textit{invariant measure} for
$(p_{t}(x,\cdot))_{t\geq 0}$, i.e.
\begin{align*}
\int_{\cHplus}p_{t}(x,\D\xi)\,\pi(\D x)=\pi(\D\xi), \quad \text{for all
}t\geq 0 \text{ and } x\in\cHplus.  
\end{align*}
Once the existence of an invariant measure is established, we study the construction of the corresponding stationary process. The latter one allows us to introduce stochastic covariance models in the \emph{stationary covariance regime}, see Section~\ref{sec:affine-sv-models-1} of this work. Within this context we model the implied forward volatility smile in a geometric affine covariance model for forward curve dynamics formulated in the Heath-Jarrow-Morton-Musiela (HJMM) modeling framework, see Section~\ref{sec:model-forw-curve} below. In addition to the above, we are also interested in explicit convergence rate towards the invariant measure. 

Below we provide a short version of our main result on the long-time behavior of affine processes on $\cHplus$.
We call an affine process on $\mathcal{H}^+$ \emph{subcritical}, if its state-dependent drift is negative (i.e. all eigenvalues have strictly negative real-parts), see Assumption~\ref{assump:subcritical-pure-jump} below. 
\begin{theorem*}
  Let $(X_{t})_{t\geq 0}$ be a subcritical affine process on $\cHplus$ with
  transition kernels $(p_{t}(x,\cdot))_{t\geq 0}$, the existence of which is
  guaranteed by Theorem~\ref{thm:CKK20} below. Then the following holds true: 
  \begin{enumerate}
  \item[i)]\label{item:existence-intro} There exists a unique invariant
    measure $\pi$ of $(p_{t}(x,\cdot))_{t\geq 0}$ for all $x\in\cHplus$.
  \item[ii)]\label{item:convergence-rates-intro} For every $x\in\cHplus$ and
    $p\in [1,2]$ the sequence $(p_{t}(x,\cdot))_{t\geq 0}$ converges
    exponentially fast to $\pi$ as $t\to\infty$ in the
    Wasserstein distance of order $p$.
  \item[iii)]\label{item:stationarity-intro} There exists a Markov process
    $(X_{t}^{\pi})_{t\geq 0}$, with transition kernels
    $(p_{t}(x,\cdot))_{t\geq 0}$, such that the distribution of
    $X_{t}^{\pi}$ is equal to $\pi$ for all $t\geq 0$.   
  \end{enumerate}
\end{theorem*}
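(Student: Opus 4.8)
The plan is to read everything off the affine transform \eqref{eq:affine-transform-intro}, whose exponents solve the associated generalized Riccati equations $\partial_t\psi(t,u)=R(\psi(t,u))$ with $\psi(0,u)=u$, and $\partial_t\phi(t,u)=F(\psi(t,u))$ with $\phi(0,u)=0$, where $F,R$ are the functional characteristics of the generator furnished by \cref{thm:CKK20}. For part~i) I would first show that subcriticality forces $\psi(t,u)\to 0$ in $\cHplus$ exponentially fast. The drift operator $B$ is the linear part of $R$ at the origin, and by the subcriticality assumption its spectrum lies in $\{\lambda\in\MC:\opRee{\lambda}<0\}$, so that $\norm{\E^{tB}}\le M\E^{-\omega t}$ for some $\omega>0$; writing the Riccati equation in variation-of-constants form and applying Gronwall's inequality then yields $\norm{\psi(t,u)}\le C\E^{-\omega t}\norm{u}$, first for small $u$ and then for all $u\in\cHplus$ by the semiflow property. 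Because the process is conservative one has $R(0)=0$ and $F(0)=0$, whence $|F(\psi(s,u))|\le C'\E^{-\omega s}\norm{u}$ for large $s$ is integrable and $\phi(t,u)\to\phi_\infty(u)\df\int_0^\infty F(\psi(s,u))\,\D s$. Letting $t\to\infty$ in \eqref{eq:affine-transform-intro}, the right-hand side converges to $\E^{-\phi_\infty(u)}$, a limit independent of the initial value $x$.

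To finish i) I must identify $u\mapsto\E^{-\phi_\infty(u)}$ as the Laplace transform of a probability measure $\pi$ on $\cHplus$. For this I would invoke a continuity theorem for Laplace transforms on the cone: the maps $u\mapsto\E^{-\phi(t,u)-\langle x,\psi(t,u)\rangle}$ are Laplace transforms of the probabilities $p_t(x,\cdot)$, they converge pointwise, and $\phi_\infty$ is continuous at $u=0$ with $\phi_\infty(0)=0$, which pins down a unique $\pi$ with $\int_{\cHplus}\E^{-\langle\xi,u\rangle}\,\pi(\D\xi)=\E^{-\phi_\infty(u)}$. Invariance follows from the identity $\phi(t,u)+\phi_\infty(\psi(t,u))=\phi_\infty(u)$, which is immediate from the flow property $\psi(r,\psi(t,u))=\psi(r+t,u)$ and the definition of $\phi_\infty$; uniqueness is immediate because any invariant $\mu$ satisfies $\int_{\cHplus}\E^{-\langle\xi,u\rangle}\,\mu(\D\xi)=\int_{\cHplus}\E^{-\phi(t,u)-\langle x,\psi(t,u)\rangle}\,\mu(\D x)\to\E^{-\phi_\infty(u)}$, so its transform coincides with that of $\pi$.

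For part~ii) I would first reduce to $p=2$: Jensen's inequality applied to the quadratically optimal coupling gives $W_p\le W_2$ for $p\in[1,2]$, so it suffices to bound $W_2(p_t(x,\cdot),\pi)$. Writing $I_t$ for the process started at $0$, the factorisation $\E^{-\langle x,\psi(t,u)\rangle}\cdot\E^{-\phi(t,u)}$ of the transform exhibits $X_t^x\overset{d}{=}Z_t^x+I_t$ as an independent sum, where the branching part $Z_t^x$ has Laplace transform $\E^{-\langle x,\psi(t,u)\rangle}$ and $I_t$ has transform $\E^{-\phi(t,u)}$; since $\pi$ is invariant, the same decomposition applied to a $\pi$-distributed start $\eta$ gives $\pi\overset{d}{=}Z_t^\eta+I_t$. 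Realising both on one probability space with a shared $I_t$ furnishes a coupling with $W_2^2(p_t(x,\cdot),\pi)\le\EX{\norm{Z_t^x-Z_t^\eta}^2}\le 2\,\EX{\norm{Z_t^x}^2}+2\,\EX{\norm{Z_t^\eta}^2}$. Differentiating $\E^{-\langle x,\psi(t,u)\rangle}$ twice at $u=0$ expresses $\EX{\norm{Z_t^x}^2}$ through $\psi$ and its first two $u$-derivatives, all of which decay exponentially together with $\norm{\psi}$, giving $\EX{\norm{Z_t^x}^2}\le C(\norm{x}+\norm{x}^2)\E^{-\omega t}$; integrating the same bound against $\pi$ (which has finite second moment by the limiting moment formulas obtained from $\phi_\infty$) yields the asserted exponential rate. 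The constraint $p\le 2$ is exactly what keeps these moments finite under the standing second-moment assumption.

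Finally, for part~iii) the stationary process is produced by the Kolmogorov extension theorem on the Polish space $\cHplus$: with $\pi$ as initial law and the kernels $(p_t)_{t\ge0}$ from \cref{thm:CKK20}, the finite-dimensional distributions $\pi(\D x_0)\,p_{t_1}(x_0,\D x_1)\cdots p_{t_n-t_{n-1}}(x_{n-1},\D x_n)$ are consistent by Chapman–Kolmogorov and all one-dimensional marginals equal $\int_{\cHplus}\pi(\D x)\,p_t(x,\cdot)=\pi$ by the invariance from~i); the extension gives a Markov process $(X_t^\pi)_{t\ge0}$ with the prescribed transition kernels and $X_t^\pi\sim\pi$ for every $t$, a c\`adl\`ag version being inherited from the path regularity in \cref{thm:CKK20}. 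I expect the main obstacle to sit in part~ii): extracting the exponential decay of $\psi$ and of its $u$-derivatives from the infinite-dimensional Riccati equation, and confirming the finiteness of the second moment of $\pi$, is where the explicit rate is genuinely earned; the Laplace-continuity argument promoting $\E^{-\phi_\infty}$ to a probability measure on the infinite-dimensional cone in~i) is the secondary technical point.
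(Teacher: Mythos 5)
The decisive step of your part~i) is a genuine gap, and it is exactly the obstruction this paper is built to circumvent. You propose to promote the pointwise limit $u\mapsto\E^{-\phi_\infty(u)}$, using only its continuity at $u=0$ and $\phi_\infty(0)=0$, to the Laplace transform of a probability measure via ``a continuity theorem for Laplace transforms on the cone.'' No such theorem holds on an infinite-dimensional Hilbert space: pointwise convergence of transforms plus norm-continuity at the origin does \emph{not} produce a limit measure. By Minlos--Sazonov-type results one needs continuity in the Sazonov topology, or equivalently an additional tightness condition on the family $(p_t(x,\cdot))_{t\geq 0}$ --- recall that $u\mapsto\E^{-\norm{u}^{2}}$ is norm-continuous at $0$ yet is the characteristic functional of no measure, since the identity is not trace class. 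The paper flags this explicitly in the introduction and replaces the continuity theorem by the generalized Feller machinery: a uniform operator bound $\sup_{t\geq 0}\norm{P_t}_{\cL(\Bsq)}<\infty$ (Lemma~\ref{lem:uniform-growth-bound-semigroup}, itself resting on the explicit second-moment formulas of Proposition~\ref{prop:CKK20}), convergence $P_t f\to\pi(f)$ in $\Bsq$ on the dense span of exponentials (Proposition~\ref{prop:convergence-semigroup}), and then a Riesz representation theorem on the weighted space $\cHplus_{\mathrm{w}}$ identifying the limit functional with an inner-regular probability measure (Lemma~\ref{lem:invariant-measure-functional}). Your proposal contains no substitute for this: the uniform second-moment bounds you derive in part~ii) are of the right kind to yield tightness in the weak topology, but you invoke them only \emph{after} assuming $\pi$ exists, so the existence argument is circular as written. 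A second, smaller gap sits in your decay estimate for $\psi$: variation of constants plus Gronwall with a quadratic nonlinearity gives exponential decay only for small $\norm{u}$, and ``then for all $u$ by the semiflow property'' is again circular --- you would need to know $\psi(t,u)$ eventually enters the small ball, which is the decay you are trying to prove. The paper avoids this entirely (Lemma~\ref{lem:norm-bound-psi}): Jensen's inequality applied to the auxiliary affine process with parameter set $(0,B,0,\mu)$ gives the global bound $\norm{\psi(t,u)}\leq\norm{\E^{t\hat{B}}}_{\cL(\cH)}\norm{u}$ for all $u\in\cHplus$, with no smallness restriction. Note also that subcriticality, and the linearization of $R$ at the origin, concern the effective drift $\hat{B}$ of~\eqref{eq:b-B-check}, which includes the large-jump part of $\mu$, not the bare operator $B$.

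Your parts~ii) and~iii), by contrast, track the paper closely. The independent decomposition $X_t^{x}\overset{d}{=}Z_t^{x}+I_t$ is precisely the convolution identity $p_t(x,\cdot)=p_t(0,\cdot)*q_t(x,\cdot)$ of Lemma~\ref{lem:convolution-homogeneous}, and your shared-$I_t$ coupling is the combination of Lemma~\ref{lem:Wasserstein-convolution} with the convexity of $W_p$; the paper obtains the bound $\int\norm{\tilde{x}}^{2}q_t(z,\D\tilde{x})\leq 2M^{2}\E^{-2\delta t}\norm{z}^{2}+\tfrac{M^{3}}{\delta}\norm{\mu(\cHpluso)}\E^{-\delta t}\norm{z}$ from the explicit moment formulas rather than by differentiating the transform, which is equivalent in substance and matches your claimed rate structure. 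For~iii), your appeal to the classical Kolmogorov extension theorem on the Polish space $\cHplus$ is legitimate and arguably more elementary than the paper's route, which uses the weighted-space version (Proposition~\ref{prop:initial-distribution}, after \cite{CT20}) and for that reason needs the inner-regularity of $\pi$ in the weak topology established in part~i); your final claim of a c\`adl\`ag modification, however, is unsupported --- Theorem~\ref{thm:CKK20} asserts no path regularity --- though the statement does not require it.
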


The long-time behavior of affine processes
on the finite dimensional state spaces $(\MR_{+})^{d}\times\MR^{n}$ and $\MS_{d}^{+}$ for $d,n\in\MN$,
is now mostly well-understood. More precisely, based on the representation by strong solutions of stochastic differential equations, ergodicity was studied in \cite{FJR20} for different Wasserstein distances. By using regularity of transition densities with respect to the Lebesgue measure combined with the Meyn-and-Tweedie stability theory the ergodicity in total variation distances has been studied in \cite{BDLP14, FJ20, JKR19, JKR17, MSV20, FJKR22}. Finally, coupling techniques for affine processes are studied in \cite{Wan12, LM15}. 
Unfortunately, these methods implicitly use the dimension of the state-space and hence do not allow for an immediate extension to infinite dimensional settings. Indeed, for general affine processes on $\mathcal{H}^+$ there does not exist, so far, a pathwise construction. The absence of an infinite-dimensional Lebesgue measure prevents us to effectively use the Meyn-and-Tweedie stability theory (in terms of estimates on the density). Although there exist some extensions of the coupling techniques to infinite dimensional state-spaces (see \cite{Li21} for measure-valued branching processes), these methods seem to be closely related to the measure-valued structure of the process and hence not suitable for our Hilbert space framework.

The most promising method to study the long-time behavior for affine processes in infinite-dimensional settings is therefore based on the convergence of Fourier-Laplace transforms. The latter one requires, in view of the affine-transform formula \eqref{eq:affine-transform-intro}, to study the long-time behavior of the solutions to the generalized Riccati equations $\phi$ and $\psi$. For finite-dimensional state spaces, these ideas have been developed in \cite{GKK10, Kel11, KRM12, JKR20, PS09, FJKR20}. In these works, the existence of an invariant distribution (as well as weak convergence of transition probabilities) is obtained from L\'evys continuity theorem. More recently, such techniques also have been applied in \cite{Fri22} to Dawson-Watanabe superprocesses with immigration which form a specific class of affine processes on the state space of (possibly tempered) measures (so-called measure-valued Markov processes). 

Unfortunately, infinite-dimensional analogues of L\'evys continuity theorem on Hilbert spaces require an additional tightness condition on the transition probabilities (to obtain the existence of a limit distribution and hence invariant measure). For Ornstein-Uhlenbeck processes on Hilbert spaces, this problem can be avoided by taking advantage of their infinite divisibility,
see~\cite{CM87}. Note that such processes form a subclass of affine processes, see also Example~\ref{ex:OU} below. Apart from this, the long-time behavior of affine processes in infinite-dimensions has not been investigated in a systematic way. Our work therefore provides a first general treatment of long-time behavior for affine processes on infinite-dimensional Hilbert spaces as a state space. 

Our methodology for the proof of our main result builds on the ideas taken from ~\cite{FJKR20}, where the long-time behavior of affine processes on $\MS_{d}^{+}$ was studied. Namely, we show that for subcritical affine processes the limits $\lim_{t\to\infty}\phi(t,u)$ and $\lim_{t\to\infty}\psi(t,u)$ exist for every $u\in\cHplus$ and hence the Fourier-Laplace transform of the process (see \eqref{eq:affine-transform-intro}) converges when $t \to \infty$.
To overcome the difficulty related to the absence of a full analogue of Le\'vy continuity theorem, we utilize the 
generalized Feller semigroup approach for the process (see Appendix~\ref{sec:gener-fell-semigr} for a definition). More precisely, we provide uniform bounds on the operator norm of the transition semigroup which allows us
to prove that $\lim_{t \to \infty}P_t f =: \ell(f)$ has a limit for a sufficiently large class of functions $f$. By showing that the limit $\ell$ is a continuous linear functional, we can apply a variant of Riesz representation theorem for generalized Feller semigroups to show that $\ell$ has representation $\ell(f) = \int_{\mathcal{H}^+}f(y)\pi(\D y)$. The measure $\pi$ is the desired unique invariant probability measure. As a byproduct we also obtain weak convergence of transition probabilities in the weak topology on $\mathcal{H}^+$. In the second step we strengthen this convergence by proving estimates on the Wasserstein distance of order $p\in [1,2]$ of the transition probabilities to the invariant measure.
In contrast to the finite-dimensional results in \cite{FJR20, FJKR20}, our new bounds are dimension-free and explicit. As a consequence, we conclude that the transition probabilities converge weakly to the invariant measure in the norm topology on $\mathcal{H}^+$. Finally, we show that the invariant measure has finite second moments and compute them explicitly. 

\subsection{Applications and Examples}

Our main motivation for studying the long-time behavior of affine processes on positive Hilbert-Schmidt operators comes from stochastic covariance modeling in Hilbert spaces, see \cite{BRS18, benth2021barndorff, BS18, cox2021infinitedimensional}. Generally speaking, a stochastic covariance model in an Hilbert space
consist of two processes: one is a Hilbert space valued process that models some stochastic dynamics, e.g. forward curve dynamics in fixed income or commodity
markets formulated in the HJMM framework. The second process models the instantaneous covariance process of the former and takes values in some space of positive and self-adjoint operators, e.g. the cone of positive and self-adjoint Hilbert-Schmidt operators. An affine stochastic covariance model is a stochastic covariance model such that the mixed Fourier-Laplace transform of the joint process satisfies an affine transform formula. 

The affine stochastic covariance model proposed in~\cite{cox2021infinitedimensional} consists of a "price process" determined by a linear SDE in a Hilbert space driven by a Hilbert-valued Brownian motion which is modulated by the square-root of an affine process on positive Hilbert-Schmidt
operators as introduced in~\cite{cox2020affine}. Inspired by the univariate case in \cite{Kel11} we introduce the corresponding affine stochastic covariance model in the \emph{stationary covariance regime}. Namely, we replace the affine instantaneous
covariance process by the stationary affine process with the same transition
probabilities, the existence of which is guaranteed by Corollary~\ref{cor:stationary}. 
In Proposition~\ref{prop:affine-formula-stationary} we show that the affine stochastic covariance model in the stationary covariance regime satisfies an affine transform formula, which makes it a very tractable model for, e.g. pricing options written on forwards, see Section~\ref{sec:model-forw-curve} below. As an example we derive the characteristic function of
the operator valued Barndorff--Nielsen-Shepard (BNS) model established
in~\cite{BRS18} in the stationary covariance regime. This complements the
literature on operator valued BNS type models by their long-time behavior. This was already studied for the matrix valued case in~\cite{BNS07, PS09}.\par{} 

Lastly, we present a geometric affine stochastic covariance model for commodity forward
curve dynamics and consider a class of forward-start options written on forwards. We then study the implied forward volatility smile in the geometric affine stochastic covariance model and show in Proposition~\ref{prop:implied-vol-forward-start}, that it converges to the implied spot volatility of a European call option written on the forward, but this time modeled in the stationary covariance regime. This extends a result in \cite[Proposition 5.2]{Kel11} for forward-start options on (univariate) affine stochastic volatility (SV) models to an infinite-dimensional setting. 
\subsection{Layout of the article}
In Section~\ref{sec:prel-affine-proc} we introduce the class of
affine processes on $\cHplus$ established in~\cite{cox2020affine} and
recall some preliminary results. Subsequently, in
Section~\ref{sec:main-results} we present and discuss our main results in
full detail. Afterwards, in Section~\ref{sec:applications}, we discuss
applications of our results in the context of affine stochastic covariance
models in Hilbert spaces. Finally, the proofs are contained in Section~\ref{sec:proof-main-results},
which is subdivided into several subsections:
first we consider the long-time behavior of the solutions of the generalized
Riccati equations in Section~\ref{sec:gener-ricc-equat}, then prove the
existence of a unique invariant measure in
Section~\ref{sec:invar-distr-affine}, derive the convergence rates in
Section~\ref{sec:conv-rates-wass}, show existence of stationary affine
processes in Section~\ref{sec:proof-coroll-refc} and lastly prove the moment
formulas of the invariant measure in Section~\ref{sec:proof-prop-refpr}.  For the readers convenience we
added some background information on generalized Feller semigroups in
Appendix~\ref{sec:gener-fell-semigr}, where in particular we give a version of
Kolmogorov's extension theorem tailored to our needs. In
Appendix~\ref{sec:some-prop-wass-1} we give a convolution property for the
Wasserstein distance of order $2$. 

\section{Preliminaries: Affine processes on \texorpdfstring{$\cHplus$}{the
    cone of positive self-adjoint Hilbert-Schmidt
    operators}}\label{sec:prel-affine-proc}
    
We set $\MN_{0}=\set{0,1,2,\ldots}$ and $\MN=\set{1,2,\ldots}$. For a complex number $z\in\MC$ we denote its real part by $\Re(z)$ and its
imaginary part by $\Im(z)$. For a vector space $X$ and a subset $U\subseteq X$ we denote the linear
span of $U$ in $X$ by $\lin(U)$. For $(X,\tau)$ a topological vector space and
a subset $S\subseteq X$ we denote the \emph{Borel-$\sigma$-algebra} generated by the
relative topology on $S$ by $\cB(S)$. We write $C_{b}(S)$ 
for the space of real-valued bounded functions on $S$ that are continuous with respect to
the relative topology. $C_{b}(S)$ is a Banach space when endowed with the supremum
norm $\| \cdot \|_{C(S)}$. For a Banach space $X$ with norm $\norm{\cdot}_{X}$ we denote by $\cL(X)$ the
space of all \emph{bounded linear operators} on $X$, which becomes a Banach space when equipped with the operator norm $\| \cdot \|_{\cL(X)}$.\par{} 

Throughout this article we let $(H, \langle \cdot,\cdot\rangle_H)$ be a
separable real Hilbert space. We denote by $\cH$ the set of all self-adjoint
\emph{Hilbert-Schmidt operators} from $H$ to $H$. This is a Hilbert space when
endowed with the \textit{trace inner} product $\langle A, B \rangle =
\sum_{n=1}^{\infty} \langle A f_n, B f_n \rangle_H$ for $A,B\in\cH$ and where $(f_n)_{n\in \MN}$ is an orthonormal basis for $H$. Note that $\langle \cdot, \cdot \rangle$ is
independent of the choice of the orthonormal basis (see, e.g.,~\cite[Section
VI.6]{Wer00}). We denote by $\| \cdot \|$ the norm on $\cH$ induced by
$\langle \cdot, \cdot \rangle$.  In addition, we define by $\cH^+$ the set of all positive operators in $\cH$ i.e. $\cH^{+} \df \{ A \in \cH \colon \langle
Ah, h\rangle_H \geq 0 \text{ for all } h\in H \}$. Note that $\cHplus$ is
a closed subset of $\cH$. Moreover, it is a convex cone in $\cH$
i.e. $\cHplus+\cHplus\subseteq \cHplus$, $\lambda\cHplus \subseteq \cHplus$
for all $\lambda\geq 0$ and $\cHplus\cap(-\cHplus) = \{ 0\}$.
The cone $\cHplus$ induces a partial ordering $\leq_{\cHplus}$ on $\cH$, which
is defined by $x\leq_{\cHplus} y$ whenever $y-x\in \cHplus$. The cone
$\cHplus$ is also \emph{generating} for $\cH$ i.e. $\cH=\cHplus -\cHplus$.
For a Hilbert space $(V,\langle \cdot,\cdot\rangle_{V})$, in this article
either $(H, \langle \cdot,\cdot\rangle_H)$ or $(\cH,\langle
\cdot,\cdot\rangle)$, we denote the \emph{adjoint} of $A\in\cL(V)$ by
$A^{*}$. For two elements $x$ and $y$ in $V$ we define the operator $x\otimes
y\in \cL(V)$ by $x\otimes y(h)=\langle x,h \rangle_{V} y$ for every $h\in V$
and write $x^{\otimes 2}\df x\otimes x$.  

\subsection{Admissible parameters and moment conditions}\label{sec:admiss-param-moment}
Let $\chi\colon \cH\to \cH$ be given by $\chi(\xi)=\xi\one_{\norm{\xi}\leq
  1}(\xi)$. The following definition of an \textit{admissible parameter set}
stems from \cite[Definition 2.3]{cox2020affine}.     
\begin{definition}\label{def:admissibility}
  An \emph{admissible parameter set} $(b,B,m,\mu)$ consists of
  \begin{defenum}
  \item\label{item:m-2moment} a measure
    $m\colon\cB(\cHpluso)\to [0,\infty]$ such that
    \begin{enumerate}
    \item[(a)] $\int_{\cHpluso} \| \xi \|^2 \,\dm < \infty$ and
    \item[(b)] $\int_{\cHpluso}|\langle \chi(\xi),h\rangle|\,\dm<\infty$ for all $h\in\cH$  
      and there exists an element $I_{m}\in \cH$ such that $\langle
      I_{m},h\rangle=\int_{\cHpluso}\langle \chi(\xi),h\rangle\, m(\D\xi)$ for every $h\in\cH$; 
    \end{enumerate}   
  \item\label{item:drift} a vector $b\in\cH$ such that
    \begin{align*}
      \langle b, v\rangle - \int_{\cHpluso} \langle
      \chi(\xi), v\rangle \,m(\D\xi) \geq 0\, \quad\text{for all}\;v\in\cHplus;
    \end{align*}    
  \item\label{item:affine-kernel} a $\cH^{+}$-valued measure\footnote{See
      \cite{BDS55} and \cite{Lew70} for the notion of a vector valued measure
      and integration theory with respect to these.}
    $\mu \colon \mathcal{B}(\cHpluso) \rightarrow \cH^+$ such that the kernel
    $M(x,\D\xi)$, for every $x\in\cHplus$ defined on $\mathcal{B}(\cHpluso)$ by
    \begin{align}\label{eq:affine-kernel-M}
      M(x,\D\xi)\df \frac{\langle x, \mu(\D\xi)\rangle }{\norm{\xi}^{2}},
    \end{align}
    satisfies, for all $u,x\in \cH^{+}$ such that $\langle u,x \rangle = 0$,
    \begin{align}\label{eq:affine-kernel-quasi-mono}
      \int_{\cH^+\setminus \{0\}} \langle \chi(\xi), u\rangle M(x,\D\xi)< \infty; 
    \end{align}
  \item\label{item:linear-operator} an operator $B\in \mathcal{L}(\mathcal{H})$ 
    with adjoint $B^{*}$ satisfying
    \begin{align*}
      \left\langle B^{*}(u) , x \right\rangle 
      - 
      \int_{\cHpluso}
      \langle \chi(\xi),u\rangle 
      \frac{\langle \dmu, x \rangle}{\| \xi\|^2 }
      \geq 0,
    \end{align*}
    for all $x,u \in \cHplus$ with $\langle u,x\rangle=0$. 
  \end{defenum}
\end{definition}
 \begin{remark}\label{rem:admissible-effective}
  \begin{enumerate}
  \item[1)] Part (a) of \cref{item:m-2moment} yields
    $\int_{\cHplus\cap\{\norm{\xi}>1\}}\norm{\xi}\,m(\D\xi)\leq
      \int_{\cHplus\cap\{\norm{\xi}>1\}}\norm{\xi}^{2}\,m(\D\xi)<\infty$. Hence the integral $\int_{\cHplus\cap\{\norm{\xi}>1\}}\xi\,m(\D\xi)$ is
    well-defined in the Bochner sense.
  \item[2)] Similarly, the map
    $u\mapsto\int_{\cHplus\cap\set{\norm{\xi}>1}}\langle \xi,u\rangle
    \frac{\mu(\D\xi)}{\norm{\xi}^{2}}$ is a bounded linear operator on
    $\cH$. Indeed, for $v\in\cH$ such that $v=v^{+}-v^{-}$ for $v^{+},v^{-}\in\cHplus$
    we write 
    $|\langle\mu(\D\xi),v\rangle|\df\langle\mu(\D\xi),v^{+}\rangle+\langle\mu(\D\xi),v^{-}\rangle$
    and see that $|\langle\mu(\D\xi),v\rangle|$ is a positive measure for all
    $v\in\cH$. We thus have:
    \begin{align*}
     \Big\langle\int_{\cHplus\cap\set{\norm{\xi}>1}}\langle \xi,u\rangle\frac{
      \mu(\D\xi)}{\norm{\xi}^{2}},v\Big\rangle&\leq\norm{u}\Big(
     \int_{\cHplus\cap\set{\norm{\xi}>1}}\norm{\xi}^{-1}|\langle{\mu(\D\xi)},v\rangle|\Big)\\
                                        &\leq \norm{u} |\big\langle\mu(\cHplus\cap\set{\norm{\xi}>1}),v\big\rangle|,  
    \end{align*}
    taking the supremum over all $v\in\cH$ with $\norm{v}=1$ on both sides
    proves the boundedness of the map. Note
    that $\norm{\mu(\cHplus\cap\set{\norm{\xi}>1})}<\infty$, since by
    \cref{item:affine-kernel} for every $A\in\cB(\cHpluso)$ we have
    $\mu(A)\in\cHplus$ and hence $\norm{\mu(A)}<\infty$. 
  \end{enumerate}
\end{remark}

Let $u\in\cH$. We define the constant and linear \textit{effective drift} terms
$\hat{b}$ and $\hat{B}$ as follows:   
\begin{align}\label{eq:b-B-check}
  \hat{b}\df b+\int_{\cHplus\cap\{\norm{\xi}>1\}}\xi\,m(\D\xi) \quad\text{and}\quad \hat{B}(u)\df B^{*}(u)+\int_{\cHplus\cap\set{\norm{\xi}>1}}\langle
  \xi,u\rangle \frac{\mu(\D\xi)}{\norm{\xi}^{2}}.  
\end{align} 
By the preceding Remark~\ref{rem:admissible-effective} we see that
$\hat{b}\in\cH$ and $\hat{B}\in\cL(\cH)$ are well-defined.\par{}
Now, we recall the main result in \cite[Theorem 2.8]{cox2020affine} which
ensures the existence of a broad class of affine processes on $\cHplus$ associated with the admissible parameter set $(b,B,m,\mu)$ satisfying the
conditions in Definition~\ref{def:admissibility}.  
\begin{theorem}\label{thm:CKK20}
Let $(b, B, m, \mu)$ be an admissible parameter set according to
Definition~\ref{def:admissibility}. Then there exist a conservative time-homogeneous $\mathcal{H}^+$-valued Markov process $X$ with transition kernels $(p_{t}(x,\cdot))_{t\geq 0}$, and constants $K,\omega \in [1,\infty)$ such that  
\begin{align}\label{eq:exp_bound}
\int_{\cHplus} \| \xi \|^2\,p_{t}(x,\D\xi)\leq K e^{\omega t} (\norm{x}^{2}+1),
\end{align}
and for all $t\geq 0$ and $u,x\in\cHplus$ we have
\begin{align}\label{eq:affine-transform-formula}
  \int_{\cHplus}\E^{-\langle \xi, u\rangle}\,p_{t}(x,\D\xi)=\E^{-\phi(t,u)-\langle x,\psi(t,u)\rangle},
\end{align}
where $\phi(\cdot,u)$ and $\psi(\cdot,u)$ are the unique solutions to the following \emph{generalized Riccati equations}
\begin{subequations}\
  \begin{empheq}[left=\empheqlbrace]{align}
    \,\frac{\partial \phi(t,u)}{\partial t}&=F(\psi(t,u)),\quad t>0,\quad \phi(0,u)=0,\label{eq:Riccati-phi}
    \\
    \,\frac{\partial \psi(t,u)}{\partial t}&=R(\psi(t,u)),\quad t>0,\quad \psi(0,u)=u,\label{eq:Riccati-psi}
  \end{empheq}
\end{subequations}
where $F\colon\cHplus\to \MR$ and $R\colon\cHplus\to\cH$ are given by
\begin{subequations}
\begin{align}
  F(u)&= \langle b,u\rangle-\int_{\cHpluso}\big(\E^{-\langle
  \xi,u\rangle}-1+\langle \chi(\xi) ,u\rangle\big)\,\dm, \label{eq:F}\\
  R(u)&= B^{*}(u)-\int_{\cHpluso}\big(\E^{-\langle
  \xi,u\rangle}-1+\langle \chi(\xi) ,
  u\rangle\big)\,\frac{\dmu}{\norm{\xi}^{2}}\,.\label{eq:R}
\end{align}
\end{subequations}
\end{theorem}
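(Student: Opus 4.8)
The plan is to recover the process from the solutions $(\phi,\psi)$ of the generalized Riccati equations rather than the reverse, following the finite-dimensional template for affine processes on $\MS_{d}^{+}$ but carried out intrinsically in the Banach space $\cH$ through the generalized Feller semigroup machinery of Appendix~\ref{sec:gener-fell-semigr}. First I would establish global well-posedness of \eqref{eq:Riccati-phi}--\eqref{eq:Riccati-psi}, namely that for every $u\in\cHplus$ there is a unique global solution with $\phi(\cdot,u)\in\MRplus$ and $\psi(\cdot,u)\in\cHplus$. Part (a) of \cref{item:m-2moment} makes the integrals in \eqref{eq:F}--\eqref{eq:R} finite and renders $F$ and $R$ locally Lipschitz on $\cHplus$, so Picard--Lindel\"of in $\cH$ yields a unique maximal solution. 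The decisive structural point is that the flow of $R$ leaves the cone invariant, which I would verify through the Nagumo sub-tangentiality criterion: whenever $u,x\in\cHplus$ satisfy $\langle u,x\rangle=0$, one has $\langle R(u),x\rangle\ge 0$. This inequality is precisely \cref{item:linear-operator} combined with the positivity of the $\cHplus$-valued measure $\mu$ and the elementary bound $1-\E^{-\langle\xi,u\rangle}\ge 0$ for $u,\xi\in\cHplus$; the remaining conditions \cref{item:drift} and \eqref{eq:affine-kernel-quasi-mono} are what keep the \emph{generator}, and hence the transition kernels, positivity preserving. Global existence holds because $0\le 1-\E^{-\langle\xi,u\rangle}\le\min\{1,\langle\xi,u\rangle\}$ forces the linear growth bound $\norm{R(u)}\le C(1+\norm{u})$, so Gr\"onwall's inequality excludes finite-time blow-up, while $\phi(t,u)=\int_0^t F(\psi(r,u))\,\D r$ stays finite and nonnegative.

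The step I expect to be the main obstacle is turning the candidate function $x\mapsto\E^{-\phi(t,u)-\langle x,\psi(t,u)\rangle}$ into an honest family of transition kernels, since in infinite dimensions the naive Bochner--Minlos route fails without an extra tightness input and no pathwise SDE representation is available. I would instead build the Markov semigroup directly on the weighted generalized Feller space $\Bsq$ with weight $\rho(x)=1+\norm{x}^2$. Writing $f_u(x)=\E^{-\langle x,u\rangle}$, I set $P_tf_u(x)\df\E^{-\phi(t,u)-\langle x,\psi(t,u)\rangle}$; the flow property $\psi(t+s,u)=\psi(t,\psi(s,u))$ and the cocycle identity $\phi(t+s,u)=\phi(s,u)+\phi(t,\psi(s,u))$, both read off from the ODEs, yield the semigroup law $P_{t+s}=P_tP_s$ on the linear span of such exponentials. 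On this point-separating, dense subalgebra one checks positivity, the normalization $P_t\one=\one$ coming from $\phi(t,0)=0$ and $\psi(t,0)=0$, strong continuity from the continuity of $t\mapsto(\phi(t,u),\psi(t,u))$ at the origin, and the crucial uniform estimate $\norm{P_t}_{\cL(\Bsq)}\le K\E^{\omega t}$ with growth rate $\omega$ governed by $\norm{\hat{B}}_{\cL(\cH)}$. The generalized Feller extension theorem of Appendix~\ref{sec:gener-fell-semigr} then produces a strongly continuous positive semigroup on $\Bsq$, its associated Riesz representation theorem delivers transition kernels $p_t(x,\cdot)$ on $\cHplus$ with $P_tg(x)=\int_{\cHplus}g(\xi)\,p_t(x,\D\xi)$ for all $g\in\Bsq$, and the version of Kolmogorov's extension theorem stated there realizes $(p_t(x,\cdot))_{t\ge 0}$ as a conservative, time-homogeneous $\cHplus$-valued Markov process $X$.

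Finally, the affine transform formula \eqref{eq:affine-transform-formula} is immediate, being the identity $\int_{\cHplus}f_u(\xi)\,p_t(x,\D\xi)=P_tf_u(x)=\E^{-\phi(t,u)-\langle x,\psi(t,u)\rangle}$, while the second-moment bound \eqref{eq:exp_bound} follows by applying the weighted-norm estimate to $\rho$ itself: since $\norm{\rho}_{\Bsq}=1$ and $P_t\one=\one$, one obtains $1+\int_{\cHplus}\norm{\xi}^2\,p_t(x,\D\xi)=P_t\rho(x)\le K\E^{\omega t}\rho(x)=K\E^{\omega t}(\norm{x}^2+1)$, so the constants $K,\omega\in[1,\infty)$ are inherited directly from the semigroup estimate. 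I expect the cone-invariance of $\psi$ and the verification of this generalized Feller growth bound to be the two points where the admissibility hypotheses and the second-moment assumption are genuinely used.
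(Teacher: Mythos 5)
The paper does not prove Theorem~\ref{thm:CKK20} itself: it is imported verbatim from \cite[Theorem 2.8]{cox2020affine}, and your outline (global well-posedness of the Riccati equations with cone invariance via quasi-monotonicity, a semigroup defined on the span of exponentials in the generalized Feller space $\Bsq$, Riesz representation, and the Kolmogorov extension of \cite[Theorem 2.11]{CT20}) does match the architecture of that construction. However, two steps in your sketch are genuinely gapped, one of them fatally as written.

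The decisive gap is positivity. Having set $P_{t}f_{u}(x)\df\E^{-\phi(t,u)-\langle x,\psi(t,u)\rangle}$ on $\cD=\lin\set{f_{u}\colon u\in\cHplus}$, you say ``one checks positivity'' before invoking Riesz representation. But positivity of this functional on $\cD$ (for fixed $t,x$) is \emph{equivalent} to $u\mapsto\E^{-\phi(t,u)-\langle x,\psi(t,u)\rangle}$ being the Laplace transform of a positive measure, which is precisely the existence statement being proved; nothing in the Riccati equations hands it to you. In finite dimensions this is settled by the positive maximum principle for the generator or by explicit approximating processes; in \cite{cox2020affine} it is obtained by approximating $(m,\mu)$ by finite-activity jump measures, constructing the corresponding semigroups (where positivity is manifest, essentially a compound-Poisson-type construction), proving convergence of the associated Riccati solutions, and passing positivity to the limit via the generalized Feller approximation machinery. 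Without such an approximation scheme (or an infinite-dimensional complete-monotonicity argument, which would itself require the tightness input you correctly flag as unavailable), the Riesz step cannot deliver positive kernels, and your argument is circular at exactly the point you identified as the main obstacle.

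The second gap is your growth bound for $R$. The claim $\norm{R(u)}\leq C(1+\norm{u})$ is false: the compensated small-jump part is genuinely quadratic, since near $\xi=0$ one must use $|\E^{-z}-1+z|\leq z^{2}/2$ (the linear bound $|\E^{-z}-1+z|\leq z$ produces the non-integrable singularity $\langle\xi,u\rangle/\norm{\xi}^{2}\sim\norm{u}/\norm{\xi}$ against $\mu$). The correct estimate is $\norm{R(u)}\leq(\norm{B}_{\cL(\cH)}+\norm{\mu(\cHpluso)})(\norm{u}+\norm{u}^{2})$, i.e.\ Lemma~\ref{lemma:RFcontinuous} of the present paper, so your Gr\"onwall argument does not exclude finite-time blow-up. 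Global existence is instead recovered from the cone structure: since $\E^{-z}-1+z\geq 0$ and $\mu$ is $\cHplus$-valued, one has $R(u)\leq_{\cHplus}\hat{B}(u)$, and a quasi-monotone comparison argument yields $\psi(t,u)\leq_{\cHplus}\E^{t\hat{B}}u$ (the cone being self-dual, hence normal, this is an a priori norm bound) --- note that Lemma~\ref{lem:norm-bound-psi} of the paper cannot be used here, as its proof presupposes the existence of the process. A smaller repairable point: $\rho\notin\Bsq$ (it fails \eqref{eq:function-growth}), so \eqref{eq:exp_bound} does not follow by applying the $\cL(\Bsq)$-norm bound ``to $\rho$ itself''; one must truncate, apply the bound to $\rho\wedge n\in\Bsq$, and use monotone convergence --- and in \cite{cox2020affine} the moment estimate is in any case established along the approximation, not read off at the end.
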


\vspace{5mm}

Define the transition semigroup $(P_t)_{t\geq 0}$ by
\[
 P_t f(x) = \int_{\mathcal{H}^+}f(\xi)p_t(x,\mathrm{d}\xi)
\]
for bounded measurable functions $f: \mathcal{H}^+ \longrightarrow \R$. Then $(P_t)_{t \geq 0}$ is a positive semigroup. Let $\rho(x) = 1 + \|x\|^2$ and define
\[
 \| f\|_{\rho} = \sup_{x \in \mathcal{H}^+} \frac{|f(x)|}{\rho(x)}.
\]
Denote by $B_{\rho}(\mathcal{H}^+)$ the Banach space of all measurable functions $f: \mathcal{H}^+ \longrightarrow \R$ for which $\|f \|_{\rho}$ is finite. Clearly $(P_t)_{t \geq 0}$ extends onto $B_{\rho}(\mathcal{H}^+)$ and satisfies for each $f \in B_{\rho}(\mathcal{H}^+)$
\begin{align}\label{eq: semigroup bound}
 |P_t f(x)| \leq \| f\|_{\rho}\int_{\mathcal{H}^+} \rho(y)p_t(x,\D y) \leq \| f\|_{\rho}(1+ K)e^{\omega t}\rho(x), \qquad x \in \mathcal{H}^+,
\end{align}
i.e. $(P_t)_{t \geq 0}$ leaves $B_{\rho}(\mathcal{H}^+)$ invariant. Let $\mathcal{H}^+_w$ be the space $\mathcal{H}^+$ equipped with the weak topology, and denote by $C_b(\mathcal{H}_w^+)$ the space of all bounded and weakly continuous functions $f: \mathcal{H}^+ \longrightarrow \R$. Finally let $\mathcal{B}_{\rho}(\mathcal{H}^+)$ be the closure of $C_b(\mathcal{H}_w^+)$ in $B_{\rho}(\mathcal{H}^+)$.
It follows from \cite{cox2020affine} that $(P_t)_{t \geq 0}$ leaves $\mathcal{B}_{\rho}(\mathcal{H}_w^+)$ invariant and satisfies $\lim\limits_{t\to 0+}P_{t}f(x)=f(x)$ for all $f\in \cB_{\rho}(\mathcal{H}_w^+)$ and $x\in \mathcal{H}_w^+$.
From \eqref{eq: semigroup bound} we then obtain
\[
 \| P_t \|_{\mathcal{L}(\mathcal{B}_{\rho}(\mathcal{H}_w^+))} \leq (1+K)e^{\omega t}, \qquad t \geq 0.
\]
Hence $(P_t)_{t \geq 0}$ is a \textit{generalized Feller semigroup} on $\mathcal{B}_{\rho}(\mathcal{H}_w^+)$ (see the Appendix~\ref{sec:gener-fell-semigr} for the definition and additional details).

\begin{remark}\label{rem:existence}
 Given the transition kernels $(p_{t}(x,\cdot))_{t\geq 0}$, the
 process $(X_{t})_{t\geq 0}$ with initial value $X_{0}=x\in\cHplus$ can be
 constructed by a version of Kolmogorov's extension theorem in
 \cite[Theorem 2.11]{CT20}. Indeed, for every $x\in\cHplus$ one can show
 the existence of a unique measure $\MP_{x}$ on $\Omega\df
 (\cH^{+})^{\MRplus}$, equipped with the $\sigma$-algebra generated by the canonical projections $X_{t}\colon \Omega \to \cH$, given by
 $X_{t}(\omega)=\omega(t)$ for $\omega\in\Omega$, are measurable. For
 $x\in\cHplus$ the probability measure $\MP_{x}$ is the distribution
 of $X$ with $\MP_{x}(X_{0}=x)=1$. We denote the expectation with
 respect to $\MP_{x}$ by $\EXspec{x}{\cdot}$.
\end{remark}

Let $(b,B,m,\mu)$ be an admissible parameter set according
to~Definition~\ref{def:admissibility} and let us denote by $(X_{t})_{t\geq 0}$
the associated affine process on $\cHplus$. 
Below we state explicit formulas for the first two moments of the process, see \cite[Proposition 4.7]{cox2020affine} for a proof.
\begin{proposition}\label{prop:CKK20}
  Let $(X_{t})_{t\geq 0}$ be the affine process associated with the admissible
  parameter set $(b,B,m,\mu)$. Then for all $v,w\in\cHplus$ the following
  formulas hold true: 
  \begin{align}\label{eq:explicit-Xv}
 \EXspec{x}{\langle X_{t},v\rangle}&=\int_{0}^{t}\langle \hat{b}, \E^{s\hat{B}}v\rangle\,\D s+\langle x, \E^{t\hat{B}}v\rangle 
  \end{align} and 
  \begin{align}\label{eq:explicit-Xv-square}
    \EXspec{x}{\langle X_{t},v\rangle\langle X_{t},w\rangle}&= \Big( 
                                                                 \int_0^t \langle\hat{b},\E^{s \hat{B}} v\rangle\,\D s 
                                                                 + 
                                                                 \big\langle x,
                                                                 \E^{t \hat{B}} v\big\rangle\Big)\Big( 
                                                                 \int_0^t \langle\hat{b},\E^{s \hat{B}} w\rangle\,\D s 
                                                                 + 
                                                                 \big\langle x,
                                                             \E^{t \hat{B}} w\big\rangle\Big)\nonumber\\
   & \quad+ \int_0^t\int_{\cHpluso}\langle \xi, \E^{s \hat{B}} v\rangle\langle \xi, \E^{s \hat{B}} w\rangle\,m(\D\xi)\,\D s
\nonumber \\ & \quad 
    +  \int_0^t \int_{0}^{s} 
        \Big\langle\hat{b},\E^{(s-u)\hat{B}}\int_{\cHpluso}\langle \xi,\E^{u \hat{B}}v\rangle\langle \xi,\E^{u \hat{B}}w\rangle\,\frac{\mu(\D\xi)}{\norm{\xi}^{2}}\Big\rangle\,\D u\, \D s
\nonumber\\ & \quad 
    + \int_{0}^{t}
        \Big\langle x,\E^{(t-s)\hat{B}}\int_{\cHpluso}\langle\xi,\E^{s
              \hat{B}}w\rangle\langle\xi,\E^{s
              \hat{B}}v\rangle\,\frac{\mu(\D\xi)}{\norm{\xi}^{2}}
              \Big\rangle\,\D s. 
\end{align}
\end{proposition}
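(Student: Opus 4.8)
The plan is to read off the moments by differentiating the affine transform formula \eqref{eq:affine-transform-formula} at the Laplace argument $u=0$. Fix $v,w\in\cHplus$ and set $g(\lambda,\nu)\df\int_{\cHplus}\E^{-\langle\xi,\lambda v+\nu w\rangle}\,p_t(x,\D\xi)$ for $\lambda,\nu\geq 0$. Since $\langle\xi,v\rangle,\langle\xi,w\rangle\geq 0$ on $\cHplus$ and the bound \eqref{eq:exp_bound} gives $\EXspec{x}{\|X_t\|^2}<\infty$, the random variables $\langle X_t,v\rangle$ and $\langle X_t,w\rangle$ have finite second moments, and dominated convergence lets me differentiate under the integral to get $-\partial_\lambda g(0,0)=\EXspec{x}{\langle X_t,v\rangle}$ and $\partial_\lambda\partial_\nu g(0,0)=\EXspec{x}{\langle X_t,v\rangle\langle X_t,w\rangle}$. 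By \eqref{eq:affine-transform-formula} the same $g$ equals $\E^{A(\lambda,\nu)}$ with $A(\lambda,\nu)=-\phi(t,\lambda v+\nu w)-\langle x,\psi(t,\lambda v+\nu w)\rangle$. Because $F(0)=R(0)=0$ we have $\phi(t,0)=0$, $\psi(t,0)=0$, hence $A(0,0)=0$, and the chain rule yields $\partial_\lambda\partial_\nu g(0,0)=\partial_\lambda\partial_\nu A(0,0)+\partial_\lambda A(0,0)\,\partial_\nu A(0,0)$. Everything thus reduces to the first and mixed-second directional derivatives of $\phi$ and $\psi$ at the origin.

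These derivatives come from differentiating the generalized Riccati equations \eqref{eq:Riccati-phi}--\eqref{eq:Riccati-psi}. Write $\dot\psi_v(t)$, $\ddot\psi_{vw}(t)$ for the first and mixed second derivatives of $(\lambda,\nu)\mapsto\psi(t,\lambda v+\nu w)$ at the origin, and similarly $\dot\phi_v,\ddot\phi_{vw}$. Differentiating $F$ and $R$ once at $0$ and using $\xi-\chi(\xi)=\xi\one_{\|\xi\|>1}$ — so the truncation cancels the linear part near $\xi=0$ and only the effective-drift terms \eqref{eq:b-B-check} survive — gives $DF(0)v=\langle\hat{b},v\rangle$ and $DR(0)=\hat{B}$. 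Differentiating \eqref{eq:Riccati-psi} once then produces the linear equation $\partial_t\dot\psi_v=\hat{B}\dot\psi_v$ with $\dot\psi_v(0)=v$, whence $\dot\psi_v(t)=\E^{t\hat{B}}v$; integrating the differentiated \eqref{eq:Riccati-phi} gives $\dot\phi_v(t)=\int_0^t\langle\hat{b},\E^{s\hat{B}}v\rangle\,\D s$. Since $-\partial_\lambda A(0,0)=\dot\phi_v(t)+\langle x,\dot\psi_v(t)\rangle$, this is exactly \eqref{eq:explicit-Xv}.

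For the second moment I differentiate once more. The second derivatives at $0$ are $D^2F(0)[a,b]=-\int_{\cHpluso}\langle\xi,a\rangle\langle\xi,b\rangle\,m(\D\xi)$ and $D^2R(0)[a,b]=-\int_{\cHpluso}\langle\xi,a\rangle\langle\xi,b\rangle\,\frac{\mu(\D\xi)}{\|\xi\|^2}$. Writing $g_{vw}(s)\df\int_{\cHpluso}\langle\xi,\E^{s\hat{B}}v\rangle\langle\xi,\E^{s\hat{B}}w\rangle\,\frac{\mu(\D\xi)}{\|\xi\|^2}$, differentiating \eqref{eq:Riccati-psi} twice and using $\ddot\psi_{vw}(0)=0$ gives $\partial_t\ddot\psi_{vw}=\hat{B}\ddot\psi_{vw}-g_{vw}(t)$, which variation of constants solves as $\ddot\psi_{vw}(t)=-\int_0^t\E^{(t-s)\hat{B}}g_{vw}(s)\,\D s$. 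Feeding this into the twice-differentiated \eqref{eq:Riccati-phi} and integrating yields $\ddot\phi_{vw}$. Substituting $-\ddot\phi_{vw}(t)-\langle x,\ddot\psi_{vw}(t)\rangle$ for $\partial_\lambda\partial_\nu A(0,0)$ and $\EXspec{x}{\langle X_t,v\rangle}\EXspec{x}{\langle X_t,w\rangle}$ for the cross term $\partial_\lambda A(0,0)\,\partial_\nu A(0,0)$, the resulting four contributions reproduce the four lines of \eqref{eq:explicit-Xv-square} verbatim (the $m$-integral term, the double-time $\hat{b}$-integral term, and the $\langle x,\cdot\rangle$ term arising from $-\langle x,\ddot\psi_{vw}(t)\rangle$).

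The hard part is rigor at the boundary: the argument $u=0$ lies on the boundary of $\cHplus$, so the twice-differentiability of $\phi(t,\cdot),\psi(t,\cdot)$ there, and the interchange of $\partial_\lambda,\partial_\nu$ with the $p_t(x,\cdot)$-integral as well as with the L\'evy-type integrals defining $F$ and $R$, must be justified from the admissibility conditions rather than assumed. Two observations make this work: first, the finite-second-moment condition on $m$ in Definition~\ref{def:admissibility} together with Remark~\ref{rem:admissible-effective} supplies integrable dominating functions (so $DF(0),D^2F(0),DR(0),D^2R(0)$ are well defined and the exchange is legitimate); second, near $\xi=0$ the first-order integrand vanishes because of the $\chi$-truncation, while the second-order integrand $\langle\xi,\cdot\rangle\langle\xi,\cdot\rangle/\|\xi\|^2$ remains bounded, so no singularity appears at the origin. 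Once these dominations are in place the one-sided derivatives into the cone exist, and one can additionally justify differentiating $\phi,\psi$ separately by noting that $g$ is $C^2$ near $(0,0)$ with $g(0,0)=1$, so $A=\log g$ is $C^2$; taking $x=0$ isolates $\phi$, and varying $x$ over the generating cone $\cHplus$ then recovers the derivatives of $\psi$.
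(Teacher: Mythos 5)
Your proposal is correct and takes essentially the same route as the paper: the paper gives no proof of its own but defers to \cite[Proposition 4.7]{cox2020affine}, where the moment formulas are likewise obtained by differentiating the affine transform formula \eqref{eq:affine-transform-formula} at $u=0$ along directions in the cone and solving the resulting first- and second-variation equations of the generalized Riccati system \eqref{eq:Riccati-phi}--\eqref{eq:Riccati-psi}, with the second-moment bound \eqref{eq:exp_bound} and the integrability conditions of Definition~\ref{def:admissibility} justifying the interchanges of differentiation and integration. Your handling of the delicate points --- one-sided differentiability at the boundary point $u=0$, the cancellation $\xi-\chi(\xi)=\xi\one_{\norm{\xi}>1}$ producing $\hat{b}$ and $\hat{B}$, the boundedness of $\langle\xi,\cdot\rangle\langle\xi,\cdot\rangle/\norm{\xi}^{2}$ against the finite measure $\mu$, and the separation of the $\phi$- and $\psi$-derivatives via $x=0$ and the generating property $\cH=\cHplus-\cHplus$ --- addresses precisely the technical issues the cited proof must also resolve, so no substantive gap remains.
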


\section{Main results}\label{sec:main-results}
Let $V_{\tau}\df(V,\tau)$ be a topological vector space and denote by $\cM(V_{\tau})$
the set of all probability measures defined on the Borel-$\sigma$-algebra
$\cB(V_{\tau})$. For the vector space $\cH$ equipped with its weak topology
$\tau_{\mathrm{w}}$ we write $\cH_{\mathrm{w}}=(\cH,\tau_{\mathrm{w}})$. Note
that the positive cone $\cHplus_{\mathrm{w}}$ is also closed in the weak
topology and moreover, the Borel-$\sigma$-algebras of the strong and
weak topology coincide, i.e. $\cB(\cHplus)=\cB(\cHplus_{\mathrm{w}})$. We say
that a measure $\nu\in\cM(\cHplus_{\tau})$ is \textit{inner-regular} (with
respect to the topology $\tau$), whenever  
\begin{align*}
 \nu(A)=\sup\set{\nu(K)\colon K\subseteq A,\, K\text{ is $\tau$-compact}}.
\end{align*}
For a sequence $(\nu_{n})_{n\in\MN}\subseteq \cM(\cHplus)$ we write
$\nu_{n}\Rightarrow \nu$ as $n\to\infty$ for the weak convergence of
$(\nu_{n})_{n\in\MN}$ to $\nu$ in the strong topology i.e. 
\begin{align*}
  \lim_{n\to\infty}\int_{\cHplus}f(\xi)\,\nu_{n}(\D\xi)=\int_{\cHplus}f(\xi)\,\nu(\D\xi)\quad\text{for
  all }f\in C_{b}(\cH). 
\end{align*}
For $\nu_{1}$, $\nu_{2}\in\cM(\cHplus)$ we call a probability measure $G$, defined on the product Borel-$\sigma$-algebra $\cB(\cHplus)\times\cB(\cHplus)$, a
\textit{coupling} of $(\nu_{1},\nu_{2})$, whenever its marginal distributions are given by $\nu_{1}$ and $\nu_2$, respectively. We denote the set of all possible couplings of $(\nu_{1},\nu_{2})$ by
$\cC(\nu_{1},\nu_{2})$. For $p\in [1,\infty)$ the
\textit{Wasserstein distance of order $p$} between $\nu_{1}\in\cM(\cHplus)$ and $\nu_{2}\in\cM(\cHplus)$ is defined as   
  \begin{align*}
   W_{p}(\nu_{1},\nu_{2})&=\left(\inf\set{\int_{\cHplus\times\cHplus}\norm{x-y}^{p}\,G(\D x,\D
                   y)\colon\,G\in\cC(\nu_{1},\nu_{2})}\right)^{1/p}.
  \end{align*}
 For an introduction to Wasserstein distances we refer to \cite[Section 6]{Vil09}.\par{}
  
  Now, let $(b,B,m,\mu)$ be an admissible parameter set and denote the spectrum of
  $\hat{B}$, the operator defined in~\eqref{eq:b-B-check}, by
  $\sigma(\hat{B})$. We introduce the following central assumption:
  \begin{assumption}{A}\label{assump:subcritical-pure-jump}
    The spectral bound $s(\hat{B})\df\sup\set{\Re(\lambda)\colon
      \lambda\in\sigma(\hat{B})}$ of $\hat{B}$ in~\eqref{eq:b-B-check} is
    strictly negative, i.e. $s(\hat{B})<0$.  
  \end{assumption}
  
  \vspace{3mm}
  
  We call an affine process $(X_{t})_{t\geq 0}$ on $\cHplus$ associated with an admissible
  parameter set $(b,B,m,\mu)$ satisfying
  Assumption~\ref{assump:subcritical-pure-jump} a \textit{subcritical} affine
    process on $\cHplus$. Recall that $\hat{B}$ is bounded and it generates
  the operator semigroup $(\E^{t\hat{B}})_{t\geq 0}$ given by
  $\E^{t\hat{B}}\df\sum_{n=0}^{\infty}\frac{(t\hat{B})^{n}}{n!}$,
where the convergence of the series is understood in the $\cL(\cH)$-norm. It
is well known that $(\E^{t\hat{B}})_{t\geq 0}$ is a uniformly continuous
semigroup, see \cite[Chapter I, Section 3]{EN00}, which
  implies that the spectral bound $s(\hat{B})$ coincides with the \emph{growth
    bound} of $(\E^{t\hat{B}})_{t\geq 0}$, see \cite[Corollary 4.2.4]{EN00}, i.e.
  \begin{align*}
  s(\hat{B})=\inf\set{w\in\MR\colon\,\exists M_{w}\geq
    1\,\text{ s.t. }\norm{\E^{t\hat{B}}}_{\cL(\cH)}\leq M_{w}\E^{w t}\;\forall
    t\geq 0}.  
  \end{align*}
  Therefore whenever Assumption~\ref{assump:subcritical-pure-jump} is satisfied, there
  exists a $M\geq 1$ and $\delta>0$ such that  
  \begin{align}\label{eq:stable-semigroup}
    \norm{\E^{t\hat{B}}}_{\cL(\cH)}\leq M \E^{-\delta t},  
  \end{align}
  in particular we could choose $\delta=-s(\hat{B})$. The following theorem is a detailed version of our main result concerning the long-time behavior of affine processes on the state-space $\cHplus$:
  \begin{theorem}\label{thm:convergence-Wasserstein}
  Let $(b,B,m,\mu)$ be an admissible parameter set such that
  Assumption~\ref{assump:subcritical-pure-jump} is satisfied. Denote the
  associated subcritical affine process on $\cHplus$ by 
  $(X_{t})_{t\geq 0}$ and its transition kernels by $(p_{t}(x,\cdot))_{t\geq
    0}$. Then the following holds true:
  \begin{theoremenum}
  \item\label{item:invariant-measure} There exists a unique invariant measure
    $\pi$ for $(p_{t}(x,\cdot))_{t\geq 0}$ and the Laplace transform of $\pi$ is given by 
    \begin{align}\label{eq:Laplace-invariant}
  \int_{\cHplus}\E^{-\langle u,x\rangle}\,\pi(\D
      x)=\exp\left(-\int_{0}^{\infty}F(\psi(s,u))\,\D s\right),\quad u\in\cHplus,  
    \end{align}
    where $F$ and $\psi(s,u)$ are as in \eqref{eq:F}
    and~\eqref{eq:Riccati-psi}. Moreover, $\pi$ is an inner-regular measure on
    $\cB(\cHplus_{\mathrm{w}})$.
  \item\label{item:convergence-rates} For $p\in
[1,2]$, $t \geq 0$ and $x\in\cHplus$ we have
\begin{align}
  W_{p}(p_{t}(x,\cdot),\pi)&\leq C_{1}\E^{-\delta
    t}\left(\norm{x}+\big(\int_{\cHplus}\norm{y}^{p}\,\pi(\D
  y)\big)^{1/p}\right)\label{eq:convergence-Wasserstein}\\
  &\quad+C_{2}\E^{-\delta/2
    t}\left(\norm{x}^{1/2}+\big(\int_{\cHplus}\norm{y}^{p/2}\,\pi(\D
    y)\big)^{1/p}\right),\label{eq:convergence-Wasserstein-2} 
\end{align}
where $C_{1}=2M$ and $C_{2}=2^{1/2}M^{3/2}\delta^{-1/2}\norm{\mu(\cHpluso)}^{1/2}$ for $M\geq 1$
and $\delta>0$ as in~\eqref{eq:stable-semigroup}. 
In particular, we have $p_{t}(x,\cdot)\Rightarrow\pi$ as $t\to\infty$.
  \end{theoremenum}  
\end{theorem}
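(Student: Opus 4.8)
The plan is to reduce the long-time behaviour to that of the solutions $\phi,\psi$ of the generalized Riccati equations, and then to promote the resulting pointwise convergence of Laplace transforms to convergence of measures by means of the generalized Feller machinery. For the existence part i) I would first analyse the Riccati flow. Differentiating $\E^{-\langle\xi,u\rangle}-1+\langle\chi(\xi),u\rangle$ at $u=0$ and comparing with \eqref{eq:b-B-check} shows that the linearization of $R$ from \eqref{eq:R} at the origin equals $\hat B$; hence Assumption~\ref{assump:subcritical-pure-jump} together with \eqref{eq:stable-semigroup} should give $\|\psi(t,u)\|\lesssim M\E^{-\delta t}\norm{u}$ after a Gronwall argument in which the nonlinear remainder is controlled by the second-moment admissibility of $m,\mu$. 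In particular $\psi(t,u)\to 0$ exponentially for every $u\in\cHplus$, and since $F(0)=0$ with $F$ Lipschitz near $0$, the map $s\mapsto F(\psi(s,u))$ is integrable on $[0,\infty)$, so $\phi(t,u)=\int_0^t F(\psi(s,u))\,\D s$ converges to $\int_0^\infty F(\psi(s,u))\,\D s$. Passing to the limit in \eqref{eq:affine-transform-formula} then yields, for every $x\in\cHplus$,
\[
\int_{\cHplus}\E^{-\langle\xi,u\rangle}\,p_t(x,\D\xi)\longrightarrow \exp\Big(-\int_0^\infty F(\psi(s,u))\,\D s\Big),
\]
the candidate Laplace transform \eqref{eq:Laplace-invariant}.

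To turn this pointwise convergence into a genuine limit measure I would use the generalized Feller semigroup $(P_t)$ on $\Bsq$. Using the moment formulas of Proposition~\ref{prop:CKK20} and subcriticality I would first upgrade \eqref{eq: semigroup bound} to a uniform-in-time bound $\sup_{t\ge 0}\int_{\cHplus}\rho(y)\,p_t(x,\D y)\le \tilde K\rho(x)$: indeed, by \eqref{eq:stable-semigroup} the $x$-dependent contributions in \eqref{eq:explicit-Xv} and \eqref{eq:explicit-Xv-square} stay bounded in $t$, whence $\sup_{t\ge0}\|P_t\|_{\mathcal L(\Bsq)}<\infty$. The exponentials $f_u\colon x\mapsto \E^{-\langle x,u\rangle}$ lie in $C_b(\cHplus_{\mathrm w})\subseteq\Bsq$ and $P_tf_u$ converges pointwise to a constant $\ell(f_u)$ by the first paragraph. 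Using the uniform bound and the density of $\lin\{f_u:u\in\cHplus\}$ in $\Bsq$, I would extend $\ell$ to a bounded, positive, unital linear functional ($\ell(f_0)=\lim_t P_t\one=1$ by conservativeness). The variant of the Riesz representation theorem for generalized Feller semigroups (Appendix~\ref{sec:gener-fell-semigr}) then produces an inner-regular probability measure $\pi$ on $\cB(\cHplus_{\mathrm w})$ with $\ell(f)=\int_{\cHplus}f\,\D\pi$, whose Laplace transform is exactly \eqref{eq:Laplace-invariant}. Invariance follows from $\int P_sf\,\D\pi=\ell(P_sf)=\lim_t P_{t+s}f=\ell(f)=\int f\,\D\pi$, and uniqueness from the observation that any invariant $\pi'$ satisfies $\int f_u\,\D\pi'=\int P_tf_u\,\D\pi'\to\ell(f_u)$ by dominated convergence, so $\pi'$ and $\pi$ share the same Laplace transform and therefore coincide.

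For the rates in part ii) I would exploit the affine factorization read off from \eqref{eq:affine-transform-formula}: $p_t(x,\cdot)=q_t*\nu_t^x$, where $q_t,\nu_t^x$ are probability measures with Laplace transforms $\E^{-\phi(t,u)}$ and $\E^{-\langle x,\psi(t,u)\rangle}$, and $\nu_t^x*\nu_t^z=\nu_t^{x+z}$. Convolving both marginals with the same $q_t$ does not increase $W_p$, so $W_p(p_t(x,\cdot),p_t(z,\cdot))\le W_p(\nu_t^x,\nu_t^z)$; combining invariance $\pi=\int p_t(z,\cdot)\,\pi(\D z)$ with the convexity inequality $W_p(\mu,\int\nu_z\,\lambda(\D z))^p\le\int W_p(\mu,\nu_z)^p\,\lambda(\D z)$ gives
\[
W_p(p_t(x,\cdot),\pi)^p\le \int_{\cHplus}W_p(\nu_t^x,\nu_t^z)^p\,\pi(\D z).
\]
To bound $W_p(\nu_t^x,\nu_t^z)$ I would insert Dirac masses at the means $\bar m_t^x,\bar m_t^z$. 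By \eqref{eq:explicit-Xv} the mean is linear in the initial state with $\bar m_t^x=(\E^{t\hat B})^\ast x$, so $\|\bar m_t^x-\bar m_t^z\|\le M\E^{-\delta t}\|x-z\|\le M\E^{-\delta t}(\|x\|+\|z\|)$, which produces the $\E^{-\delta t}$ term with $C_1=2M$. For $p\in[1,2]$, Jensen bounds the central $p$-th moment $W_p(\nu_t^x,\delta_{\bar m_t^x})$ by the central second moment, which by the $x$-linear $\mu$-term of \eqref{eq:explicit-Xv-square} equals $\langle x,\E^{(t-s)\hat B}J_s\rangle$-type integrals with $\|J_s\|\le M^2\E^{-2\delta s}\norm{\mu(\cHpluso)}$; using \eqref{eq:stable-semigroup} twice and integrating in $s$ gives a bound $\lesssim \|x\|\,M^3\delta^{-1}\norm{\mu(\cHpluso)}\E^{-\delta t}$, whose square root yields the $\E^{-\delta/2\,t}$ term with $C_2$. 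Assembling the three pieces and applying Minkowski's inequality in $L^p(\pi)$ to the $z$-integral delivers \eqref{eq:convergence-Wasserstein}--\eqref{eq:convergence-Wasserstein-2}; since $W_p$-convergence is stronger than weak convergence in the norm topology, $p_t(x,\cdot)\Rightarrow\pi$ follows.

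The genuine obstacle lies in part i): transferring convergence of Laplace transforms into existence of the limit measure without a Lévy continuity theorem or a tightness criterion on the infinite-dimensional cone $\cHplus$. The two points requiring real care are the uniform-in-$t$ operator bound $\sup_{t\ge0}\|P_t\|_{\mathcal L(\Bsq)}<\infty$ — which is precisely where subcriticality (Assumption~\ref{assump:subcritical-pure-jump}) enters through Proposition~\ref{prop:CKK20} — and the density of $\lin\{f_u\}$ in $\Bsq$, needed to extend $\ell$ to a continuous functional before the Riesz representation for generalized Feller semigroups can be applied; the rate estimates in part ii), by contrast, are essentially bookkeeping on top of the affine factorization and the moment formulas.
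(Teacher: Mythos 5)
Your architecture coincides with the paper's (Riccati decay $\Rightarrow$ pointwise Laplace limit $\Rightarrow$ uniform bound $\sup_{t\geq 0}\norm{P_{t}}_{\cL(\Bsq)}<\infty$ $\Rightarrow$ density of $\lin\set{\E^{-\langle u,\cdot\rangle}}$ plus Riesz representation $\Rightarrow$ invariance and uniqueness via Laplace transforms; factorization plus convolution-monotonicity and convexity of $W_{p}$ for the rates), but there is one genuine gap, and it sits at the foundation: your derivation of the decay $\norm{\psi(t,u)}\lesssim M\E^{-\delta t}\norm{u}$ by ``linearization at $0$ plus Gronwall'' does not close globally on $\cHplus$. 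Writing $R(u)=\hat{B}u+N(u)$ with $N(u)=-\int_{\cHpluso}\big(\E^{-\langle\xi,u\rangle}-1+\langle\xi,u\rangle\big)\norm{\xi}^{-2}\,\mu(\D\xi)$, the quadratic bound $\norm{N(u)}\lesssim\norm{u}^{2}$ holds only near $u=0$; globally one only has $\norm{N(u)}\leq C(\norm{u}+\norm{u}^{2})$ as in Lemma~\ref{lemma:RFcontinuous}, and a sign-blind Gronwall estimate on the variation-of-constants formula then yields at best $\norm{\psi(t,u)}\leq M\norm{u}\,\E^{(CM-\delta)t}$, which proves nothing for large $\norm{u}$ (and Proposition~\ref{prop:limit-Laplace-transform} and the $\Bsq$-convergence in Proposition~\ref{prop:convergence-semigroup} need the decay at \emph{every} $u\in\cHplus$). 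The scalar model $\dot{\psi}=-\delta\psi+c\psi^{2}$ shows the step genuinely fails without using the sign of the nonlinearity: for $c>0$ large data blow up. Any repair must exploit that $N(u)\leq_{\cHplus}0$, e.g.\ via a comparison principle giving $0\leq_{\cHplus}\psi(t,u)\leq_{\cHplus}\E^{t\hat{B}}u$ together with monotonicity of the Hilbert--Schmidt norm on the cone. The paper sidesteps all of this with a short probabilistic trick (Lemma~\ref{lem:norm-bound-psi}): it applies Jensen's inequality to the auxiliary affine process $(Y_{t})_{t\geq 0}$ with parameter set $(0,B,0,\mu)$, obtaining $\langle\psi(t,u),x\rangle\leq\EXspec{x}{\langle u,Y_{t}\rangle}=\langle x,\E^{t\hat{B}}u\rangle$ for all $x\in\cHplus$, and then chooses $x=\psi(t,u)$ to conclude $\norm{\psi(t,u)}\leq\norm{\E^{t\hat{B}}}_{\cL(\cH)}\norm{u}$.

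The rest of your plan is sound and essentially the paper's proof: the uniform semigroup bound via Proposition~\ref{prop:CKK20}, the extension of $\ell$ and the Riesz representation yielding an inner-regular probability measure, and invariance/uniqueness (your abstract identity $\ell(P_{s}f)=\lim_{t}P_{t+s}f=\ell(f)$ replaces the paper's computation with the flow equation $\psi(t+s,u)=\psi(t,\psi(s,u))$, to the same effect). In part ii) you deviate in one step: where the paper bounds $W_{2}(q_{t}(z,\cdot),q_{t}(y,\cdot))^{2}$ crudely by $2\int\norm{\tilde{x}}^{2}q_{t}(z,\D\tilde{x})+2\int\norm{\tilde{y}}^{2}q_{t}(y,\D\tilde{y})$, you center at the means $\E^{t\hat{B}^{*}}z$, $\E^{t\hat{B}^{*}}y$ and treat the mean shift and the central second moment (the $\mu$-term of \eqref{eq:explicit-Xv-square}) separately; this is legitimate, uses the same convolution lemma (Lemma~\ref{lem:convolution-homogeneous} and Lemma~\ref{lem:Wasserstein-convolution}), and in fact yields the slightly sharper constants $C_{1}=M$ and $C_{2}=M^{3/2}\delta^{-1/2}\norm{\mu(\cHpluso)}^{1/2}$ --- so your claim to recover exactly $C_{1}=2M$ is a harmless miscount, and the stated \eqref{eq:convergence-Wasserstein} follows a fortiori. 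Two points you leave implicit should be made explicit: that $\E^{-\langle x,\psi(t,u)\rangle}$ really is the Laplace transform of a probability kernel $q_{t}(x,\cdot)$ (apply Theorem~\ref{thm:CKK20} to the admissible set $(0,B,0,\mu)$), and that the moments $\int_{\cHplus}\norm{y}^{p}\,\pi(\D y)$, $p\in[1,2]$, in the rate bound are finite, which the paper extracts from the norm identity \eqref{eq:total-variation-norm} in the Riesz representation.
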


\begin{remark}
  \begin{enumerate}
  \item[1)] For locally compact and second countable Hausdorff spaces, in
    particular for finite dimensional normed spaces, every
    probability measure defined on the Borel-$\sigma$-algebra is regular. The
    last assertion in \cref{item:invariant-measure} states that the invariant
    measure $\pi$ is an inner-regular measure on $\cB(\cHplus_\mathrm{w})$, i.e. inner-regular in the weak topology,
    albeit $\cHplus_{\mathrm{w}}$ in the infinite dimensional case is not
    locally compact. We see that the inner-regularity is a
    non-trivial property of the invariant measure and we actually use it in the
    proof of Corollary~\ref{cor:stationary} below.      
  \item[2)] For the case $p=1$ we can compare the convergence rates obtained in
    \cref{item:convergence-rates} with the ones in \cite[Theorem
    2.9]{FJKR20} for the state space $\MS_{d}^{+}$ $(d\in\MN)$ i.e. $H=\MR^{d}$ in our case. We see that instead of the square-root of the
    dimension $d\in\MN$ as it appears in the convergence rate in
    \cite[equation 2.12]{FJKR20}, we have the additional
    term~\eqref{eq:convergence-Wasserstein-2} which also converges
    exponentially fast as $t\to\infty$, but with the exponential factor
    $-\delta/2$ instead of $-\delta$. However, the convergence rates here do
    not depend on the dimension of the state-space, in particular they hold
    true in infinite dimensions.
  \end{enumerate}
\end{remark}

As a corollary from \cref{item:invariant-measure} which ensures the existence of an
invariant inner-regular measure $\pi$ on $\cB(\cHplus_{\mathrm{w}})$, we assert the
existence of a stationary process with stationary measure $\pi$. The only
assertion that is left to prove here is, that we can start an affine process
with transition kernels $p_{t}(x,\cdot)$ at distribution $\pi$ instead of $\delta_{x}$:

\begin{corollary}\label{cor:stationary}
  There exists a process $(X_{t}^{\pi})_{t\geq 0}$ on $\cHplus$ with
  transition kernels $(p_{t}(x,\cdot))_{t\geq 0}$ such that the distribution of $X^{\pi}_{t}$ equals $\pi$ for all $t\geq 0$.
\end{corollary}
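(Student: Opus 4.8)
The plan is to realise $(X_{t}^{\pi})_{t\geq 0}$ as the canonical coordinate process on the path space $\Omega = (\cHplus)^{\MRplus}$ under a probability measure $\MP_{\pi}$ whose finite-dimensional distributions are those of a Markov process with transition kernels $(p_{t}(x,\cdot))_{t\geq 0}$ started from the initial law $\pi$. Concretely, for $0\leq t_{1}<\cdots<t_{n}$ and $A_{1},\dots,A_{n}\in\cB(\cHplus)$ I set
\begin{align*}
  \mu_{t_{1},\dots,t_{n}}(A_{1}\times\cdots\times A_{n})
  = \int_{\cHplus}\pi(\D y_{1})\one_{A_{1}}(y_{1})\int_{\cHplus}p_{t_{2}-t_{1}}(y_{1},\D y_{2})\one_{A_{2}}(y_{2})\cdots\int_{\cHplus}p_{t_{n}-t_{n-1}}(y_{n-1},\D y_{n})\one_{A_{n}}(y_{n}).
\end{align*}
As in Remark~\ref{rem:existence}, the existence of $\MP_{\pi}$ will follow from the version of Kolmogorov's extension theorem of \cite[Theorem 2.11]{CT20} recalled in Appendix~\ref{sec:gener-fell-semigr}; the only point requiring attention is that its hypotheses continue to hold with the non-trivial initial law $\pi$ in place of a Dirac mass $\delta_{x}$.

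First I would verify the Kolmogorov consistency of the family $(\mu_{t_{1},\dots,t_{n}})$. Invariance under permutations of the indices is built into the definition, so only the projective (marginalisation) condition must be checked. Removing an intermediate or the terminal coordinate is handled by the Chapman--Kolmogorov equations and the conservativeness of $(p_{t}(x,\cdot))_{t\geq 0}$, both inherited from Theorem~\ref{thm:CKK20}; removing the initial coordinate $t_{1}$ is exactly where the invariance of $\pi$ enters, since $\int_{\cHplus}\pi(\D y_{1})\,p_{t_{2}-t_{1}}(y_{1},\cdot)=\pi$, so that integrating out $y_{1}$ returns $\mu_{t_{2},\dots,t_{n}}$. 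In particular every one-dimensional marginal equals $\pi$, and the family is moreover shift-invariant, which would even yield full stationarity of the resulting process.

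The main obstacle, and the reason the inner-regularity asserted in \cref{item:invariant-measure} is indispensable, is the verification of the regularity hypothesis of the extension theorem: because $\cHplus_{\mathrm{w}}$ is not locally compact, the classical Kolmogorov theorem does not apply, and one must instead exhibit each $\mu_{t_{1},\dots,t_{n}}$ as an inner-regular (Radon) measure on the product of copies of $\cHplus_{\mathrm{w}}$. The transition kernels $p_{t}(x,\cdot)$ are inner-regular---a property already implicit in the construction of $\MP_{x}$ in Remark~\ref{rem:existence}---and $\pi$ is inner-regular by \cref{item:invariant-measure}; the remaining step is to argue that the Markov composition above preserves inner-regularity, by approximating $\pi$ and each kernel from within by weakly compact sets and taking products. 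Granting this, the hypotheses of \cite[Theorem 2.11]{CT20} are met and the theorem furnishes $\MP_{\pi}$ on $\Omega$ under which the coordinate process $X_{t}^{\pi}(\omega)=\omega(t)$ is Markov with transition kernels $(p_{t}(x,\cdot))_{t\geq 0}$ and $X_{0}^{\pi}\sim\pi$. Finally, for any $t\geq 0$ and $A\in\cB(\cHplus)$ the invariance of $\pi$ gives
\begin{align*}
  \MP_{\pi}(X_{t}^{\pi}\in A)=\int_{\cHplus}p_{t}(x,A)\,\pi(\D x)=\pi(A),
\end{align*}
so that the law of $X_{t}^{\pi}$ equals $\pi$ for every $t\geq 0$, as claimed.
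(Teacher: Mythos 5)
Your proposal is correct and follows essentially the same route as the paper: the paper likewise obtains $\MP_{\pi}$ from the adapted Kolmogorov extension theorem (packaged as Proposition~\ref{prop:initial-distribution} in Appendix~\ref{sec:gener-fell-semigr}, itself resting on \cite[Theorem 2.11]{CT20} and a compact-class argument), keyed precisely on the inner-regularity of $\pi$ from \cref{item:invariant-measure}, and then derives $X^{\pi}_{t}\sim\pi$ from invariance exactly as you do. The only difference is presentational: you unfold the finite-dimensional consistency check and the regularity of the composed marginals by hand, whereas the paper invokes the appendix proposition as a black box.
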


Note here that the $p$-th absolute moment of $\pi$ shows up in the convergence
rate in \eqref{eq:convergence-Wasserstein}, where we implicitly assumed that
these terms are finite. That this is indeed the case is part of the next
proposition, where we also prove explicit formulas for the first two moments
of the invariant measures $\pi$.

\begin{proposition}\label{prop:limit-moment-formula}
Under the same conditions as in Theorem \ref{thm:convergence-Wasserstein} and by denoting the unique invariant measure of $(p_{t}(x,\cdot))_{t\geq 0}$
by $\pi$ we have $\int_{\mathcal{H}^+}\|y\|^2 \,\pi(\D y) < \infty$,
\begin{align}\label{eq:limit-first-moment-formula}
\lim_{t\to\infty}\EXspec{x}{X_{t}}=\int_{\cHplus}y\,\pi(\D
  y)=\int_{0}^{\infty}\E^{s\hat{B}}\Big(b+\int_{\cHplus\cap\set{\norm{\xi}>1}}\xi\,m(\D\xi)\Big)\,\D s, 
\end{align}
and
\begin{align}\label{eq:limit-second-moment-formula}
 \lim_{t\to\infty}\EXspec{x}{X_{t}\otimes X_{t}}&=\int_{\cHplus}y\otimes y\,\pi(\D y)\nonumber\\
  &=\int_{0}^{\infty}\big(\E^{s\hat{B}^{*}}\hat{b}\big)^{\otimes 2}\D
  s+\int_{0}^{\infty}\int_{\cHpluso}\big(\E^{s\hat{B}^{*}}\xi\big)^{\otimes2}\,m(\D\xi)\D
  s\nonumber\\
  &\quad+
    \int_{0}^{\infty}\int_{0}^{s}\int_{\cHpluso}\big(\E^{u\hat{B}^{*}}\xi\big)^{\otimes
    2}\langle
    \hat{b},\E^{(s-u)\hat{B}}\,\frac{\mu(\D\xi)}{\norm{\xi}^{2}}\rangle\,\D
    u\,\D s.
\end{align}
\end{proposition}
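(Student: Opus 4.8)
The plan is to read off all three assertions from the explicit first- and second-moment formulas of Proposition~\ref{prop:CKK20} by letting $t\to\infty$, the driving mechanism being the uniform exponential decay $\norm{\E^{t\hat{B}}}_{\cL(\cH)}\le M\E^{-\delta t}$ from \eqref{eq:stable-semigroup}. I would start with the finiteness claim, since the Wasserstein estimate in Theorem~\ref{thm:convergence-Wasserstein} presupposes $\int_{\cHplus}\norm{y}^2\,\pi(\D y)<\infty$. Fixing an orthonormal basis $(e_n)$ of $\cH$ and writing $\EXspec{x}{\norm{X_t}^2}=\sum_n\EXspec{x}{\langle X_t,e_n\rangle^2}$, I substitute $v=w=e_n$ into \eqref{eq:explicit-Xv-square} and sum, interchanging sum and integral by Tonelli. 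The basic estimate is $\sum_n\langle\xi,\E^{s\hat{B}}e_n\rangle^2=\norm{\E^{s\hat{B}^{*}}\xi}^2\le M^2\E^{-2\delta s}\norm{\xi}^2$; combined with the admissibility bound $\int_{\cHpluso}\norm{\xi}^2\,m(\D\xi)<\infty$ from \cref{item:m-2moment}, the finiteness of $\norm{\mu(\cHpluso)}$, and the integrability of $\E^{-\delta s}$, it bounds each of the four terms uniformly in $t$ (the first by $\norm{\EXspec{x}{X_t}}^2\le M^2(\norm{\hat{b}}/\delta+\norm{x})^2$). Hence $\sup_{t\ge 0}\EXspec{x}{\norm{X_t}^2}<\infty$, and since $p_t(x,\cdot)\Rightarrow\pi$ while $y\mapsto\norm{y}^2$ is nonnegative and continuous, the Portmanteau theorem gives $\int_{\cHplus}\norm{y}^2\,\pi(\D y)\le\liminf_{t\to\infty}\EXspec{x}{\norm{X_t}^2}<\infty$.

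With finiteness secured, the right-hand side of \eqref{eq:convergence-Wasserstein} is finite and tends to $0$, so $W_2(p_t(x,\cdot),\pi)\to 0$. The decisive tool is that, on the Polish space $(\cH,\norm{\cdot})$, convergence in $W_2$ is equivalent to weak convergence together with convergence of second moments, and therefore forces $\int_{\cHplus}f(\xi)\,p_t(x,\D\xi)\to\int_{\cHplus}f(y)\,\pi(\D y)$ for every \emph{continuous} $f$ with $|f(y)|\le C(1+\norm{y}^2)$, see \cite[Theorem 6.9]{Vil09}. I would apply this to the linear functionals $f=\langle\cdot,v\rangle$ and the quadratic functionals $f=\langle\cdot,v\rangle\langle\cdot,w\rangle$ with $v,w\in\cHplus$, which determine $\int y\,\pi(\D y)$ and $\int y\otimes y\,\pi(\D y)$ since $\cHplus$ is generating. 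This is the heart of the matter: weak convergence against $C_b(\cH)$ cannot handle these unbounded integrands, and a mere uniform second-moment bound does not yield uniform integrability of a quadratically growing integrand; it is precisely the upgrade to $W_2$ that legitimises passing the moments through the limit.

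It then remains to evaluate the limits. In \eqref{eq:explicit-Xv} the boundary term satisfies $|\langle x,\E^{t\hat{B}}v\rangle|\le M\norm{x}\norm{v}\E^{-\delta t}\to 0$, while the integrand is dominated by $M\norm{\hat{b}}\norm{v}\E^{-\delta s}$, so $\EXspec{x}{\langle X_t,v\rangle}\to\int_0^\infty\langle\hat{b},\E^{s\hat{B}}v\rangle\,\D s$; identifying this with $\langle\int y\,\pi(\D y),v\rangle$ for all $v\in\cHplus$ yields \eqref{eq:limit-first-moment-formula}. For the second moment I pass to the limit in \eqref{eq:explicit-Xv-square} term by term: the product term converges to $\langle\int y\,\pi(\D y),v\rangle\langle\int y\,\pi(\D y),w\rangle$, i.e.\ to the outer product of the first moment; the term carrying the factor $\langle x,\E^{(t-s)\hat{B}}\cdots\rangle$ is bounded by a constant times $\int_0^t\E^{-\delta(t-s)}\E^{-2\delta s}\,\D s\le\delta^{-1}\E^{-\delta t}\to 0$ and hence drops out; and the two remaining integrals converge, by dominated convergence using the bounds above, to the corresponding integrals over $[0,\infty)$ appearing in \eqref{eq:limit-second-moment-formula}. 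Collecting the surviving contributions and using bilinearity together with the generating property of $\cHplus$ recovers the operator identity.

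The main obstacle is the second-moment passage in the last paragraph: because the integrand grows quadratically, neither the weak convergence nor the uniform second-moment bound alone suffices, and one must call on the full $W_2$-convergence, whose validity in turn rests on the finiteness $\int_{\cHplus}\norm{y}^2\,\pi(\D y)<\infty$ established first. Keeping this order --- finiteness via a uniform bound and lower semicontinuity, then $W_2$-convergence, then moment convergence --- is the one genuinely delicate point; the accompanying estimates are routine consequences of \eqref{eq:stable-semigroup} and the admissibility conditions in Definition~\ref{def:admissibility}.
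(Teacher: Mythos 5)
Your plan is sound in outline and your ordering (finiteness first, then $W_2$-convergence, then moment identification) matches the logical structure required, but the first step contains a circularity as written. To prove $\int_{\cHplus}\norm{y}^{2}\,\pi(\D y)<\infty$ you invoke $p_{t}(x,\cdot)\Rightarrow\pi$ and apply Portmanteau to the norm-continuous function $y\mapsto\norm{y}^{2}$. However, weak convergence in the \emph{strong} topology is only obtained downstream of the Wasserstein estimate \eqref{eq:convergence-Wasserstein}, whose right-hand side contains $\int_{\cHplus}\norm{y}^{p}\,\pi(\D y)$ --- precisely the quantity whose finiteness is at stake; before that step only the convergence against $C_{b}(\cHplus_{\mathrm{w}})$ from Proposition~\ref{prop:existence-invariant-measure} (see \eqref{eq:convergence-to-invariant} and Remark~\ref{rem:weak-convergence}) is available. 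The repair stays within your scheme: $\norm{\cdot}^{2}$ is weakly lower semicontinuous, indeed $\norm{y}^{2}=\sup_{N,R}f_{N,R}(y)$ with $f_{N,R}(y)=\min\bigl\{\sum_{i=1}^{N}\langle y,e_{i}\rangle^{2},R\bigr\}$ bounded and weakly continuous, so $\int_{\cHplus}f_{N,R}\,\D\pi=\lim_{t\to\infty}P_{t}f_{N,R}(x)\leq\sup_{t\geq 0}\EXspec{x}{\norm{X_{t}}^{2}}<\infty$, and monotone convergence in $N,R$ gives the claim. Note also that your uniform bound $\sup_{t\geq 0}\EXspec{x}{\norm{X_{t}}^{2}}<\infty$ is exactly Lemma~\ref{lem:uniform-growth-bound-semigroup}, so you are re-deriving existing work there.

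Beyond this fix, your route genuinely differs from the paper's and is legitimate. The paper gets finiteness for free from the Riesz representation: \eqref{eq:total-variation-norm} gives $\int_{\cHplus}(1+\norm{\xi}^{2})\,\pi(\D\xi)=\norm{\pi}_{\cL(\Bsq,\MR)}<\infty$, with no Portmanteau argument at all. For the first moment the paper observes that $\langle u,\cdot\rangle\in\Bsq$ (via Theorem~\ref{thm:charac_Brho}) and applies the generalized Feller convergence of Proposition~\ref{prop:convergence-semigroup} directly --- the paper even remarks that \eqref{eq:limit-first-moment-formula} needs no Wasserstein input, whereas you route it through $W_{2}$; your way works but is less economical. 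For the second moment the two approaches essentially meet: since $g^{i,j}=\langle\cdot,e_{i}\rangle\langle\cdot,e_{j}\rangle\notin\Bsq$, the paper truncates $g^{i,j}\wedge n$ and uses the uniform integrability $\sup_{t\geq 0}\EX{\norm{X_{t}}^{2}\mathbbm{1}_{\{\norm{X_{t}}^{2}>n\}}}\to 0$, itself drawn from the Wasserstein characterization in \cite[Section 6]{Vil09}; your direct appeal to \cite[Theorem 6.9]{Vil09} for continuous test functions of quadratic growth simply packages that truncation argument. Finally, \eqref{eq:limit-first-moment-formula} and \eqref{eq:limit-second-moment-formula} assert Bochner (norm) limits in $\cH$ and $\cL_{2}(\cH)$, while your identification is componentwise; you should close this the way the paper does --- a uniform tail estimate over the basis expansion, or equivalently by noting that \eqref{eq:explicit-Xv} and \eqref{eq:explicit-Xv-square} identify $\EXspec{x}{X_{t}}$ and $\EXspec{x}{X_{t}\otimes X_{t}}$ in closed form, whence \eqref{eq:stable-semigroup} yields absolute convergence of the limiting integrals and convergence in norm. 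With these two repairs your argument is complete.
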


\begin{remark}
  It is well known that convergence in Wasserstein distance of order $p\in
  [1,\infty)$ implies weak convergence and the convergence of the $p$-th
  absolute moment, see \cite[Theorem 6.9]{Vil09}. However, we want to remark
  here that the proof of~\eqref{eq:limit-first-moment-formula}, given in
  Section~\ref{sec:proof-prop-refpr}, does not depend on
  the convergence in Wasserstein distance of order $p=2$ as established by
  \cref{item:convergence-rates}. Instead we solely use the generalized
  Feller property of the transition semigroups $(P_{t})_{t\geq 0}$ together with
  Proposition~\ref{prop:CKK20}.    
\end{remark}

\begin{example}[L\'evy driven Ornstein-Uhlenbeck processes]\label{ex:OU}
  Let $m$ be a L\'evy measure on $\cB(\cHpluso)$ with finite second moment and
  $b\in\cHplus$ such that \cref{item:drift} is satisfied. Let $\mu=0$ and
  $B\in\cL(\cH)$ be of the form $B(u)=Gu+uG^{*}$ for some $G\in\cL(H)$, then
  \cref{item:linear-operator} is satisfied, which can be seen from the fact
  that for every $u\in\cHplus$ we have
  $\E^{tB}u=\E^{tG} u \E^{tG^{*}}\geq_{\cHplus} 0$ for all $t\geq
  0$. Hence~\cite[Theorem 1]{LV98} implies that $B$ satisfies \cref{item:linear-operator}. Thus the tuple $(b,m,B,0)$ is an admissible
  parameter set according to Definition~\ref{def:admissibility} and the
  associated affine process $(X_{t})_{t\geq 0}$ becomes an Ornstein-Uhlenbeck
  process driven by a $\cHplus$-valued L\'evy process $(L_{t})_{t\geq 0}$ with
  characteristics $(b,0,m)$, see \cite[Lemma 2.3]{cox2021infinitedimensional}, i.e.
  \begin{align*}
    X_{t}=\E^{t G}x\E^{tG^{*}}+\int_{0}^{t}\E^{(t-s)G}\,\D
    L_{s}\E^{(t-s)G^{*}},\quad t\geq 0. 
  \end{align*}
  Since $\sigma(B)=\sigma(G)+\sigma(G)$, see \cite{Ros56}, and hence $s(B)\leq
  s(G)$, we see that whenever the spectral bound $s(G)$ of the operator $G$ is
  negative, the same holds for $s(B)$ and hence
  Assumption~\ref{assump:subcritical-pure-jump} is satisfied. This provides an explicit and simple sufficient criterion for the Ornstein-Uhlenbeck process $(X_{t})_{t\geq 0}$ to be subcritical. By Theorem~\ref{thm:convergence-Wasserstein} there exists a unique
  invariant measure $\pi$ with Laplace transform
  \begin{align*}
  \int_{\cHplus}\E^{-\langle u,x\rangle}\,\pi(\D
    x)=\exp\left(-\int_{0}^{\infty}\varphi_{L}\big( \E^{sG}u\E^{sG^{*}}\big)\,\D s\right), 
  \end{align*}
  where $\varphi_{L}\colon \cH\to \MC$ denotes the Laplace exponent of the
  L\'evy process $L$ given by
  \begin{align}\label{eq:Laplace-Levysubordinator}
  \varphi_{L}(u)= \langle b, u\rangle-\int_{\cHpluso}\big(\E^{-\langle \xi,
  u\rangle}-1+\langle \chi(\xi), u\rangle\big)\, m(\D \xi)\, ,\quad u\in \cHplus.  
  \end{align}
  Existence and uniqueness of invariant measures for Ornstein-Uhlenbeck
  processes were studied in \cite{CM87}, where a similar result follows under the
  weaker log-moment condition on the L\'evy measure $m$. Following
  Proposition~\ref{prop:limit-moment-formula} the stronger second 
  moment assumption in our case allows us to deduce explicit formulas for the
  first and second moments of $\pi$. Indeed, setting $\mu=0$ and $B(u)=Gu+uG^{*}$
  in~\eqref{eq:limit-first-moment-formula}
  and~\eqref{eq:limit-second-moment-formula} gives 
\begin{align*}
  \lim_{t\to\infty}\EXspec{x}{X_{t}}=\int_{0}^{\infty}\E^{sG}\Big(b+\int_{\cHpluso\cap\set{\norm{\xi}>1}}\xi\,m(\D\xi)\Big)\E^{sG^{*}}\,\D s, 
\end{align*}
and
\begin{align*}
  \lim_{t\to\infty}\EXspec{x}{X_{t}\otimes X_{t}}&=\int_{0}^{\infty}\big(\E^{sG}\hat{b}\E^{sG^{*}}\big)^{\otimes 2}\D
                                                   s+\int_{0}^{\infty}\int_{\cHpluso}\big(\E^{sG}\xi\E^{sG^{*}}\big)^{\otimes
                                                   2}\,m(\D\xi)\D
                                                   s. 
\end{align*}
\end{example}

\section{Applications}\label{sec:applications}

In this section we discuss applications of our results in the context of affine
stochastic covariance models in Hilbert spaces. In
Section~\ref{sec:affine-sv-models-1} we introduce an abstract Hilbert valued stochastic covariance model in the so called \textit{stationary covariance
regime} and derive the stationary affine transform formulas for examples from the literature.
In Section~\ref{sec:model-forw-curve} we then consider a concrete example of an
geometric affine stochastic covariance model describing the dynamics of commodity forward curves. In particular, we show that the implied volatility of forward-start options written on forwards modeled by this model can be related to the implied volatility of plain
vanilla options on forwards modeled under the stationary covariance regime.

\subsection{Affine SV models in the stationary variance regime}\label{sec:affine-sv-models-1}
Let $(X_{t})_{t\geq 0}$ be an affine process on $\cHplus$ with with initial value $X_{0}=x$ and associated with an admissible
parameter set $(b,B,m,\mu)$. Moreover, assume that
$(Y_{t})_{t\geq 0}$ is the unique (mild) solution to the following stochastic differential
equation on some separable Hilbert space $\left(H,(\cdot,\cdot)_{H}\right)$:
\begin{align}\label{eq:Y}
  Y_{t}=S(t)y+\int_{0}^{t}S(t-s)G(X_{s})\,\D
  s+\int_{0}^{t}S(t-s)X_{t}^{1/2}\,\D W_{t},\quad t\geq 0,  
\end{align}
where $G\colon \cH\to H$ is a continuous affine linear function,
$(W_{t})_{t\geq 0}$ is a $H$-valued Brownian motion, independent of $(X_{t})_{t\geq 0}$, with covariance operator $\cQ\in\cL_{2}(H)$, and
$(S(t))_{t\geq 0}$ is a strongly continuous semigroup on $H$ with generator $(\cA,\dom(\cA))$. We call
the joint process $(Y_{t},X_{t})_{t\geq 0}$ an \textit{affine stochastic
  covariance model} on $H$. Examples for stochastic covariance models in a Hilbert space setting can be found in~\cite{BS18, BRS18, cox2021infinitedimensional, benth2021barndorff}.
  Note that the joint process $(Y_{t},X_{t})_{t\geq 0}$ can be
considered as a stochastic process on the filtered probability space $(\Omega, \cF, \MF, \MQ_{x})\df(\Omega^{1}\times\Omega^{2}, (\cF^{1}\otimes \cF^{2}),
(\cF^{1}_{t}\otimes \cF_{t}^{2})_{t\geq 0}, \MQ \otimes \MP_{x})$,
where $(\Omega^2,\cF^2,(\mathcal{F}_t^2)_{t\ge 0}, \mathbb{P}_{x})$ denotes the
filtered probability space accommodating the affine process $(X_{t})_{t\geq 0}$, see also Remark~\ref{rem:existence}, and $(\Omega^1,\cF^1,  (\mathcal{F}_t^1)_{t\ge 0}, \mathbb{Q})$ is
another filtered probability space, that carries a $\cQ$-Wiener process $W\colon [0,\infty)\times \Omega \rightarrow H$ and the solution process $(Y_t)_{t\geq 0}$ such that $\MQ(Y_0=y)=1$. From now on we write $(Y_t^y)_{t\geq 0}$ where the superscript $y$ indicates the initial value of the process $(Y_t)_{t\geq 0}$. Moreover, we denote the expectation with respect to the product measure $\MQ_{x}$ by $\EXspec{x}{\cdot}$. 

Heuristically, the joint process
$(Y^{y}_{t},X_{t})_{t\geq 0}$ satisfies a similar transform formula for its mixed
Fourier-Laplace transform as the process $(X_{t})_{t\geq 0}$ does for the Laplace
transform in~\eqref{eq:affine-transform-intro}, see also \cite{BRS18,
  cox2021infinitedimensional}, which justifies the name affine stochastic covariance model. If moreover Assumption~\ref{assump:subcritical-pure-jump} is satisfied, then
by Theorem~\ref{thm:convergence-Wasserstein} there
exists a unique invariant measure $\pi$ for $(p_{t}(x,\cdot))_{t\geq 0}$ and by
Corollary~\ref{cor:stationary} the existence of the stationary process
$(X_{t}^{\pi})_{t\geq 0}$ is ensured. Now, if there exists a mild solution
$(\tilde{Y}_{t})_{t\geq 0}$ of~\eqref{eq:Y} for $y=0$ and where the process
$(X_{t})_{t\geq 0}$ is replaced by $(X_{t}^{\pi})_{t\geq 0}$, then we call the
joint process $(\tilde{Y}_{t},X^{\pi}_{t})_{t\geq 0}$, defined on $(\Omega, \cF, \MF,
\MQ_{\pi})$ (with $\MQ_{\pi}=\MQ\otimes \MP_{\pi}$ and the expectation with respect to $\MQ_{\pi}$ is denoted by $\EXspec{\pi}{\cdot}$) an affine stochastic covariance model on $H$ in the \textit{stationary covariance regime}. This terminology is inspired by the univariate setting in \cite[Section
3]{Kel11}. 

Below we consider a particular class of models constructed in \cite{cox2021infinitedimensional} which we call \textit{SV model with affine pure-jump variance}.
Namely, let $(Y^{y}_{t},X_{t})_{t\geq 0}$ be the process with first component given by~\eqref{eq:Y} and the second component given by the affine process $(X_{t})_{t\geq 0}$, where we assume that it has c\`adl\`ag paths and that there exists a positive and self-adjoint $D\in\cL(H)$ such that $X^{1/2}_{t}QX_{t}^{1/2}=D^{1/2}X_{t}D^{1/2}$ for all $t\geq 0$, compare with \cite[Assumption $\mathfrak{B}$ and $\mathfrak{C}$]{cox2021infinitedimensional}. It was shown that under these conditions the stochastic covariance model $(Y^{y}_{t},X_{t})_{t\geq 0}$ is well defined for every initial value $(y,x)\in H\times \cHplus$. In the following proposition we give an affine transform formula for this model in the stationary covariance regime:
\begin{proposition}\label{prop:affine-formula-stationary}
  Assume that $(Y^{y}_{t},X_{t})_{t\geq 0}$ is an affine stochastic covariance
  model satisfying the assumptions above and let $(\tilde{Y}_{t},X_{t}^{\pi})_{t\geq 0}$
  be the model in the stationary covariance regime. Then for every $T\geq 0$ and
  $u=(u_{1},u_{2})\in\I H\times \cHplus$ we have 
  \begin{align}\label{eq:characteristic-function-stationary}
    \EXspec{\pi}{\E^{ \langle \tilde{Y}_{t},u_{1}\rangle-\langle
    X^{\pi}_{t},u_{2}\rangle}}=\E^{-\Phi(t,u_{1},u_{2})-\int_{0}^{\infty}F(\psi_{2}(s,0,\psi_{2}(t,u_{1},u_{2})))\,\D
    s},\, t\in [0,T],
  \end{align}
  where $\Phi(\cdot,u_{1},u_{2})$, $\psi_{1}(\cdot,u_{1},u_{2})$ and
  $\psi_{2}(\cdot,u_{1},u_{2})$ are the unique solutions on $[0,T]$ of the following differential equations:
  \begin{subequations}
\begin{empheq}[left=\empheqlbrace]{align}
   \,\frac{\partial\Phi}{\partial t}(t,u)&=F(\psi_{2}(t,u)), &\quad\Phi(0,u)=0,\label{eq:Riccati-phi-psi-1-1}\\
    \,\psi_{1}(t,u)&=u_{1}-\I \cA^{*}\left(\I\int_{0}^{t}\psi_{1}(s,u)\D
                   s\right), &\quad\psi_{1}(0,u)=u_{1}, \label{eq:Riccati-phi-psi-1-2}\\
    \,\frac{\partial \psi_{2}}{\partial t}(t,u)&=\mathcal{R}(\psi_{1}(t,u),
    \psi_{2}(t,u)), & \quad
                                                  \psi_{2}(0,u)=u_{2},\label{eq:Riccati-phi-psi-1-3}
       \end{empheq}
     \end{subequations}
     where $F$ is as in~\eqref{eq:F}, $\mathcal{R}(h,u)\df
     R(u)-\tfrac{1}{2}D^{1/2}h\otimes D^{1/2}h$ with $R$ as in~\eqref{eq:R}, and $(\mathcal{A}^*, D(\mathcal{A}^*))$ denotes the adjoint operator of the generator $(\mathcal{A}, D(\mathcal{A}))$ of $(S(t))_{t \geq 0}$.
\end{proposition}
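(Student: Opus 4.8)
The plan is to reduce the stationary transform to the affine transform formula already available for the non-stationary joint model $(Y^{y}_{t},X_{t})_{t\geq 0}$ and then to average the resulting exponential-affine expression against the invariant law $\pi$ using its Laplace transform \eqref{eq:Laplace-invariant}. First I would record the \emph{mixing representation} of the stationary model. By construction of $(X_{t}^{\pi})_{t\geq 0}$ via Corollary~\ref{cor:stationary} together with the Kolmogorov-type construction recalled in Remark~\ref{rem:existence}, the law of the stationary variance process is the $\pi$-mixture $\MP_{\pi}=\int_{\cHplus}\MP_{x}\,\pi(\D x)$ of the laws $\MP_{x}$ of $X$ started at $x$ (both are determined by the same transition kernels and finite-dimensional distributions). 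Since the driving Wiener process $W$ is independent of the variance process, $\MQ_{\pi}=\MQ\otimes\MP_{\pi}=\int_{\cHplus}\MQ_{x}\,\pi(\D x)$. Because $\tilde Y$ is produced from $X^{\pi}$ by exactly the same mild-solution map \eqref{eq:Y} (with $y=0$) that produces $Y^{0}$ from $X$, Tonelli--Fubini yields
\[
 \EXspec{\pi}{\E^{\langle\tilde Y_{t},u_{1}\rangle-\langle X^{\pi}_{t},u_{2}\rangle}}=\int_{\cHplus}\EXspec{x}{\E^{\langle Y^{0}_{t},u_{1}\rangle-\langle X_{t},u_{2}\rangle}}\,\pi(\D x).
\]

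Into the right-hand side I would insert the affine transform formula for the non-stationary model from \cite{cox2021infinitedimensional}, namely $\EXspec{x}{\E^{\langle Y^{0}_{t},u_{1}\rangle-\langle X_{t},u_{2}\rangle}}=\E^{-\Phi(t,u)-\langle x,\psi_{2}(t,u)\rangle}$, with $\Phi,\psi_{1},\psi_{2}$ solving \eqref{eq:Riccati-phi-psi-1-1}--\eqref{eq:Riccati-phi-psi-1-3}; the contribution $\langle y,\psi_{1}(t,u)\rangle$ drops out because $y=0$. This reduces the claim to evaluating $\int_{\cHplus}\E^{-\langle x,\psi_{2}(t,u)\rangle}\,\pi(\D x)$.

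The decisive technical point is that $\psi_{2}(t,u)$ is an admissible (real, positive) argument for the Laplace transform of $\pi$. Since $u_{1}\in\I H$, the solution of \eqref{eq:Riccati-phi-psi-1-2} stays purely imaginary, $\psi_{1}(t,u)\in\I H$; writing $\psi_{1}=\I\eta$ with $\eta\in H$ real, the only place where $\psi_{1}$ enters the $\psi_{2}$-equation, the term $-\tfrac12 D^{1/2}\psi_{1}\otimes D^{1/2}\psi_{1}$, becomes $+\tfrac12 (D^{1/2}\eta)^{\otimes 2}\in\cHplus$. Hence $\mathcal{R}(\psi_{1}(t,u),\cdot)$ merely adds a real nonnegative forcing term to the cone-preserving (quasi-monotone) vector field $R$, so that $\psi_{2}(\cdot,u)$ remains real-valued and stays in $\cHplus$ for $u_{2}\in\cHplus$. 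In particular $|\E^{-\langle x,\psi_{2}(t,u)\rangle}|\le 1$, which also supplies the domination needed for the Fubini step above. I can therefore apply \eqref{eq:Laplace-invariant} with its argument replaced by $\psi_{2}(t,u)\in\cHplus$ to get
\[
 \int_{\cHplus}\E^{-\langle x,\psi_{2}(t,u)\rangle}\,\pi(\D x)=\exp\Big(-\int_{0}^{\infty}F(\psi(s,\psi_{2}(t,u)))\,\D s\Big),
\]
where $\psi(\cdot,v)$ solves the $X$-Riccati equation \eqref{eq:Riccati-psi}.

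Finally I would identify $\psi(s,v)$ with $\psi_{2}(s,0,v)$: setting the first argument to $0$ in \eqref{eq:Riccati-phi-psi-1-2} forces $\psi_{1}\equiv 0$ by uniqueness, whence $\mathcal{R}(0,\cdot)=R(\cdot)$ and \eqref{eq:Riccati-phi-psi-1-3} collapses exactly to \eqref{eq:Riccati-psi}; by uniqueness of solutions $\psi_{2}(s,0,v)=\psi(s,v)$. Substituting this and recombining the $\E^{-\Phi(t,u)}$ factor gives precisely \eqref{eq:characteristic-function-stationary}. I expect the main obstacle to be the rigorous justification of the mixing/Fubini representation, i.e. verifying that the stationary covariance model really is the $\pi$-average of the initial-value-$x$ models and that all interchanges of (possibly infinite-time) integration are legitimate; the positivity bookkeeping for $\psi_{2}$ and the flow identification are comparatively routine once that foundation is in place.
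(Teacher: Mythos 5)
Your proposal follows essentially the same route as the paper's own proof: disintegrate $\EXspec{\pi}{\cdot}$ over $\pi$ via Fubini, insert the affine transform formula of \cite[Theorem 3.3]{cox2021infinitedimensional} for the non-stationary model, evaluate the resulting integral with the Laplace transform \eqref{eq:Laplace-invariant} from Theorem~\ref{thm:convergence-Wasserstein}, and identify $\psi_{2}(s,0,v)=\psi(s,v)$ by uniqueness since $\mathcal{R}(0,\cdot)=R(\cdot)$. Your explicit bookkeeping that $\psi_{2}(t,u)$ stays $\cHplus$-valued for $u_{1}\in\I H$ (so that \eqref{eq:Laplace-invariant} applies and the Fubini step is dominated) is a correct elaboration of a point the paper leaves implicit in its citation, not a different approach.
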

\begin{proof}
  Let $T\geq 0$ and let $(Y^{y}_{t})_{t\in [0,T]}$ be the mild solution
  to~\eqref{eq:Y} on $[0,T]$ satisfying the assumptions above. From
  \cite[Theorem 3.3]{cox2021infinitedimensional} we recall the following affine
  transform formula for the mixed Fourier-Laplace transform of
  $(Y^{y}_{t},X_{t})$ for $t\in[0,T]$ and $u=(u_{1},u_{2})\in \I H\times\cHplus$:   
  \begin{align}\label{eq:affine-formula-joint}
    \EXspec{x}{\E^{\langle Y^{y}_{t},u_{1}\rangle-\langle
    X_{t},u_{2}\rangle}}=\E^{-\Phi(t,u_{1},u_{2})-\langle x,
    \psi_{2}(t,u_{1},u_{2})\rangle},\quad x\in\cHplus,
  \end{align}
  where $\Phi(\cdot,u_1,u_2)$ and $\psi_2(\cdot,u_1,u_2)$ are the unique strong solutions to~\eqref{eq:Riccati-phi-psi-1-1} and \eqref{eq:Riccati-phi-psi-1-3}, respectively, and $\psi_1(\cdot,u_1,u_2)$ is the unique mild solution of~\eqref{eq:Riccati-phi-psi-1-2}.
  Note that for every $\cF^2$-measurable and bounded function $f$ we have
  \begin{align*}
   \int_{\Omega_{2}}
    f(\omega_{2})\,\D\MP_{x}(\omega_{2})=\int_{\cHplus}\left(\int_{\Omega_{2}}
    f(\omega_{2})\,\D\MP_{x}(\omega_{2})\right)\pi(\D  x).
  \end{align*}
  From this and ~\eqref{eq:affine-formula-joint} we conclude
  \begin{align}
   \EXspec{\pi}{\E^{ \langle \tilde{Y}_{t},u_{1}\rangle-\langle
    X^{\pi}_{t},u_{2}\rangle}}&=\int_{\Omega_{2}}\Big( \int_{\Omega_{1}}\E^{
                          \langle \tilde{Y}_{t}(\omega_{1},\omega_{2}),u_{1}\rangle_{H}-\langle
                          X^{\pi}_{t}(\omega_{2}), u_{2}\rangle}\,\D
                                \MQ(\omega_{1})\Big)\,\D\MP_{\pi}(\omega_{2})\nonumber\\
                        &=\int_{\cHplus}\EXspec{x}{\E^{\langle
                          Y_{t},u_{1}\rangle_{H}-\langle
                          X_{t},u_{2}\rangle}}\,\pi(\D x)\nonumber\\
                        &=\int_{\cHplus}\E^{-\Phi(t,u_{1},u_{2})-\langle
                          x,\psi_{2}(t,u_{1},u_{2})\rangle}\,\pi(\D x).\label{eq:affine-formula-stationary-1}
  \end{align}
  From~\cref{item:invariant-measure} it then follows that 
  \begin{align}\label{eq:affine-formula-stationary-2}
  \int_{\cHplus}\E^{-\langle
    x,\psi_{2}(t,u_{1},u_{2})\rangle}\,\pi(\D
    x)=\exp\left(-\int_{0}^{\infty}F\big(\psi(s,\psi_{2}(t,u_{1},u_{2}))\big)\,\D
    s\right),
  \end{align}
  where $\psi(\cdot,u)$ is given
  by~\eqref{eq:Riccati-psi}. From~\eqref{eq:Riccati-phi-psi-1-3} and the
  definition of $\mathcal{R}$ we see that $\psi_{2}(t,0,u_{2})=\psi(t,u_{2})$
  for every $u_{2}\in\cHplus$, hence multiplying both sides
  of~\eqref{eq:affine-formula-stationary-2} by $\E^{-\Phi(t,u_{1},u_{2})}$
  together with~\eqref{eq:affine-formula-stationary-1} yields the desired
  formula~\eqref{eq:characteristic-function-stationary}.     
\end{proof}

In a more specific setting, the authors in~\cite{BRS18} proposed an \textit{operator Barndorff--Nielsen-Shepard} (BNS) stochastic covariance model which can be described by the following pair of Hilbert valued SDEs:
\begin{align}\label{BNS}
  \begin{cases} \D Y_{t} &= \cA(Y_{t})\D t+X_{t}^{1/2}\D W_{t},\quad
    Y_{0}=y\in H,\\
  \D X_{t}&= B(X_{t})\D t+ \D L_{t},\quad X_{0}=x\in\cHplus,
  \end{cases}
\end{align}
where $B$ and $(L_{t})_{t\geq 0}$ are as in Example~\ref{ex:OU}.
Note that $(Y^{y}_t)_{t\geq 0}$ is as in~\eqref{eq:Y} with $G=0$ written in the differential form, where $(S(t))_{t\geq 0}$ is the strongly continuous semigroup generated by $(\cA,\dom(\cA))$. For the $\cHplus$-valued Ornstein-Uhlenbeck process $(X_t)_{t\geq 0}$ we already showed the existence of a unique invariant measure $\pi$ of $(X_{t})_{t\geq 0}$ in Example~\ref{ex:OU}. Hence we may consider the \textit{operator BNS model in the stationary covariance regime} and denote it by $(\tilde{Y}_{t}, X^{\pi}_{t})_{t\geq 0}$. 
In \cite{cox2021infinitedimensional} it was shown that the operator valued BNS model is a particular case of the SV models with affine pure-jump variance as introduced above. Hence we obtain from Proposition~\ref{prop:affine-formula-stationary} applied to this particular case
\begin{align*}
 \EXspec{\pi}{\E^{\langle \tilde{Y}_{t},u_{1}\rangle_{H}-\langle X^{\pi}_{t},
  u_{2}\rangle}}=\exp\left(-\int_{0}^{t}\varphi_{L}\big(\psi(s,u_{1},u_{2})\big)\D s-\int_{0}^{\infty}\varphi_{L}\big(e^{s B^*}u_2\big)\D s\right),
\end{align*}
for every $(u_{1},u_2)\in \I H\times\cHplus$, where $\varphi_{L}$ is given by \eqref{eq:Laplace-Levysubordinator} and $\psi(t,u_{1},u_{2})$ is explicitly known as
\begin{align*}
 \psi(t,u_{1},u_{2})=\E^{sB^{*}}u_{2}+\tfrac{1}{2}\int_{0}^{s}\e^{(s-\tau)B^*}(D^{1/2}S^{*}(\tau)
  u_{1})^{\otimes 2}\,\D\tau.
\end{align*}

\subsection{Forward curve dynamics and forward-start options on
  forwards}\label{sec:model-forw-curve} 

In this section we consider a concrete example of a geometric affine stochastic
covariance model on a Hilbert space which describes the dynamics of forward
curves in fixed income or commodity markets. Then, in Proposition~\ref{prop:implied-vol-forward-start} we study the
long-time behavior of the forward implied volatility smile in this model. First, we recall from~\cite{Fil01, FTT10} the class
of Hilbert spaces consisting of forward curves: for $\beta>0$ we denote by $H_{\beta}$ the space of all absolutely continuous functions $f\colon \MRplus\to\MR$ such that
$\norm{f}_{\beta}\df\big(|f(0)|^{2}+\int_{\MRplus}\E^{\beta |x|}|f'(x)|^{2}\,\D
x\big)^{1/2}<\infty$. The space $H_{\beta}$ is a separable Hilbert space when equipped with the inner product
\begin{align*}
\langle f,g \rangle_{\beta}=f(0)g(0)+\int_{\MRplus}\E^{\beta|x|}f'(x)g'(x)\,\D x.
\end{align*}
Moreover, we note that the left-shift semigroup, denoted by $(S(t))_{t\geq 0}$, is strongly continuous on $H_{\beta}$
and for every $t\in \MRplus$ the point evaluation maps $\delta_{t}\colon
H_{\beta}\to\MR$ are continuous linear functionals, see \cite[Theorem
2.1]{FTT10}. Throughout this section we shall identify the point evaluation
maps $\delta_{t}\,(t\in\MRplus)$ with an element $u_{t}\in H_{\beta}$ such that
$\langle f,u_{t}\rangle_{\beta}=\delta_{t}(f)$.

Let $0\leq T\leq \hat{T}$ and denote by
$F(T,\hat{T})$ the forward price at time $T$ with delivery/maturity date
$\hat{T}$, e.g. $F(T,\hat{T})$ denotes the price at time $T$ for the delivery
of one unit of an underlying spot commodity at time $\hat{T}$. We follow the
HJMM approach and model the dynamics of $(F(T,\hat{T}))_{T\leq \hat{T}}$ directly by means of a
(transformed) stochastic differential equation on $H_{\beta}$. Namely, we let
$(Y^{y}_{t},X_{t})_{t\geq 0}$ be an affine stochastic covariance model as
in~\eqref{eq:Y} on the Hilbert space $H_{\beta}$ and where $(S(t))_{t\geq 0}$
is the left-shift semigroup on $H_\beta$. Then for $0\leq T\leq \hat{T}$ we set 
\begin{align}\label{eq:forward-model}
F(T,\hat{T})\df\delta_{\hat{T}-T}(\exp(Y^{y}_{T}))=\exp(\langle Y^{y}_{T},u_{\hat{T}-T}\rangle_{\beta}).
\end{align}

A \emph{geometric forward curve model} of the type in~\eqref{eq:forward-model}
was proposed in~\cite{benth2021barndorff} to model the dynamics of forward
curve dynamics in commodity markets. In their case, the underlying stochastic
covariance model $(Y^{y}_t,X_t)_{t\geq 0}$ is the Hilbert valued BNS model
from \eqref{BNS} with an additional leverage term. The consideration of a
geometric model has the advantage of producing positive forward curves which
is a crucial characteristic in many forward markets. Here, we extend the
geometric Hilbert valued BNS model to the larger class of \textit{SV models
  with affine pure-jump variance} as introduced in
Section~\ref{sec:affine-sv-models-1}. Moreover, we make the assumption that we
already model under a risk-neutral measure $\tilde{\MQ}$, i.e. we assume that
$(F(T,\hat{T}))_{T\leq \hat{T}}$ is a $\tilde{\MQ}$-martingale for all
$\hat{T}\geq 0$. We refer to \cite[Proposition 6.8]{benth2021barndorff} for a sufficient condition on the function $G$ that ensures the existence of a risk-neutral measure. In the following we focus on forwards in commodity markets, see also~\cite{BK14}.\par{} 

A \textit{forward-start option} with forward-start date $\tau\geq 0$, forward
maturity $T$ and strike $\E^{K}$ written on a forward with maturity date $\hat{T}$ is defined as an
European option with pay-off at time $\tau+T$ given by 
\begin{align}\label{eq:forward-start-pay-off}
 \left(\frac{F(\tau+T,\tau+\hat{T})}{F(\tau,\tau+\hat{T})}-\E^{K}\right)^{+}. 
\end{align}
A forward-start option is a contract on the relative price difference of
a forward contract at two times, $\tau$ and $\tau+T$, in the future. In practice, it is used to price future volatility
of the underlying asset. Forward-start options are very common in commodity
forward markets and more complex derivatives such as \emph{Cliquet options}
are building up on these see, e.g. ~\cite{Cro08}. Forward-start options on
stocks are discussed in, e.g.~\cite{JR13, JR15, KN05, Kel11}. Here we restrict
ourselves to the ratio-type pay-off functions
in~\eqref{eq:forward-start-pay-off}, but similar results can be obtained for
the difference-type pay-offs, i.e. $(F(\tau+T,\tau+\hat{T})-K F(\tau,\tau+\hat{T}))^{+}$, see also~\cite{KN05}.\par{}

We now define the \emph{implied forward volatility smile} of the model~\eqref{eq:forward-model}. First, let us denote the price of a forward-start option with pay-off~\eqref{eq:forward-start-pay-off} by $C_{\mathrm{fwd}}(\tau,T,\hat{T},K)$. Then, as a reference model for the forward prices $F(T,\hat{T})$ we take \emph{Black's model}, see~\cite{Bla76}, and denote the forward prices within this model by $F^{\mathrm{B}}(T,\hat{T})$. We assume that the following spot-forward relation holds:
\begin{align}\label{eq:Black-76}
F^{\mathrm{B}}(T,\hat{T})=s_{T}\E^{r(\hat{T}-T)},\quad 0\leq T\leq\hat{T},
\end{align}
where $r\geq 0$ denotes the risk-free interest rate and $(s_{t})_{t\geq 0}$
denotes the spot price process of the underlying commodity, which is given by
a geometric Brownian motion with volatility parameter $\sigma$. We denote by
$C^{\mathrm{B}}_{\mathrm{fws}}(\tau,T,\hat{T},K,\sigma)$ the price of a
forward-start option with identical pay-off function as
in~\eqref{eq:forward-start-pay-off} in Black's model and define the
\emph{implied forward volatility smile} $\sigma(\tau,T,\hat{T},K)$ as the
unique solution to
$C^{\mathrm{B}}_{\mathrm{fws}}(\tau,T,\hat{T},K,\sigma(\tau,T,\hat{T},K))=C_{\mathrm{fwd}}(\tau,T,\hat{T},K)$. In
the following proposition we show that $\sigma(\tau,T,\hat{T},K)$ exists for all $\tau,K \geq 0$ and study its long-time behavior as $\tau\to\infty$:

\begin{proposition}\label{prop:implied-vol-forward-start}
Let $0\leq T\leq \hat{T}$ and denote by $F(T,\hat{T})$ the forward price at
time $T$ with maturity date $\hat{T}$ given by~\eqref{eq:forward-model}, where
$(Y^{y}_{t},X_{t})_{t\geq 0}$ is an affine stochastic covariance model on
$H_{\beta}$ as defined in Section~\ref{sec:affine-sv-models-1} with $(S(t))_{t\geq 0}$ the left-shift semigroup on $H_\beta$. Moreover, let $(\tilde{Y}_{t},X_{t}^{\pi})_{t\geq 0}$ be the model in the stationary covariance regime and define $\tilde{F}(T,\hat{T})\df\exp\big(\langle \tilde{Y}_{T},u_{\hat{T}-T}\rangle_\beta\big)$. Suppose we model directly under the pricing measure $\tilde{Q}$ such that $(F(T,\hat{T}))_{T\leq\hat{T}}$ is a $\tilde{Q}$-martingale for all $\hat{T}\geq 0$. Then for all $\tau,K\geq 0$ the implied forward volatility smile $\sigma(\tau,T,\hat{T},K)$ exists and we have
  \begin{align}\label{eq:implied-vol-forward-start}
    \lim_{\tau\to\infty}\sigma(\tau,T,\hat{T},K)=\tilde{\sigma}(T,\hat{T},K), 
  \end{align}
  where $\tilde{\sigma}(T,\hat{T},K)$ denotes the implied volatility of a European call option with pay-off function
  $\big(\tilde{F}(T,\hat{T})-K\big)^{+}$.
\end{proposition}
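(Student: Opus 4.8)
The plan is to reduce the forward-start option price to a plain-vanilla (European) option price under the stationary covariance regime, by exploiting the affine transform formula in the stationary regime established in Proposition~\ref{prop:affine-formula-stationary} together with the exponential convergence $p_\tau(x,\cdot)\Rightarrow\pi$ from Theorem~\ref{thm:convergence-Wasserstein}. First I would compute the conditional price of the forward-start option. Using the martingale/risk-neutral assumption and the tower property, the price $C_{\mathrm{fwd}}(\tau,T,\hat{T},K)$ is the discounted expectation of the pay-off~\eqref{eq:forward-start-pay-off}; conditioning on $\cF_\tau$ and using the Markov property of the joint process $(Y^{y}_t,X_t)_{t\ge 0}$, the ratio $F(\tau+T,\tau+\hat T)/F(\tau,\tau+\hat T)$ depends on the path only through the increment of $Y$ over $[\tau,\tau+T]$ and the covariance state $X_\tau$. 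The key observation is that by~\eqref{eq:forward-model} this ratio equals $\exp(\langle Y^{y}_{\tau+T},u_{\hat T-T}\rangle_\beta-\langle Y^{y}_{\tau},u_{\hat T}\rangle_\beta)$, and the conditional law of this log-ratio, given $X_\tau=x$, is governed by the mixed Fourier--Laplace transform~\eqref{eq:affine-formula-joint}, i.e.\ it is an exponential-affine functional of the initial covariance state $x$.

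Next I would pass to the limit $\tau\to\infty$. The conditional price can be written as $\int_{\cHplus}g(x)\,p_\tau(x_0,\D x)$ for an appropriate bounded continuous (in the weak topology) pay-off functional $g$, where $x_0=X_0$ is the initial covariance. Here the structure of Proposition~\ref{prop:affine-formula-stationary} is essential: the stationary affine transform formula~\eqref{eq:characteristic-function-stationary} is precisely what one obtains by integrating the conditional transform~\eqref{eq:affine-formula-joint} against the invariant measure $\pi$, via the identity~\eqref{eq:affine-formula-stationary-2}. By the weak convergence $p_\tau(x_0,\cdot)\Rightarrow\pi$ from~\cref{item:convergence-rates}, and provided $g\in C_b(\cHplus_{\mathrm w})$, we obtain
\begin{align*}
 \lim_{\tau\to\infty}C_{\mathrm{fwd}}(\tau,T,\hat{T},K)
  =\E^{-rT}\,\EXspec{\pi}{\big(\tilde F(T,\hat T)-\E^{K}\big)^{+}},
\end{align*}
which is exactly the discounted expectation of the plain-vanilla pay-off $(\tilde F(T,\hat T)-\E^{K})^{+}$ under the stationary regime, i.e.\ the price of a European call written on $\tilde F(T,\hat T)$. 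This is the crux: the forward-start price converges to the price of a spot-started European option in the stationary covariance regime.

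Finally I would transfer this convergence to the level of implied volatilities. The implied forward volatility $\sigma(\tau,T,\hat T,K)$ is defined implicitly through Black's formula $C^{\mathrm B}_{\mathrm{fws}}(\tau,T,\hat T,K,\sigma)=C_{\mathrm{fwd}}(\tau,T,\hat T,K)$; I would first check that Black's forward-start price in the model~\eqref{eq:Black-76} reduces, by the scaling/homogeneity of geometric Brownian motion, to a plain Black--Scholes call price depending only on $(T,K,\sigma)$ and not on $\tau$, so that the implied-volatility map $\sigma\mapsto C^{\mathrm B}$ is a fixed strictly monotone continuous bijection onto the admissible price interval. Existence of $\sigma(\tau,T,\hat T,K)$ for all $\tau,K\ge0$ then follows from verifying that $C_{\mathrm{fwd}}$ lies in this interval (no-arbitrage bounds), and by the continuity and strict monotonicity of Black's price in $\sigma$ the implied volatility is the continuous inverse of the price. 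Passing the limit through this continuous inverse and identifying the limiting price with $\tilde C=\E^{-rT}\EXspec{\pi}{(\tilde F(T,\hat T)-\E^{K})^{+}}$ yields $\lim_{\tau\to\infty}\sigma(\tau,T,\hat T,K)=\tilde\sigma(T,\hat T,K)$.

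The main obstacle I anticipate is the justification of the limit interchange in the second step: the pay-off functional $g$ is the bounded-but-not-obviously-weakly-continuous map $x\mapsto\E^{-rT}\,\mathbb E_x[(\,\cdot\,-\E^{K})^{+}]$, and one must verify that it indeed lies in $C_b(\cHplus_{\mathrm w})$ (or approximate it suitably) before invoking weak convergence, since convergence $p_\tau(x_0,\cdot)\Rightarrow\pi$ is only asserted against $C_b(\cH)$ test functions. Establishing the requisite weak continuity of the conditional option price as a function of the initial covariance state $x$—using continuity of the transform~\eqref{eq:affine-formula-joint} in $x$ together with dominated convergence and the uniform second-moment bound~\eqref{eq:exp_bound}—is the delicate point; once this is in place the remaining implied-volatility inversion is standard.
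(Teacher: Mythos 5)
Your proposal is correct and follows essentially the same route as the paper's proof: reduce Black's forward-start price to a plain Black--Scholes call with shifted strike $K'=K+rT$, use the left-shift identity $\langle S(T)Y^{y}_{\tau},u_{\hat{T}-T}\rangle_{\beta}=\langle Y^{y}_{\tau},u_{\hat{T}}\rangle_{\beta}$ together with the independent increments of $W$ and the Markov property of $(X_{t})_{t\geq 0}$ to write the model's forward-start price as the expectation of a conditional vanilla price given $X_{\tau}$, pass to the limit $\tau\to\infty$ via convergence of the law of $X_{\tau}$ to $\pi$, and invert the Black--Scholes formula. The delicate point you flag at the end is genuine but your proposed requirement is overcautious: Theorem~\ref{thm:convergence-Wasserstein}~ii) gives convergence in Wasserstein distance and hence weak convergence in the \emph{norm} topology, so norm-continuity (not weak-topology continuity) of the conditional price in $x$ suffices, while boundedness follows from the martingale assumption via $(\,\cdot\,-\E^{K})^{+}\leq(\,\cdot\,)$ --- a verification the paper itself also leaves implicit in the step from~\eqref{eq:implied-volatility-1} to~\eqref{eq:implied-volatility-2}.
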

\begin{proof}
First, we show a certain relation between the price of a European call option
$C^{\mathrm{B}}\left(T,\hat{T},K, \sigma \right)$, where $0\leq T\leq \hat{T}$
and $K\geq 0$, and the forward-start call option
$C_{\mathrm{fws}}^{\mathrm{B}}\left(\tau,T,\hat{T},K, \sigma\right)$ with
forward start date $\tau\geq 0$ in Black's model. Namely, let $\MQ$ denote the
unique risk-neutral measure in Black's model and recall $C_{\mathrm{fws}}^{\mathrm{B}}\left(\tau,T,K, \sigma\right)$ the price of a forward-start option at time zero with forward-start date $\tau$ written on the forward with maturity $\hat{T}$. Inserting ~\eqref{eq:Black-76} into the pay-off function and by risk-neutral pricing we have
    $C_{\mathrm{fws}}^{\mathrm{B}}\left(\tau,T,\hat{T},K, \sigma\right)=\E^{-r(\tau+T)}\EXspec{\MQ}{\Big(\frac{s_{\tau+T}\E^{r(\hat{T}-T)}}{s_{\tau}\E^{r\hat{T}}}-\E^{K}\Big)^{+}}.$
  It is known that in the Black-Scholes model the forward-start call option and the European call option satisfy:
  $\E^{-r(\tau+T)}\EXspec{\MQ}{\big(\frac{s_{\tau+T}}{s_{\tau}}-K\big)^{+}}=\E^{-r(\tau+T)}\EXspec{\MQ}{\big(s_{T}-K\big)^{+}}$,
  see also~\cite{Kel11}, and hence
  \begin{align*}
    C_{\mathrm{fws}}^{\mathrm{B}}\left(\tau,T,\hat{T},K,\sigma\right)&=\E^{-r(\tau+T)}\E^{-rT}\EXspec{\MQ}{\big(\frac{s_{\tau+T}}{s_{\tau}}-\E^{K}\big)^{+}}\\
                                          &=\E^{-r(\tau+T)}\E^{-rT}\EXspec{\MQ}{\big(s_{T}-\E^{K'}\big)^{+}}\\
                                           &=\E^{-r(\tau+T)}C^{\mathrm{BS}}\left(T,K',\sigma\right),
  \end{align*}
  where $K'=K+rT$ and by the superscript $\mathrm{BS}$ we indicate that the underlying model for $(s_t)_{t\geq 0}$ is the Black-Scholes model.
  From this and the definition of the implied forward volatility smile $\sigma(\tau,T,\hat{T},K)$ we have
\begin{align}
  C^{\mathrm{BS}}\left(T,K',\sigma(\tau,T,\hat{T},K)\right)&=\E^{r(\tau+T)}C_{\mathrm{fws}}^{\mathrm{B}}\left(\tau,T,\hat{T},K,\sigma(\tau,T,\hat{T},K)\right)\nonumber\\
                                                &=\E^{r(\tau+T)}C_{\mathrm{fws}}\left(\tau,T,\hat{T},K\right).    \label{eq:implied-volatility-1}
\end{align}
Next, we compute the right-hand side
of~\eqref{eq:implied-volatility-1}. Recall that for every $t\in\MR$ and $f\in
H_{\beta}$ we use the identification $\delta_{t}(f)=\langle f, u_{t}\rangle$ for the evaluation
functional with $ u_{t}\in H_{\beta}$. Moreover, we denote the expectation with respect to the pricing measure $\tilde{\MQ}$ by $\EXspec{\tilde{\MQ}}{\cdot}$ (here we suppress the initial value $x$ compared to $\MQ_{x}$ above). The payoff function of the forward-start option is given
by~\eqref{eq:forward-start-pay-off}, hence by risk-neutral pricing and inserting our model~\eqref{eq:forward-model} we have
\begin{align}\label{eq:new-eq-1}
  C_{\mathrm{fws}}\left(\tau,T,\hat{T},K\right)
  &=\E^{-r(\tau+T)}\EXspec{\tilde{\MQ}}{\left(\frac{F(\tau+T,\tau+\hat{T})}{F(\tau,\tau+\hat{T})}-\E^{K}\right)^{+}}\nonumber\\ 
  &=\E^{-r(\tau+T)}\EXspec{\tilde{\MQ}}{\left(\E^{\langle Y^{y}_{\tau+T},u_{\hat{T}-T}\rangle_\beta-\langle
    Y^{y}_{\tau},u_{\hat{T}}\rangle_\beta}-\E^{K}\right)^{+}}.
\end{align}
Note that by definition of $Y_{\tau}$ we have
\begin{align*}
\langle S(T)Y_{\tau},u_{\hat{T}-T}\rangle_{\beta}&=\langle
  S(T+\tau)y,u_{\hat{T}-T}\rangle_\beta+\langle \int_{0}^{\tau}S(T+\tau-s)G(X_s)
                                                      \D
                                                      s,u_{\hat{T}-T}\rangle_\beta\\
                                                    &\quad+\langle
                                        \int_{0}^{\tau}S(T+\tau-s)X_{s}^{1/2}\, \D
                                         W_{s},u_{\hat{T}-T}\rangle_\beta,
\end{align*}
and the left-shift $S(T)$ satisfies $\langle
S(T)Y^{y}_{\tau},u_{\hat{T}-T}\rangle_\beta =\langle Y_{\tau}^{y},
u_{\hat{T}}\rangle_\beta$, thus we conclude that
\begin{align}\label{eq:new-eq-2}
  \langle Y^{y}_{\tau+T},u_{\hat{T}-T}\rangle_\beta-\langle
  Y^{y}_{\tau},u_{\hat{T}}\rangle_\beta&=\langle \int_{\tau}^{T+\tau}S(T+\tau-s)G(X_s)\D s,u_{\hat{T}-T}\rangle_\beta\nonumber\\
                                        &\quad+\langle \int_{\tau}^{T+\tau}S(T+\tau-s)X_{s}^{1/2}\,\D W_{s},u_{\hat{T}-T}\rangle_\beta.
\end{align}
By the independent increments property and the Markov property of $(X_t)_{t\geq 0}$ the sum of the integrals
inside the inner products on the right-hand side of~\eqref{eq:new-eq-2} has
the same distribution as $Y^{0}_{T}=\int_{0}^{T}S(T-s)G(X_{\tau+s})\, \D
s+\int_{0}^{T}S(T-s)X_{\tau+s}^{1/2}\, \D W_{s}$. Hence for the expectation
on the right-hand side in~\eqref{eq:new-eq-1} we obtain
\begin{align*}
  \EXspec{\tilde{\MQ}}{\left(\E^{\langle Y^{y}_{\tau+T},u_{\hat{T}-T}\rangle_\beta-\langle
    Y^{y}_{\tau},u_{\hat{T}}\rangle_\beta}-\E^{K}\right)^{+}}=\EXspec{\tilde{\MQ}}{\CEX{\left(\E^{\langle
    Y^{0}_{T},u_{\hat{T}-T}\rangle_\beta}-\E^{K}\right)^{+}}{X_{\tau}}},
\end{align*}
and thus we conclude that the left-hand side of~\eqref{eq:implied-volatility-1} is given by
\begin{align*}
C^{\mathrm{BS}}\left(T,K',\sigma(\tau,T,\hat{T},K)\right)=\EXspec{\tilde{\MQ}}{\CEX{(\E^{\langle
  Y^{0}_{T},u_{\hat{T}-T}\rangle_\beta}-\E^{K})^{+}}{X_{\tau}}}.  
\end{align*}
Now taking the limit $\tau\to\infty$ and since $\tilde{F}(T,\hat{T})=\E^{\langle \tilde{Y}_{T},u_{\hat{T}-T}\rangle_\beta}$ we obtain
\begin{align}\label{eq:implied-volatility-2}
  \lim_{\tau\to\infty}C^{\mathrm{BS}}\left(T,K',\sigma(\tau,T,\hat{T},K)\right) &=\EXspec{\tilde{\MQ}}{(\E^{\langle \tilde{Y}_{T},u_{\hat{T}-T}\rangle_\beta}-\E^{K})^{+}}\nonumber\\
                                                                       &=\EXspec{\tilde{\MQ}}{\big(\tilde{F}(T,\hat{T})-\E^{K}\big)^{+}}.
\end{align}
The term $\EXspec{\tilde{\MQ}}{\big(\tilde{F}(T,\hat{T})-\E^{K}\big)^{+}}$ on the right-hand side of~\eqref{eq:implied-volatility-2} is precisely the price
of an European call option remunerated by $\E^{rT}$ and we have
\begin{align*}
 \lim_{\tau\to\infty}C^{\mathrm{BS}}\left(T,K',\sigma(\tau,T,\hat{T},K)\right)=C^{\mathrm{BS}}\left(T,K',\lim_{\tau\to\infty}\sigma(\tau,T,\hat{T},K)\right),  
\end{align*}
from which we conclude~\eqref{eq:implied-vol-forward-start}, since equation~\eqref{eq:implied-volatility-2}
has a unique solution in terms of the Black-Scholes implied volatility. 
\end{proof}

\begin{remark}
By Proposition~\ref{prop:implied-vol-forward-start} we can approximate
the option prices of forward-start option for large forward dates $\tau$ by
European plain-vanilla options modeled in the stationary covariance regime.
The relevance of this result becomes evident when noting that option pricing in affine
stochastic covariance models (in particular in the stationary covariance regime) can be
conducted with a reasonable computational effort. Indeed, since an affine transform formula is provided by
Proposition~\ref{prop:affine-formula-stationary}, the option prices can be
computed quasi-explicitly via Fourier inversion, see e.g. \cite{CM99, KMKV11,
  HKRS17}. In an ongoing work we study option pricing in general affine
stochastic covariance models on Hilbert spaces, derive the quasi-explicit
option pricing formulas for popular option types, e.g. European call or \emph{Spread options} in commodity forward markets, and conduct a numerical analysis of the pricing procedure.
\end{remark}

\section{Proof of the main results}\label{sec:proof-main-results}

Throughout this section we assume that $(b,B,m,\mu)$ is an admissible
parameter set according to Definition~\ref{def:admissibility}. We denote the
unique subcritical affine process associated with $(b,B,m,\mu)$, through Theorem~\ref{thm:CKK20}, by
$(X_{t})_{t\geq 0}$ and its family of transition kernels by
$(p_{t}(x,\cdot))_{t\geq 0}$. We set $P_{t}f\df\int_{\cH}f(\xi)\,p_{t}(\cdot,\D\xi)$ for
all measurable functions $f$ such that the integral exists. Recall from
\cite{cox2020affine} that the transition semigroup $(P_{t})_{t\geq 0}$ is a
generalized Feller semigroup on the space $\Bsq$, see also Appendix~\ref{sec:gener-fell-semigr}. 

\subsection{Some properties of the generalized Riccati equations
 \texorpdfstring{\eqref{eq:Riccati-phi}-\eqref{eq:Riccati-psi}}{the
   generalized Riccati equations}}\label{sec:gener-ricc-equat}

In this section we consider the long-time behavior of the solutions
$\phi(\cdot,u)$ and $\psi(\cdot,u)$ of the generalized Riccati equations
in~\eqref{eq:Riccati-phi}-\eqref{eq:Riccati-psi}. We recall from \cite[Section
3]{cox2020affine} that for every $u\in\cHplus$ there exists a unique and
global solution $\psi(\cdot,u)\in C^{1}(\MRplus,\cHplus)$
to~\eqref{eq:Riccati-psi}. Given $\psi(\cdot,u)$ we solve the first
equation~\eqref{eq:Riccati-phi} by mere integration and obtain
$\phi(\cdot,u)\in C^{1}(\MRplus,\MRplus)$ given by
$\phi(t,u)=\int_{0}^{t}F(\psi(s,u))\,\D s$. This means that we can write the affine transform
formula~\eqref{eq:affine-transform-formula} as
\begin{align*}
  \int_{\cHplus}\E^{-\langle\xi,u\rangle}\,p_{t}(x,\D\xi)=\exp
  \left(
  -\int_{0}^{t}F(\psi(s,u))\,\D s-\langle x, \psi(t,u)\rangle
  \right).
\end{align*}
Moreover, we recall that the unique solution $\psi(\cdot,u)$
to~\eqref{eq:Riccati-psi} satisfies the flow equation:
\begin{align}\label{eq:semi-flow}
\psi(t+s,u)=\psi(t,\psi(s,u)).  
\end{align}

In the next lemma we show that $F$ and $R$ are continuous functions on
$\cHplus$ and grow at most quadratically:

\begin{lemma}\label{lemma:RFcontinuous}
Let $(b,B,m,\mu)$ be an admissible parameter set according to Definition~\ref{def:admissibility} 
and let $F$ and $R$ be given by~\eqref{eq:F} and~\eqref{eq:R}, respectively. 
Then $F$ and $R$ are continuous on $\cHplus$ and for all $u\in \cH^+$ we have   
\begin{align}\label{eq:Fquadratic}
 | F(u)|\leq \left( \norm{b}+ \int_{\cH^+\setminus \{0\}}\norm{\xi}^2\,m(\D\xi)\right)\big(\norm{u}+\|u\|^2\big)\,,
\end{align}
and 
\begin{align}\label{eq:Rquadratic}
 \norm{R(u)}\leq \big( \norm{B}_{\cL(\cH)}+\norm{\mu(\cH^+\setminus \{0\})}\big)\left(\norm{u} + \|u\|^2\right) \,. 
\end{align} 
\end{lemma}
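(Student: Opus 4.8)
The plan is to establish the two growth estimates first and then deduce continuity, since the quadratic bounds essentially give equicontinuity of the integral terms for free. The key analytic input is the elementary inequality
\begin{align*}
 \bigl|\E^{-\langle \xi,u\rangle}-1+\langle \chi(\xi),u\rangle\bigr|\leq C\,\norm{\xi}^{2}\bigl(\norm{u}+\norm{u}^{2}\bigr),\qquad \xi\in\cHplus\setminus\{0\},\ u\in\cHplus,
\end{align*}
which must hold uniformly in $\xi$ after accounting for the truncation $\chi$. First I would split the domain of integration into the region $\{\norm{\xi}\le 1\}$ and $\{\norm{\xi}>1\}$. On the bounded region $\chi(\xi)=\xi$, and I would use the second-order Taylor estimate $|\E^{-a}-1+a|\le \tfrac12 a^{2}$ for $a=\langle\xi,u\rangle\ge 0$, together with Cauchy--Schwarz $|\langle\xi,u\rangle|\le\norm{\xi}\norm{u}$, to bound the integrand by $\tfrac12\norm{\xi}^{2}\norm{u}^{2}$; integrating against $m$ (respectively $\mu(\D\xi)/\norm{\xi}^{2}$) and invoking the finite second-moment condition \cref{item:m-2moment}(a) controls this piece. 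On the region $\{\norm{\xi}>1\}$ one has $\chi(\xi)=0$, so the integrand is $|\E^{-\langle\xi,u\rangle}-1|\le\langle\xi,u\rangle\le\norm{\xi}\norm{u}\le\norm{\xi}^{2}\norm{u}$ (using $\norm{\xi}>1$), which is again integrable by the same moment condition.

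Adding the two regions yields $|F(u)|\le\norm{b}\norm{u}+(\text{const})\int_{\cHpluso}\norm{\xi}^{2}m(\D\xi)\,(\norm{u}+\norm{u}^{2})$, and absorbing constants gives the stated bound \eqref{eq:Fquadratic}; the argument for $R$ is identical, with $\norm{b}$ replaced by the operator norm $\norm{B}_{\cL(\cH)}$ (from the linear term $B^{*}u$, noting $\norm{B^{*}}_{\cL(\cH)}=\norm{B}_{\cL(\cH)}$) and $\int\norm{\xi}^{2}m(\D\xi)$ replaced by the total mass $\norm{\mu(\cHpluso)}$, since the measure $\mu(\D\xi)/\norm{\xi}^{2}$ integrated against $\norm{\xi}^{2}$ returns $\mu(\cHpluso)$. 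Here I would use the fact, recorded in Remark \ref{rem:admissible-effective}, that $\mu$ is a genuine $\cHplus$-valued measure with $\norm{\mu(\cHpluso)}<\infty$, so pairing against any $u$ is controlled by $\norm{u}\,\norm{\mu(\cHpluso)}$.

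For continuity I would argue by dominated convergence. Given $u_{n}\to u$ in $\cHplus$, the integrands converge pointwise in $\xi$ (the map $u\mapsto\E^{-\langle\xi,u\rangle}-1+\langle\chi(\xi),u\rangle$ being continuous for each fixed $\xi$), and the bounds derived above furnish a uniform integrable majorant of the form $C\norm{\xi}^{2}(\sup_{n}\norm{u_{n}}+\sup_{n}\norm{u_{n}}^{2})$, valid since $(u_{n})$ is bounded; the linear terms $\langle b,u\rangle$ and $B^{*}u$ are trivially continuous. Hence $F(u_{n})\to F(u)$ and $R(u_{n})\to R(u)$.

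The main obstacle is technical rather than conceptual: one must handle the vector-valued integral defining $R$ with care, since $\mu$ takes values in $\cH$ and the estimate \eqref{eq:Rquadratic} is a norm bound on the resulting element of $\cH$. I would control $\norm{R(u)}$ by testing against unit vectors $v\in\cH$, writing $v=v^{+}-v^{-}$ as in Remark \ref{rem:admissible-effective}, so that $|\langle\mu(\D\xi),v\rangle|$ becomes a scalar positive measure dominated by $\norm{\mu(\cHpluso)}$ in total variation; this reduces every step to the scalar estimates above. Verifying that the second-order Taylor bound survives the truncation at $\norm{\xi}=1$ without losing integrability is the only place demanding genuine attention.
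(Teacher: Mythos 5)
Your proposal is correct and takes essentially the same route as the paper's proof: the identical split of the integrand at $\norm{\xi}=1$, with the Taylor bound $|\E^{-a}-1+a|\leq\tfrac{1}{2}a^{2}$ plus Cauchy--Schwarz on the small-jump region and $|\E^{-a}-1|\leq a\leq\norm{\xi}^{2}\norm{u}$ (using $\norm{\xi}>1$) on the large-jump region, followed by dominated convergence for continuity. The only difference is one of detail: you spell out the control of the $\cH$-valued integral against $\mu$ by testing against unit vectors $v=v^{+}-v^{-}$ as in Remark~\ref{rem:admissible-effective}, a step the paper compresses into ``readily follows.''
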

\begin{proof}  
Note that for all $\xi,u \in \cH^{+}$ we have
\begin{align}\label{eq:exp_est} 
 \left|\e^{ - \langle \xi, u \rangle}-1+ \langle \chi(\xi), u \rangle \right| 
  &\leq \frac{1}{2} | \langle \xi, u \rangle|^2 \one_{\{ \| \xi \| \leq 1\}} + | \langle \xi, u \rangle| \one_{\{ \| \xi \| > 1\}}
 \\ &\leq \frac{1}{2} \| \xi \|^2 \| u \|^2 \one_{\{ \| \xi \| \leq 1\}} + \| \xi\| \|u\| \one_{\{\| \xi\|>1\}}, \notag
\end{align}
from which~\eqref{eq:Fquadratic}, \eqref{eq:Rquadratic}, and the continuity of $F,R$ readily follows (by dominated convergence).
\end{proof}

Assumption~\ref{assump:subcritical-pure-jump} implies that the semigroup
$(\E^{t\hat{B}})_{t\geq 0}$ satisfies~\eqref{eq:stable-semigroup}, that is
$(\E^{t\hat{B}})_{t\geq 0}$ is uniformly exponential stable. This has
the following consequence on the solution $\psi(\cdot,u)$ of the generalized
Riccati equation~\eqref{eq:Riccati-psi}:     

\begin{lemma}\label{lem:norm-bound-psi}
Let $(b,B,m,\mu)$ be an admissible parameter set according to
Definition~\ref{def:admissibility} and for $u\in\cHplus$ let
$\psi(\cdot,u)$ be the unique solution to~\eqref{eq:Riccati-psi}. Then
\begin{align}\label{eq:norm-bound-psi}
\norm{\psi(t,u)}\leq \norm{\E^{t\hat{B}}}_{\cL(\cH)}\norm{u},\quad \forall
  t\geq 0.  
\end{align}
If moreover Assumption~\ref{assump:subcritical-pure-jump} is satisfied, then
$\lim_{t\to\infty}\psi(t,u)=0$.
\end{lemma}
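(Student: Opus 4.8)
The plan is to exploit a quasi-monotone structure hidden in the Riccati vector field $R$ and to sandwich $\psi(t,u)$ between $0$ and $\E^{t\hat B}u$ in the cone order $\leq_{\cHplus}$. First I would rewrite $R$ in terms of $\hat B$: starting from~\eqref{eq:R} and the definition of $\hat B$ in~\eqref{eq:b-B-check}, adding and subtracting $\int_{\cHplus\cap\{\norm{\xi}>1\}}\langle\xi,u\rangle\frac{\mu(\D\xi)}{\norm{\xi}^{2}}$ (which is well defined by Remark~\ref{rem:admissible-effective}) and collecting the linear terms gives
\[
 R(u) = \hat B(u) - J(u), \qquad J(u) \df \int_{\cHpluso}\big(\E^{-\langle\xi,u\rangle}-1+\langle\xi,u\rangle\big)\frac{\mu(\D\xi)}{\norm{\xi}^{2}}.
\]
The decisive observation is that for $u\in\cHplus$ the scalar integrand $\E^{-\langle\xi,u\rangle}-1+\langle\xi,u\rangle$ is nonnegative (since $\E^{-a}-1+a\geq 0$ for all $a\in\MR$ and $\langle\xi,u\rangle\geq0$), so that $J(u)\in\cHplus$ because $\mu$ is $\cHplus$-valued.

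Next I would record that $(\E^{t\hat B})_{t\geq0}$ is positivity preserving, i.e.\ $\E^{t\hat B}\cHplus\subseteq\cHplus$. This follows from quasi-monotonicity of $\hat B$ with respect to $\cHplus$: combining \cref{item:linear-operator} with the definition of $\hat B$ yields $\langle \hat B(u),x\rangle \geq \int_{\cHpluso}\langle\xi,u\rangle\frac{\langle\mu(\D\xi),x\rangle}{\norm{\xi}^{2}}\geq 0$ for all $u,x\in\cHplus$ with $\langle u,x\rangle=0$, which is precisely the sub-tangential condition characterizing positive uniformly continuous semigroups on the self-dual cone $\cHplus$.

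With these two facts the core of the argument is a variation-of-constants representation for~\eqref{eq:Riccati-psi}. Writing the equation as $\psi'=\hat B\psi - J(\psi)$ and using that $\psi(\cdot,u)\in C^1(\MRplus,\cHplus)$, Duhamel's formula gives
\[
 \psi(t,u) = \E^{t\hat B}u - \int_0^t \E^{(t-s)\hat B}J(\psi(s,u))\,\D s.
\]
Since $\psi(s,u)\in\cHplus$ we have $J(\psi(s,u))\in\cHplus$, and positivity of $\E^{(t-s)\hat B}$ makes the integral an element of $\cHplus$; hence $0\leq_{\cHplus}\psi(t,u)\leq_{\cHplus}\E^{t\hat B}u$. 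I would then convert this order sandwich into the norm bound~\eqref{eq:norm-bound-psi} via monotonicity of the trace norm on the cone: if $0\leq_{\cHplus}A\leq_{\cHplus}B$, then $\Tr(A^2)\leq\Tr(AB)\leq\Tr(B^2)$ (each inequality being the trace of a product of two positive operators applied to the pairs $(A,B-A)$ and $(B-A,B)$), so $\norm{A}\leq\norm{B}$. Applied here this yields $\norm{\psi(t,u)}\leq\norm{\E^{t\hat B}u}\leq\norm{\E^{t\hat B}}_{\cL(\cH)}\norm{u}$. The final assertion is then immediate: under Assumption~\ref{assump:subcritical-pure-jump}, estimate~\eqref{eq:stable-semigroup} gives $\norm{\psi(t,u)}\leq M\E^{-\delta t}\norm{u}\to0$ as $t\to\infty$.

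The step I expect to be most delicate is the passage from the cone order to the norm, that is, establishing monotonicity of the Hilbert--Schmidt norm on $\cHplus$, together with the justification that $\E^{t\hat B}$ is genuinely positivity preserving so that the Duhamel remainder lands in the cone. Once the decomposition $R=\hat B - J$ is in place, the algebraic rewriting and the sign of the integrand are routine.
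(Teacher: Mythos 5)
Your proof is correct, but it takes a genuinely different route from the paper's. The paper argues probabilistically: since $(0,B,0,\mu)$ is again an admissible parameter set, Theorem~\ref{thm:CKK20} provides an affine process $(Y_{t})_{t\geq 0}$ whose Laplace transform is $\exp(-\langle \psi(t,u),x\rangle)$ (the $\phi$-part vanishes because $b=0$, $m=0$); Jensen's inequality then gives $\langle \psi(t,u),x\rangle \leq \EXspec{x}{\langle u,Y_{t}\rangle}$, the first-moment formula~\eqref{eq:explicit-Xv} (with $\hat{b}=0$) identifies the right-hand side as $\langle x,\E^{t\hat{B}}u\rangle$, and the clever choice $x=\psi(t,u)$ yields~\eqref{eq:norm-bound-psi} after cancelling one factor of $\norm{\psi(t,u)}$. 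Your argument instead stays entirely at the level of the Riccati equation: the decomposition $R=\hat{B}-J$ is exact (on $\{\norm{\xi}>1\}$ the compensating term in $\hat{B}$ upgrades $\chi(\xi)$ to $\xi$ in the integrand, as you say), $J(u)\in\cHplus$ follows from self-duality of the cone since $\langle \mu(\cdot),x\rangle$ is a positive measure for $x\in\cHplus$, and positivity of $\E^{t\hat{B}}$ via cross-positivity of $\hat{B}$ is precisely the \cite{LV98}-type equivalence the paper itself invokes in Example~\ref{ex:OU} and uses without comment in the proof of Lemma~\ref{lem:uniform-growth-bound-semigroup}; your trace argument for norm monotonicity on the cone is also sound, since $\langle A,B-A\rangle\geq 0$ and $\langle B-A,B\rangle \geq 0$ by self-duality. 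It is worth noticing that the two proofs are closer than they look at the decisive step: the paper's Jensen bound $\langle\psi(t,u),x\rangle\leq\langle x,\E^{t\hat{B}}u\rangle$ for all $x\in\cHplus$ is, by self-duality, exactly your cone sandwich $\psi(t,u)\leq_{\cHplus}\E^{t\hat{B}}u$, and its substitution $x=\psi(t,u)$ is the same computation as your $\Tr(A(B-A))\geq 0$. What your route buys is independence from the probabilistic machinery (no appeal to the existence theorem for $(0,B,0,\mu)$ or to the moment formula) plus the stronger order-theoretic conclusion; what the paper's route buys is brevity, since Duhamel's formula and the positivity of the semigroup need no separate justification there.
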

\begin{proof}
  First note that whenever $(b,B,m,\mu)$ is an admissible parameter set
  according to Definition~\ref{def:admissibility}, then so is $(0,B,0,\mu)$. Therefore the existence of an affine Markov process $(Y_{t})_{t\geq 0}$
  associated to $(0,B,0,\mu)$ and initial value $Y_{0}=x$ is guaranteed by
  Theorem~\ref{thm:CKK20}. Note that the unique solution $\psi(\cdot,u)$
  to~\eqref{eq:Riccati-psi} is $\cHplus$-valued. Let $u\in\cHplus$ and note
  that due to the convexity of the exponential function and Jensen's inequality we have $\E^{-\EXspec{x}{\langle u, Y_{t} \rangle}}\leq \EXspec{x}{\E^{-\langle u,
    Y_{t}\rangle}} = \exp(-\langle \psi(t,u),x\rangle)$. Hence we find that for all $u,x\in\cHplus$:
  \begin{align*}
   \langle \psi(t,u),x\rangle \leq \EXspec{x}{\langle u, Y_{t}\rangle}
   &= \langle x, \E^{t\hat{B}}u\rangle
   \\ &\leq \norm{x}\norm{u}\norm{\E^{t\hat{B}}}_{\cL(\cH)}.
  \end{align*}
  For fixed $t\geq 0$ and $u\in\cHplus$ we choose $x=\psi(t,u) \in \mathcal{H}^+$ and obtain $\norm{\psi(t,u)}^{2}\leq
  \norm{\psi(t,u)}\norm{u} \norm{\E^{t \hat{B}}}_{\cL(\cH)}$, which proves the
  first statement. If Assumption~\ref{assump:subcritical-pure-jump} is
  satisfied, then from~\eqref{eq:norm-bound-psi}
  and~\eqref{eq:stable-semigroup} it follows that $\norm{\psi(t,u)}\leq
  M\E^{-\delta t}\norm{u}$ and hence $\lim_{t\to\infty}\psi(t,u)=0$.
\end{proof}
\subsection{Invariant measure for affine processes on
  \texorpdfstring{$\cHplus$}{the cone of positive self-adjoint Hilbert-Schmidt
    operators}}\label{sec:invar-distr-affine}
For two measures $\nu_{1},\nu_{2}\in\cM(\cHplus)$ we denote the convolution of
$\nu_{1}$ and $\nu_{2}$ by $\nu_{1}*\nu_{2}$. In the following lemma we give
an important convolution property of the transition kernels $p_{t}(x,\cdot)$.

\begin{lemma}\label{lem:convolution-homogeneous}
Let $(Y_{t})_{t\geq 0}$ be the unique affine process associated with the
admissible parameter set $(0,B,0,\mu)$ and denote its transition kernels by
$(q_{t}(x,\cdot))_{t\geq 0}$. Then for every $t\geq 0$ and $x\in\cHplus$ we have
\begin{align}\label{eq:convolution-homogeneous}
p_{t}(x,\cdot)=p_{t}(0,\cdot)*q_{t}(x,\cdot).  
\end{align}
\end{lemma}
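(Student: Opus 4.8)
The plan is to identify the two sides of \eqref{eq:convolution-homogeneous} through their Laplace transforms and then invoke the injectivity of the Laplace transform on the cone $\cHplus$. The crucial structural observation is that the functions $F$ and $R$ from \eqref{eq:F}--\eqref{eq:R} decouple with respect to the admissible parameters: $F$ depends only on $(b,m)$, whereas $R$ depends only on $(B,\mu)$. Consequently, passing from $(b,B,m,\mu)$ to $(0,B,0,\mu)$ leaves $R$ unchanged, so that the generalized Riccati equation \eqref{eq:Riccati-psi} and hence its unique global $\cHplus$-valued solution $\psi(\cdot,u)$ are \emph{identical} for both parameter sets; at the same time the $F$ associated with $(0,B,0,\mu)$ vanishes identically, whence its $\phi$-component is zero.

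First I would record the relevant Laplace transforms. By Theorem~\ref{thm:CKK20}, together with the representation $\phi(t,u)=\int_0^t F(\psi(s,u))\,\D s$, for all $u\in\cHplus$ and $t\geq 0$ we have
\[
\int_{\cHplus}\E^{-\langle\xi,u\rangle}\,p_t(x,\D\xi)=\E^{-\phi(t,u)-\langle x,\psi(t,u)\rangle}.
\]
Specializing to $x=0$ gives $\int_{\cHplus}\E^{-\langle\xi,u\rangle}\,p_t(0,\D\xi)=\E^{-\phi(t,u)}$. For the process $(Y_t)_{t\geq 0}$ associated with $(0,B,0,\mu)$, the vanishing of its $F$ together with the common solution $\psi$ noted above yields
\[
\int_{\cHplus}\E^{-\langle\xi,u\rangle}\,q_t(x,\D\xi)=\E^{-\langle x,\psi(t,u)\rangle}.
\]

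Next I would use the elementary fact that the Laplace transform turns convolution into a product. Since $\cHplus+\cHplus\subseteq\cHplus$ and $\langle\xi+\eta,u\rangle=\langle\xi,u\rangle+\langle\eta,u\rangle$, Fubini's theorem gives, for every $u\in\cHplus$,
\[
\int_{\cHplus}\E^{-\langle\xi,u\rangle}\,\big(p_t(0,\cdot)*q_t(x,\cdot)\big)(\D\xi)=\E^{-\phi(t,u)}\,\E^{-\langle x,\psi(t,u)\rangle}=\E^{-\phi(t,u)-\langle x,\psi(t,u)\rangle}.
\]
Thus $p_t(x,\cdot)$ and $p_t(0,\cdot)*q_t(x,\cdot)$ possess the same Laplace transform on $\cHplus$.

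Finally I would conclude equality of the two probability measures from the injectivity of the Laplace transform on the cone $\cHplus$: a finite Borel measure on $\cHplus$ is uniquely determined by the map $u\mapsto\int_{\cHplus}\E^{-\langle\xi,u\rangle}\,\nu(\D\xi)$, $u\in\cHplus$. I expect this uniqueness to be the main (and essentially the only non-formal) obstacle, since, in contrast to the Fourier transform, the Laplace transform on an infinite-dimensional cone requires justification. I would either invoke the corresponding uniqueness statement already underlying the construction in \cite{cox2020affine}, or argue via analytic extension in the parameter $u$ combined with the fact, recorded earlier, that the Borel $\sigma$-algebras of the strong and weak topologies coincide on $\cHplus$, i.e.\ $\cB(\cHplus)=\cB(\cHplus_{\mathrm{w}})$.
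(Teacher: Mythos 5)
Your proposal is correct and follows essentially the same route as the paper: the vanishing of $F$ for $(0,B,0,\mu)$ forces $\phi\equiv 0$, the Laplace transform factorizes over the convolution via Fubini, and equality of measures follows because the functions $x\mapsto \E^{-\langle u,x\rangle}$, $u\in\cHplus$, are measure-determining --- precisely the uniqueness the paper invokes by citing Lemma A.1 of the construction paper, so no separate analytic-extension argument is needed. Your only genuine addition is making explicit that $\psi(\cdot,u)$ coincides for both parameter sets because $R$ depends only on $(B,\mu)$, a point the paper's proof uses silently.
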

\begin{proof}
  Since $b=0$ and $m=0$ the function $F$ in \eqref{eq:Riccati-phi} vanishes,
  see also~\eqref{eq:F}, and thus $\phi(t,u)=0$ for all 
  $t\geq 0$. Hence for every $t\geq 0$ the affine-transform
  formula~\eqref{eq:affine-transform-formula} for $Y_{t}$ takes the form 
  \begin{align}
   \int_{\cHplus}\E^{-\langle u, \xi\rangle}\,q_{t}(x,\D\xi)=\exp\big(-\langle
    \psi(t,u),x\rangle\big),\quad \text{for }u\in\cHplus.    
  \end{align}
  Now, let $(X_{t})_{t\geq 0}$ denote the unique affine process associated with
  the admissible parameter set $(b,B,m,\mu)$ and denote its transition kernels
  by $p_{t}(x,\cdot)$. Let $t\geq 0$ arbitrary and $u\in\cHplus$, then
  \begin{align*}
   \int_{\cHplus}\E^{-\langle u,\xi
    \rangle}\,p_{t}(0,\cdot)*q_{t}(x,\cdot)(\D\xi)&=\int_{\cHplus}\Big(\int_{\cHplus}\E^{-\langle
                                                    u,\xi_{1}+\xi_{2}\rangle}\,p_{t}(0,\D\xi_{1})\Big)\,q_{t}(x,\D\xi_{2})\\
                                                  &=\E^{-\phi(t,u)}\int_{\cHplus}\E^{-\langle
                                                    u,\xi_{2}\rangle}\,q_{t}(x,\D\xi_{2})\\
                                                  &=\E^{-\phi(t,u)}\E^{-\langle
                                                    \psi(t,u),x\rangle},
  \end{align*}
  which completes the proof thanks to~\eqref{eq:affine-transform-formula}
  and the fact that the functions $x\mapsto \E^{-\langle u,x\rangle}$
  characterize measures, see \cite[Lemma A.1]{cox2021infinitedimensional}.
\end{proof}

In the next proposition we show that the Laplace transform of a subcritical
affine process converges pointwise as the time $t$ tends to infinity. 

\begin{proposition}\label{prop:limit-Laplace-transform}
  Let $(X_{t})_{t\geq}$ be an affine process associated with the admissible
  parameter set $(b,B,m,\mu)$ satisfying Assumption~\ref{assump:subcritical-pure-jump}.
  Then for all $u\in\cHplus$ and for all $x\in\cHplus$ we have
 \begin{align}\label{eq:limit-Laplace-transform-1}
  \lim_{t\to\infty}\EXspec{x}{\E^{-\langle u, X_{t}\rangle}}=\exp\left(
   -\int_{0}^{\infty}F(\psi(s,u))\,\D s\right)\in [0,\infty). 
 \end{align}
\end{proposition}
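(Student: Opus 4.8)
The plan is to establish the convergence of the Laplace transform $\EXspec{x}{\E^{-\langle u, X_{t}\rangle}} = \exp\big(-\int_{0}^{t}F(\psi(s,u))\,\D s - \langle x,\psi(t,u)\rangle\big)$ by controlling the two terms in the exponent separately. For the linear term $\langle x,\psi(t,u)\rangle$, I would immediately invoke \cref{lem:norm-bound-psi}, which under \cref{assump:subcritical-pure-jump} gives $\lim_{t\to\infty}\psi(t,u)=0$ with the explicit bound $\norm{\psi(t,u)}\leq M\E^{-\delta t}\norm{u}$; hence $\langle x,\psi(t,u)\rangle\to 0$ for every fixed $x\in\cHplus$. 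The substance of the proof therefore lies in showing that the improper integral $\int_{0}^{\infty}F(\psi(s,u))\,\D s$ converges.

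\medskip

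To handle the integral, the key observation is to combine the quadratic growth bound for $F$ from \cref{lemma:RFcontinuous}, namely $|F(u)|\leq c_{F}\big(\norm{u}+\norm{u}^{2}\big)$ with $c_{F}=\norm{b}+\int_{\cHpluso}\norm{\xi}^{2}\,m(\D\xi)$, with the exponential decay of $\psi(s,u)$ along the flow. First I would bound $|F(\psi(s,u))|\leq c_{F}\big(\norm{\psi(s,u)}+\norm{\psi(s,u)}^{2}\big)$, and then substitute the estimate $\norm{\psi(s,u)}\leq M\E^{-\delta s}\norm{u}$ from \cref{lem:norm-bound-psi}. This yields
\begin{align*}
  \int_{0}^{\infty}|F(\psi(s,u))|\,\D s
  \leq c_{F}\int_{0}^{\infty}\Big(M\E^{-\delta s}\norm{u}+M^{2}\E^{-2\delta s}\norm{u}^{2}\Big)\,\D s
  = c_{F}\Big(\frac{M\norm{u}}{\delta}+\frac{M^{2}\norm{u}^{2}}{2\delta}\Big)<\infty,
\end{align*}
so the integral converges absolutely. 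Since the integrand $s\mapsto F(\psi(s,u))$ is continuous (as $F$ is continuous by \cref{lemma:RFcontinuous} and $\psi(\cdot,u)\in C^{1}(\MRplus,\cHplus)$), the improper Riemann integral $\int_{0}^{t}F(\psi(s,u))\,\D s$ converges to a finite limit as $t\to\infty$.

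\medskip

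Putting the two pieces together, the exponent $-\int_{0}^{t}F(\psi(s,u))\,\D s-\langle x,\psi(t,u)\rangle$ converges to the finite limit $-\int_{0}^{\infty}F(\psi(s,u))\,\D s$, and by continuity of the exponential map we obtain
\begin{align*}
  \lim_{t\to\infty}\EXspec{x}{\E^{-\langle u, X_{t}\rangle}}
  =\exp\Big(-\int_{0}^{\infty}F(\psi(s,u))\,\D s\Big).
\end{align*}
Finally, since $F(\psi(s,u))$ is real-valued and the integral is finite, the right-hand side lies in $(0,\infty)$, and in particular in $[0,\infty)$ as asserted. The main (and essentially only) obstacle is the absolute integrability of $F\circ\psi$, which is precisely where subcriticality enters through the uniform exponential decay in \cref{lem:norm-bound-psi}; the matching quadratic growth of $F$ from \cref{lemma:RFcontinuous} ensures that this decay is fast enough to control both the linear and quadratic contributions. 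All other steps are immediate consequences of the explicit form of the affine transform formula \eqref{eq:affine-transform-formula} and the continuity of the exponential.
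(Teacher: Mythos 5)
Your proof is correct and follows essentially the same route as the paper: both arguments combine the quadratic growth bound on $F$ from Lemma~\ref{lemma:RFcontinuous} with the exponential decay $\norm{\psi(s,u)}\leq M\E^{-\delta s}\norm{u}$ from Lemma~\ref{lem:norm-bound-psi} to obtain absolute integrability of $s\mapsto F(\psi(s,u))$, and then conclude via continuity of the exponential together with $\langle x,\psi(t,u)\rangle\to 0$. The only (cosmetic) difference is that you keep the linear and quadratic decay rates $\E^{-\delta s}$ and $\E^{-2\delta s}$ separate, whereas the paper bounds both by $CM^{2}\E^{-\delta s}(\norm{u}+\norm{u}^{2})$; your conclusion that the limit lies in $(0,\infty)$ is also a valid sharpening of the stated $[0,\infty)$.
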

\begin{proof}
  Let $u\in\cHplus$ and $x\in\cHplus$, then by Lemma~\ref{lem:norm-bound-psi} and \eqref{eq:stable-semigroup} we have
\begin{align*}
  |\langle \psi(t,u),x\rangle|\leq \norm{\psi(t,u)}\norm{x}\leq
                                \norm{\E^{t\hat{B}}}_{\cL(\cH)}\norm{x}\norm{u}\leq M\E^{-\delta t} \norm{x}\norm{u} . 
\end{align*}
Lemma~\ref{lemma:RFcontinuous} gives
\begin{align}\label{eq:F integrability estimate}
 |F(\psi(t,u))|\leq C \left(\norm{\psi(t,u)}+\norm{\psi(t,u)}^{2}\right)\leq C M^{2}\E^{-\delta s}(\norm{u}+\norm{u}^{2}),
\end{align}
with $C=\norm{b}+ \int_{\cH^+\setminus \{0\}}\norm{\xi}^2\,m(\D\xi)$. For
every $u\in\cHplus$ this implies
\begin{align*}
\int_{0}^{\infty}|F(\psi(s,u))|\,\D s\leq \frac{C M^{2}}{\delta}(\norm{u}+\norm{u}^{2})<\infty,  
\end{align*}
and hence the limit
$\lim_{t\to\infty}\phi(t,u)=\int_{0}^{\infty}F(\psi(s,u))\,\D s$
exists for every $u\in\cHplus$. This, the continuity of the
exponential function and the fact that by Lemma~\ref{lem:norm-bound-psi}
$\langle \psi(t,u),x\rangle\to 0$ for all $x,u\in\cHplus$ as $t\to\infty$ imply~\eqref{eq:limit-Laplace-transform-1}.
\end{proof}

The next lemma asserts uniform boundedness in time of the transition semigroup
$(P_{t})_{t\geq 0}$ in the operator norm on $\Bsq$. 

\begin{lemma}\label{lem:uniform-growth-bound-semigroup}
  Let $(X_{t})_{t\geq}$ be an affine process associated with the admissible
  parameter set $(b,B,m,\mu)$ satisfying
  Assumption~\ref{assump:subcritical-pure-jump} and denote its transition
  semigroup by $(P_{t})_{t\geq 0}$. Then we have
  \begin{align}\label{eq:uniform-growth-bound-semigroup}
   \sup_{t\geq 0}\norm{P_{t}}_{\cL(\Bsq)}<\infty. 
  \end{align}
\end{lemma}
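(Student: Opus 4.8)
The plan is to reduce the operator-norm bound to a uniform-in-time second moment estimate, and then to extract the latter from the explicit second moment formula in Proposition~\ref{prop:CKK20}. Since $|f(y)|\le\norm{f}_{\rho}\rho(y)$ pointwise for every $f\in\Bsq$, and since $(P_{t})_{t\geq 0}$ is positive and conservative (so $P_{t}\one=\one$), we have $|P_{t}f(x)|\le\norm{f}_{\rho}P_{t}\rho(x)$ and therefore
\[
\norm{P_{t}}_{\cL(\Bsq)}\le\sup_{x\in\cHplus}\frac{P_{t}\rho(x)}{\rho(x)}=\sup_{x\in\cHplus}\frac{1+\EXspec{x}{\norm{X_{t}}^{2}}}{1+\norm{x}^{2}}.
\]
It thus suffices to prove an estimate of the form $\EXspec{x}{\norm{X_{t}}^{2}}\le C(1+\norm{x}^{2})$ with $C$ independent of $t\ge 0$ and $x\in\cHplus$; finiteness of the left-hand side for each fixed $t$ is already guaranteed by~\eqref{eq:exp_bound}, which legitimises the manipulations below.

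To obtain such a bound I would fix an orthonormal basis $(e_{n})_{n\in\MN}$ of $\cH$ and write $\EXspec{x}{\norm{X_{t}}^{2}}=\sum_{n}\EXspec{x}{\langle X_{t},e_{n}\rangle^{2}}$, the interchange of sum and expectation being justified by Tonelli. Each summand is given by~\eqref{eq:explicit-Xv-square} with $v=w=e_{n}$, and I would estimate the four resulting contributions separately, in every case using the exponential stability $\norm{\E^{s\hat{B}}}_{\cL(\cH)}\le M\E^{-\delta s}$ from~\eqref{eq:stable-semigroup} together with $\sum_{n}\langle\E^{s\hat{B}^{*}}\xi,e_{n}\rangle^{2}=\norm{\E^{s\hat{B}^{*}}\xi}^{2}\le M^{2}\E^{-2\delta s}\norm{\xi}^{2}$. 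The first term contributes $\sum_{n}\EXspec{x}{\langle X_{t},e_{n}\rangle}^{2}=\norm{\EXspec{x}{X_{t}}}^{2}$, and by~\eqref{eq:explicit-Xv} and exponential stability $\norm{\EXspec{x}{X_{t}}}\le M\delta^{-1}\norm{\hat{b}}+M\norm{x}$, which is $O(1+\norm{x}^{2})$ uniformly in $t$. The second term becomes $\int_{0}^{t}\int_{\cHpluso}\norm{\E^{s\hat{B}^{*}}\xi}^{2}\,m(\D\xi)\,\D s\le\frac{M^{2}}{2\delta}\int_{\cHpluso}\norm{\xi}^{2}\,m(\D\xi)$, finite by \cref{item:m-2moment}(a) and independent of both $t$ and $x$.

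For the two remaining terms, which involve the $\cHplus$-valued measure $\mu$, I would first bound, for each $s$, the $\cH$-valued integral $z_{s}\df\int_{\cHpluso}\norm{\xi}^{-2}\norm{\E^{s\hat{B}^{*}}\xi}^{2}\,\mu(\D\xi)$ in norm. Using positivity of $\mu$ together with the decomposition $y=y^{+}-y^{-}$ for $y\in\cH$ (as in Remark~\ref{rem:admissible-effective}, where $\langle\mu(\D\xi),y^{\pm}\rangle\ge 0$ and $\norm{y^{+}}+\norm{y^{-}}\le\sqrt{2}$ by orthogonality of the positive and negative parts), one obtains $\norm{z_{s}}\le\sqrt{2}\,M^{2}\E^{-2\delta s}\norm{\mu(\cHpluso)}$, the factor $\norm{\mu(\cHpluso)}$ being finite again by Remark~\ref{rem:admissible-effective}. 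Plugging this into the third and fourth terms of~\eqref{eq:explicit-Xv-square} and using $\norm{\E^{\cdot\hat{B}}}_{\cL(\cH)}\le M\E^{-\delta\cdot}$ reduces them to the elementary convolution integrals $\int_{0}^{t}\int_{0}^{s}\E^{-\delta(s-u)}\E^{-2\delta u}\,\D u\,\D s\le\delta^{-2}$ and $\norm{x}\int_{0}^{t}\E^{-\delta(t-s)}\E^{-2\delta s}\,\D s\le\delta^{-1}\norm{x}$, respectively; both are $O(1+\norm{x}^{2})$ uniformly in $t$. Summing the four contributions yields $\EXspec{x}{\norm{X_{t}}^{2}}\le C(1+\norm{x}^{2})$ with $C$ depending only on $M,\delta,\norm{\hat{b}},\int_{\cHpluso}\norm{\xi}^{2}\,m(\D\xi)$ and $\norm{\mu(\cHpluso)}$, whence~\eqref{eq:uniform-growth-bound-semigroup} follows. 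I expect the main technical obstacle to be the careful treatment of the vector-valued $\mu$-integrals in the last two terms: making sense of $z_{s}$ as an element of $\cH$, bounding its norm via positivity of $\mu$, and justifying the exchange of the basis summation with the time- and $\mu$-integrations.
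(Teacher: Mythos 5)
Your proposal is correct and takes essentially the same route as the paper's proof: reduce the operator-norm bound to a uniform-in-$t$ bound on $\norm{P_{t}\rho}_{\Bsq}$, expand $\EXspec{x}{\norm{X_{t}}^{2}}$ via Parseval and the second-moment formula~\eqref{eq:explicit-Xv-square} with $v=w=e_{n}$, and control all four terms through the exponential stability~\eqref{eq:stable-semigroup}, the second moment of $m$, and $\norm{\mu(\cHpluso)}$. The only (harmless, purely bookkeeping) deviation is in the $\mu$-terms: you bound the vector-valued integral $z_{s}$ in norm via the positive/negative-part duality of Remark~\ref{rem:admissible-effective} (picking up a factor $\sqrt{2}$), whereas the paper keeps the pairings inside and uses directly that $\langle \hat{b},\E^{(s-u)\hat{B}}\mu(\D\xi)\rangle$ and $\langle x,\E^{(t-s)\hat{B}}\mu(\D\xi)\rangle$ are positive scalar measures, since $\hat{b},x\in\cHplus$ and $\E^{t\hat{B}}$ preserves the cone --- which also supplies the Tonelli justification for the sum--integral interchanges you flag as the main technical obstacle.
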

\begin{proof}
  Recall that $\rho(x)=1+\norm{x}^{2}$ and note that for every $f\in\Bsq$ we have $|f(y)| \leq \rho(y) \| f\|_{\mathcal{B}_{\rho}}$ and hence
  \begin{align*}
    \norm{P_{t}f}_{\Bsq} = \sup_{x\in\cHplus}\rho(x)^{-1}\left|\int_{\mathcal{H}^+}f(y)p_t(x,\D y)\right|
    \leq \norm{f}_{\Bsq}\norm{P_{t}\rho}_{\Bsq}, 
  \end{align*}
  which yields $\sup_{t\geq 0}\norm{P_{t}}_{\cL(\Bsq)}\leq \sup_{t\geq 0}\norm{P_{t}\rho}_{\Bsq}$.
  Let $(e_{i})_{i\in\MN}$ be an orthonormal basis of $\cH$ and recall that by
  \cite[Remark 4.6]{cox2020affine} we have $P_{t}\rho(x)=\EXspec{x}{\rho(X_{t})}$ for all $t\geq 0$.
  Hence by Parseval's identity we conclude 
  \begin{align*}
    0\leq
    P_{t}\rho(x)=1+\EXspec{x}{\norm{X_{t}}^{2}}=1+\sum_{i=1}^{\infty}\EXspec{x}{\langle
    X_{t},e_{i}\rangle^{2}}.  
  \end{align*}
  Using~\eqref{eq:explicit-Xv-square} with $v=w=e_{i}$ for $i\in\MN$ we find
  \begin{align*}
\EXspec{x}{\langle X_{t}, e_{i}\rangle^{2}}&= \Big( 
        \int_0^t \langle\hat{b},\E^{s \hat{B}} e_{i}\rangle\,\D s 
        + 
        \Big\langle x,
        \E^{t \hat{B}}e_{i}\Big\rangle\Big)^{2}+ \int_0^t\int_{\cHpluso}\langle \xi, \E^{s \hat{B}} e_{i}\rangle^{2}\,m(\D\xi)\D s
\nonumber \\ & \quad 
    +  \int_0^t \int_{0}^{s} 
        \Big\langle\hat{b},\E^{(s-u)\hat{B}}\int_{\cHpluso}\langle \xi,\E^{u \hat{B}}e_{i}\rangle^{2}\,\frac{\mu(\D\xi)}{\norm{\xi}^{2}}\Big\rangle\,\D u\, \D s
\nonumber\\ & \quad 
    + \int_{0}^{t}
        \Big\langle x,\E^{(t-s)\hat{B}}\int_{\cHpluso}\langle\xi,\E^{s
              \hat{B}}e_{i}\rangle^{2}\,\frac{\mu(\D\xi)}{\norm{\xi}^{2}}
              \Big\rangle\,\D s, 
 \end{align*}
 and hence
 \begin{align}
  \sum_{i=1}^{\infty}\EXspec{x}{\langle X_{t},e_{i}\rangle^{2}}&\leq
                                                                 2\norm{\int_{0}^{t}
                                                                 \E^{s\hat{B}^{*}}\hat{b}\,\D
                                                                 s}^{2}+2\norm{\E^{t\hat{B}^{*}}x}^{2}\label{eq:uniform-growth-bound-semigroup-1}\\
   &\quad +\int_{0}^{t}\int_{\cHpluso}\norm{\E^{s\hat{B}^{*}}\xi}^{2}\,m(\D\xi)\,\D
     s\label{eq:uniform-growth-bound-semigroup-2}\\
   &\quad +\int_{0}^{t}\int_{0}^{s}\int_{\cHpluso}\norm{\E^{u\hat{B}^{*}}\xi}^{2}\big\langle
     \hat{b},\E^{(s-u)\hat{B}}\frac{\mu(\D\xi)}{\norm{\xi}^{2}}\big\rangle\D
     u\D s\label{eq:uniform-growth-bound-semigroup-3}\\
   &\quad +\int_{0}^{t}\int_{\cHpluso}
     \norm{\E^{s\hat{B}^{*}}\xi}^{2}\big\langle x,
     \E^{(t-s)\hat{B}}\,\frac{\mu(\D\xi)}{\norm{\xi}^{2}}\big\rangle\,\D
     s.\label{eq:uniform-growth-bound-semigroup-4}
 \end{align}
 In the following we show that every term on the right-hand side
 of~\eqref{eq:uniform-growth-bound-semigroup-1}-\eqref{eq:uniform-growth-bound-semigroup-4}
 converges as $t\to\infty$ uniformly in $x$, which then
 yields~\eqref{eq:uniform-growth-bound-semigroup}. 
 
 Note first that the adjoint
 semigroup $(\E^{t\hat{B}^{*}})_{t\geq 0}$ generated by $\hat{B}^{*}$, the adjoint of $\hat{B}$, is also uniformly stable as $\norm{\E^{t\hat{B}}}_{\cL(\cH)}=\norm{\E^{t\hat{B}*}}_{\cL(\cH)}$ for all $t\geq 0$. For the first term on the right-hand side of~\eqref{eq:uniform-growth-bound-semigroup-1} we have
 $\int_{0}^{t}\left\| \E^{s\hat{B}^{*}}\hat{b}\right\| \,\D s \leq \frac{M}{\delta}\norm{\hat{b}}$. The second term in~\eqref{eq:uniform-growth-bound-semigroup-1} vanishes as $t\to\infty$, since $(\E^{t\hat{B}^{*}})_{t\geq 0}$ is uniformly stable.
 Note that $s\mapsto M^{2} \E^{-2\delta s}\int_{\cHpluso}\norm{\xi}^{2}\,m(\D\xi)$ is
 an integrable majorant for the term
 in~\eqref{eq:uniform-growth-bound-semigroup-2} and thus the integral converges for $t\to\infty$.
 For \eqref{eq:uniform-growth-bound-semigroup-3} note
 that $\langle
 \hat{b},\E^{(s-u)\hat{B}}\,\frac{\mu(\D\xi)}{\norm{\xi}^{2}}\big\rangle\geq
 0$ for every $s,u\in\MRplus$, which follows from the admissible parameter conditions, which imply that $\hat{b}\in\cHplus$ and $\E^{(s-u)\hat{B}}(\cHplus)\subseteq \cHplus$, whenever $s\geq u$. Hence we have 
 \begin{align*}
   &\int_{0}^{\infty}\int_{0}^{s}\int_{\cHpluso}\norm{\E^{u\hat{B}^{*}}\xi}^{2}\big\langle
     \hat{b},\E^{(s-u)\hat{B}}\,\frac{\mu(\D\xi)}{\norm{\xi}^{2}}\big\rangle\,\D
     u\,\D s\leq \frac{3}{2}\frac{M^{3}}{\delta^{2}}\norm{\hat{b}}\norm{\mu(\cHpluso)}.
     \end{align*}
  Finally note that $\int_{0}^{t}\E^{-2\delta s}\E^{-\delta (t-s)}\,\D
     s=\frac{1}{\delta}(\E^{-\delta t}-\E^{-2\delta t})$ and hence the last
     term in~\eqref{eq:uniform-growth-bound-semigroup-4} vanishes as
     $t\to\infty$, which can be seen from
     \begin{align}
      &\int_{0}^{t}\int_{\cHpluso}\norm{\E^{s\hat{B}^{*}}\xi}^{2}\big\langle
       x,
       \E^{(t-s)\hat{B}}\,\frac{\mu(\D\xi)}{\norm{\xi}^{2}}\big\rangle\,\D
        s\nonumber\\
       &\qquad\leq M^{3}\norm{\mu(\cHpluso)}\norm{x} \int_{0}^{t}\E^{-2\delta
          s}\E^{-\delta (t-s)}\,\D s\nonumber\\
       &\qquad\leq \frac{M^{3}}{\delta}\norm{\mu(\cHpluso)}\norm{x}
         (\E^{-\delta t}-\E^{-2\delta t}).\label{eq:uniform-growth-bound-semigroup-5} 
     \end{align}
     Thus we proved that $\sup_{t\geq
       0}\sup_{x\in\cHplus}\EXspec{x}{\rho(X_{t})}<\infty$, which proves the statement.
   \end{proof}

   In the next proposition we show first that for every $f\in\Bsq$ the transition
   semigroup $(P_{t})_{t\geq 0}$ converges in $\Bsq$ as $t\to\infty$ and
   subsequently use this to define a continuous linear functional on $\Bsq$
   given by the limits.

   \begin{proposition}\label{prop:convergence-semigroup}
     For all $f\in\Bsq$ the limit $\lim_{t\to\infty}P_{t}f$ exists in $\Bsq$ and
     $\pi(f)\df \lim_{t\to\infty}P_{t}f(x)$ defines a continuous linear
     functional on $\Bsq$. 
   \end{proposition}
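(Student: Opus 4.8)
The plan is to combine the uniform operator bound from Lemma~\ref{lem:uniform-growth-bound-semigroup} with an explicit computation of the limit on the exponential functions, and then to pass to the whole space by density. Set $C\df\sup_{t\geq 0}\norm{P_{t}}_{\cL(\Bsq)}$, which is finite by Lemma~\ref{lem:uniform-growth-bound-semigroup}, and for $u\in\cHplus$ write $f_{u}(x)\df\E^{-\langle u,x\rangle}$; each $f_{u}$ is bounded and weakly continuous, hence $f_{u}\in\Bsq$.

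First I would establish \emph{norm} convergence of $(P_{t}f_{u})_{t\geq 0}$ in $\Bsq$ (as opposed to the mere pointwise convergence of Proposition~\ref{prop:limit-Laplace-transform}). By the affine transform formula, $P_{t}f_{u}(x)=\E^{-\phi(t,u)-\langle x,\psi(t,u)\rangle}$ with $\phi(t,u)=\int_{0}^{t}F(\psi(s,u))\,\D s$; since this is the expectation of a quantity in $(0,1]$, both exponents are nonnegative. Using $|\E^{-a}-\E^{-b}|\leq|a-b|$ for $a,b\geq 0$ together with $\langle x,\psi(t,u)\rangle\geq 0$, I obtain
\begin{align*}
 \big|P_{t}f_{u}(x)-c_{u}\big|\leq \int_{t}^{\infty}|F(\psi(s,u))|\,\D s+\langle x,\psi(t,u)\rangle,
\end{align*}
where $c_{u}\df\exp\big(-\int_{0}^{\infty}F(\psi(s,u))\,\D s\big)$ is the $x$-independent limit from Proposition~\ref{prop:limit-Laplace-transform}. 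The first term is independent of $x$ and vanishes as $t\to\infty$ because $\int_{0}^{\infty}|F(\psi(s,u))|\,\D s<\infty$ (as shown in the proof of Proposition~\ref{prop:limit-Laplace-transform}). For the second term, Lemma~\ref{lem:norm-bound-psi} and \eqref{eq:stable-semigroup} give $\langle x,\psi(t,u)\rangle\leq\norm{x}\,M\E^{-\delta t}\norm{u}$, so dividing by $\rho(x)=1+\norm{x}^{2}$ and using $\norm{x}/\rho(x)\leq 1/2$ yields
\begin{align*}
 \norm{P_{t}f_{u}-c_{u}\one}_{\rho}\leq \int_{t}^{\infty}|F(\psi(s,u))|\,\D s+\tfrac{1}{2}M\E^{-\delta t}\norm{u}\xrightarrow[t\to\infty]{}0.
\end{align*}
Hence $P_{t}f_{u}\to c_{u}\one$ in $\Bsq$, and by linearity $P_{t}f$ converges in $\Bsq$ to a \emph{constant} function for every $f$ in the linear span $D\df\lin\set{f_{u}\colon u\in\cHplus}$.

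Next I would upgrade this to all of $\Bsq$ by the standard three-$\epsilon$ argument, available precisely because $(P_{t})_{t\geq 0}$ is uniformly bounded: given $f\in\Bsq$ and $\epsilon>0$, choose $g\in D$ with $\norm{f-g}_{\rho}<\epsilon$, whence $\norm{P_{t}f-P_{s}f}_{\rho}\leq 2C\epsilon+\norm{P_{t}g-P_{s}g}_{\rho}$; since $(P_{t}g)_{t\geq 0}$ converges it is Cauchy, and completeness of $\Bsq$ then forces $(P_{t}f)_{t\geq 0}$ to converge. The single ingredient this requires is that $D$ be dense in $\Bsq$, i.e. that the span of $\{\E^{-\langle u,\cdot\rangle}\colon u\in\cHplus\}$ be dense in the closure of $C_{b}(\cHplus_{\mathrm{w}})$. \textbf{This density is the main obstacle}, and I would derive it from the Stone--Weierstrass-type density theory for the generalized Feller framework (see Appendix~\ref{sec:gener-fell-semigr}), using that these exponentials form a point-separating family in $\Bsq$.

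Finally, since each limit on $D$ is constant and the constants form a closed subspace of $\Bsq$ (indeed $\norm{c\one}_{\rho}=|c|$, so this subspace is isometric to $\MR$), the limit $\lim_{t\to\infty}P_{t}f$ is constant for every $f\in\Bsq$. Its value is therefore independent of the evaluation point, so that $\pi(f)\df\lim_{t\to\infty}P_{t}f(x)$ is well defined; linearity is immediate from the linearity of the limit, and continuity follows from $|\pi(f)|=\norm{\lim_{t\to\infty}P_{t}f}_{\rho}\leq\sup_{t\geq 0}\norm{P_{t}f}_{\rho}\leq C\norm{f}_{\rho}$.
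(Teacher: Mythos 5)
Your proof is correct and follows essentially the same route as the paper's: norm convergence of $P_{t}\E^{-\langle u,\cdot\rangle}$ to the constant function $c_{u}\one$ via the bound $\int_{t}^{\infty}|F(\psi(s,u))|\,\D s+\norm{\psi(t,u)}\sup_{x\in\cHplus}\norm{x}/\rho(x)$, followed by linear extension, the uniform bound of Lemma~\ref{lem:uniform-growth-bound-semigroup}, and an $\varepsilon/3$-argument on the closure. The density of $\lin\set{\E^{-\langle u,\cdot\rangle}\colon u\in\cHplus}$ in $\Bsq$ that you single out as the main obstacle is precisely what the paper imports as \cite[Lemma 4.7]{cox2020affine}, which is established by the Stone--Weierstrass-type argument for weighted spaces that you sketch (the exponentials form a point-separating multiplicative family containing $\one$), so your plan closes the gap in the same way.
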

   \begin{proof}
     By Proposition~\ref{prop:limit-Laplace-transform} we know that for every $u\in\cHplus$
     \begin{align*}
       \lim_{t\to\infty}(P_{t}\E^{-\langle
       u,\cdot\rangle})(x)=\E^{-\int_{0}^{\infty}F(\psi(s,u))\,\D s}, \quad
       \forall\, x\in\cHplus.
     \end{align*}
     Define, for $u\in\cHplus$, 
     $\pi_{u}=\E^{-\int_{0}^{\infty}F(\psi(s,u))\,\D s}\one$
     where $\one$ denotes the constant one function. We claim that the sequence $(P_{t}\E^{-\langle u, \cdot\rangle})_{t\geq 0}$ converges in $\Bsq$ to the constant function $\pi_u \in \mathcal{B}_{\rho}(\mathcal{H}_w^+)$. Indeed, we have
     \begin{align*}
       \norm{P_{t}\E^{-\langle
       u,\cdot\rangle}-\pi_{u}}_{\rho} &= \sup_{x\in\cHplus}\frac{\Big\lvert\big(\E^{-\int_{0}^{t}F(\psi(s,u))\,\D
                                            s-\langle
                                            \psi(t,u),x\rangle}-\E^{-\int_{0}^{\infty}F(\psi(s,u))\,\D
                                            s}\big)\Big\rvert}{\rho(x)}\nonumber\\
                                                                  &\leq
         \sup_{x\in\cHplus}\frac{\big\lvert\int_{t}^{\infty}F(\psi(s,u))\,\D s-\langle
         \psi(t,u),x\rangle\big\rvert}{\rho(x)}
         \\ &\leq \int_t^{\infty}|F(\psi(s,u))|\,\D  s + \| \psi(t,u)\|\sup_{x \in \cHplus} \frac{\|x\|}{\rho(x)}.
     \end{align*}
     where we have used $\rho(x) = 1 + \|x\|^2$.
     The first term converges to zero due to \eqref{eq:F integrability estimate}, while the second term tends to zero by Lemma~\ref{lem:norm-bound-psi}.
     
     Let $\cD\df\lin\set{\E^{-\langle u,\cdot \rangle}\colon u\in\cHplus}$ and define $\pi$ as the linear extension of $\pi_u$ onto $\cD$. In particular, we have
     $\lim_{t \to \infty}P_{t}f = \pi(f)$ in $\Bsq$ for every $f\in\cD$. In view of Proposition~\ref{lem:uniform-growth-bound-semigroup} we have $\sup_{t\geq 0}\norm{P_{t}}_{\cL(\Bsq)}<\infty$ and hence 
     $|\pi(f)| \leq \sup_{t\geq 0}\norm{P_{t}}_{\cL(\Bsq)}\norm{f}_{\Bsq}$, i.e.
     $\pi$ is bounded on $\cD$.
     Since $\cD$ is dense in $\Bsq$, see \cite[Lemma 4.7]{cox2020affine}, this means that there exists a
     unique extension of $\pi$ to a continuous linear functional on $\Bsq$, which we also denote by $\pi$. We thus proved the existence of $\pi\in
     \cL(\Bsq,\MR)$ and it is only left to show that $P_{t}f\to
     \pi(f)$ as $t\to\infty$ for all $f\in\Bsq$. 
     The latter one is an immediate consequence of an $\varepsilon/3$-argument using $\sup_{t\geq 0}\norm{P_{t}}_{\cL(\Bsq)}<\infty$ and $\overline{\cD} = \Bsq$. Thus we conclude the assertion.

   \end{proof}
   In the following lemma we prove that the functional $\pi$ is represented by a unique probability measure on $\cB(\cHplus)$:
   \begin{lemma}\label{lem:invariant-measure-functional}
   Let $\pi$ denote the continuous linear functional from
   Proposition~\ref{prop:convergence-semigroup}. Then there exists a unique probability measure $\nu$ on $\cHplus$ such that
   \begin{align}
     \label{eq:invariant-functional}
    \pi(f)=\int_{\cHplus}f(\xi)\,\nu(\D\xi)\quad \text{for all } f\in\Bsq, 
   \end{align}
   and $\nu$ is inner-regular on $\cB(\cHplus)$ when $\cHplus$ is equipped with the weak topology.      
   \end{lemma}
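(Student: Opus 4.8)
The plan is to recognise $\pi$ as a positive, unit-normalised continuous linear functional on $\Bsq$ and then to invoke the Riesz-type representation theorem for generalized Feller spaces recalled in Appendix~\ref{sec:gener-fell-semigr}. First I would establish positivity: for $f\in\Bsq$ with $f\geq 0$, positivity of the transition kernels gives $P_{t}f(x)=\int_{\cHplus}f(\xi)\,p_{t}(x,\D\xi)\geq 0$ for all $t\geq 0$ and $x\in\cHplus$, so Proposition~\ref{prop:convergence-semigroup} yields $\pi(f)=\lim_{t\to\infty}P_{t}f(x)\geq 0$. Furthermore, the constant function $\one$ belongs to $\Bsq$ (it is bounded and weakly continuous) and, since the process is conservative, $P_{t}\one=\one$ for every $t\geq 0$, whence $\pi(\one)=1$. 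Together with the continuity established in Proposition~\ref{prop:convergence-semigroup}, this shows that $\pi$ is a positive, normalised, continuous linear functional.

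The key structural fact enabling the representation is that $\rho(x)=1+\norm{x}^{2}$ is an \emph{admissible weight function} on $\cHplus_{\mathrm{w}}$: for every $R\geq 1$ the sublevel set $\set{x\in\cHplus\colon\rho(x)\leq R}=\set{x\in\cHplus\colon\norm{x}\leq\sqrt{R-1}}$ is norm-bounded, hence relatively weakly compact by reflexivity of $\cH$, and it is weakly closed since both $\cHplus$ and the closed norm-ball are convex and norm-closed; thus it is compact in $\cHplus_{\mathrm{w}}$. This compactness of the sublevel sets is exactly the hypothesis required by the generalized Feller Riesz representation theorem in Appendix~\ref{sec:gener-fell-semigr}. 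Applying that theorem to the positive continuous functional $\pi$ produces a unique inner-regular (Radon) measure $\nu$ on $\cB(\cHplus_{\mathrm{w}})$, with finite $\rho$-integral, such that $\pi(f)=\int_{\cHplus}f(\xi)\,\nu(\D\xi)$ for all $f\in\Bsq$. Since $\cB(\cHplus)=\cB(\cHplus_{\mathrm{w}})$, the measure $\nu$ is defined on the required $\sigma$-algebra, and evaluating the representation at $f=\one$ gives $\nu(\cHplus)=\pi(\one)=1$, so $\nu$ is a probability measure. Uniqueness is part of the cited representation theorem; alternatively, any two representing measures must agree on the functions $\E^{-\langle u,\cdot\rangle}$, $u\in\cHplus$, which characterise measures by \cite[Lemma A.1]{cox2021infinitedimensional}.

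The main obstacle, and the reason the classical Riesz--Markov theorem cannot be applied directly, is that $\cHplus_{\mathrm{w}}$ is not locally compact in infinite dimensions, so inner-regularity of the representing measure is not automatic. The whole purpose of the generalized Feller framework is to circumvent this: the weak compactness of the sublevel sets $\set{\rho\leq R}$ substitutes for local compactness and is precisely what guarantees both the existence of $\nu$ and its inner-regularity in the weak topology. Consequently, the crux of the proof is to verify the admissibility of $\rho$ and to match the hypotheses of the appendix's representation theorem; by contrast, the positivity and normalisation of $\pi$ are comparatively routine.
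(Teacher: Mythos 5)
Your proposal is correct and takes essentially the same route as the paper: the paper likewise obtains $\nu$ from a Riesz-type representation theorem for the weighted space $\Bsq$ (citing \cite[Theorem 2.4]{CT20} rather than Appendix~\ref{sec:gener-fell-semigr}, which in fact contains no representation theorem), notes that $\nu$ is Radon with respect to the weak topology, and then deduces $\nu(\cHplus)=1$ and non-negativity of $\nu$ from $P_{t}\one=\one$ and $P_{t}f\geq 0$. The only difference is one of ordering — you first prove positivity and normalisation of the functional $\pi$ and invoke a positive-functional version of the theorem, whereas the paper represents the general continuous functional by a signed Radon measure and verifies positivity of the measure afterwards — which is immaterial, and your extra verification that $\rho(x)=1+\norm{x}^{2}$ has weakly compact sublevel sets is a correct hypothesis check that the paper implicitly inherits from the generalized Feller setup of \cite{cox2020affine}.
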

   \begin{proof}
     By an application of the Riesz-representation theorem in \cite[Theorem
     2.4]{CT20} there exists a unique finite signed Radon measure $\nu$ on
     $\cB(\cHplus)$ such that~\eqref{eq:invariant-functional} and 
     \begin{align}\label{eq:total-variation-norm}
       \int_{\cHplus}\big(1+\norm{x}^{2}\big)\,|\nu|(\D\xi)=\norm{\pi}_{\cL(\Bsq,\MR)} 
     \end{align}
     hold. Here $|\nu|$ denotes the total variation measure of $\nu$. Note that $\nu$ is
     a Radon measure with respect to the weak topology on $\cHplus$, which
     implies the statement on the inner-regularity. It is left to prove
     that $\nu$ is a probability measure. Note that since
     $\lim_{t\to\infty}P_{t}\one(x)=1$ we have $\pi(\one)=1$ and hence
     $\nu(\cHplus)=1$. Moreover, as $P_{t}f\geq 0$ for all non-negative $f\in
     C_{b}(\cHplus_{\mathrm{w}})$ and all $t\geq 0$, we have
     $\lim_{t\to\infty}P_{t}f(x)\geq 0$ for all $x\in\cHplus$ and hence
     $\int_{\cHplus}f(\xi)\,\nu(\D\xi)\geq 0$ for all non-negative $f\in
     C_{b}(\cHplus_{\mathrm{w}})$, which implies that the measure $\nu$ is also non-negative and hence it is a probability measure on $\cB(\cHplus)$. 
   \end{proof}
   
   In the following we identify the linear functional $\pi$ with the measure $\nu$ given by Lemma~\ref{lem:invariant-measure-functional} and write $\pi$
   instead of $\nu$. Finally we show that $\pi$ is, indeed, the unique invariant measure of $(p_{t}(x,\cdot))_{t\geq 0}$. 
   
   \begin{proposition}\label{prop:existence-invariant-measure}
     Let $(b,B,m,\mu)$ be an admissible parameter set such that
     Assumption~\ref{assump:subcritical-pure-jump} is satisfied and denote the
     associated subcritical affine Markov process on $\cHplus$ by $(X_{t})_{t\geq
       0}$ and its transition kernels by $(p_{t}(x,\D\xi))_{t\geq 0}$.   
     Then there exists a unique invariant measure $\pi$ for
     $(p_{t}(x,\cdot))_{t\geq 0}$. Moreover, for every $x\in\cHplus$ we have 
     \begin{align}\label{eq:convergence-to-invariant}
       \lim_{t\to\infty}\int_{\cHplus}f(\xi)\,p_{t}(x,\D\xi)\to \int_{\cHplus}f(\xi)\,\pi(\D\xi),\quad
       \forall f\in C_{b}(\cHplus_{\mathrm{w}}),  
     \end{align}
     and the Laplace transform of $\pi$ is given by~\eqref{eq:Laplace-invariant}.
   \end{proposition}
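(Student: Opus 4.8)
The plan is to assemble the three assertions from the results already in place, deferring the genuinely new content to the verification of invariance and uniqueness. Recall from Proposition~\ref{prop:convergence-semigroup} that $\pi(f)=\lim_{t\to\infty}P_{t}f$ exists in $\Bsq$ and defines a continuous linear functional, and from Lemma~\ref{lem:invariant-measure-functional} that this functional is represented by a unique inner-regular probability measure, which I continue to denote by $\pi$. Since convergence in $\Bsq$ is convergence in $\norm{\cdot}_{\rho}$ and $|g(x)|\leq\rho(x)\norm{g}_{\rho}$, the value $\lim_{t\to\infty}P_{t}f(x)=\pi(f)$ is also attained pointwise for every $x\in\cHplus$. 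As $C_{b}(\cHplus_{\mathrm{w}})\subseteq\Bsq$, this yields~\eqref{eq:convergence-to-invariant} at once, and applying it to the weakly continuous bounded function $\E^{-\langle u,\cdot\rangle}$ together with Proposition~\ref{prop:limit-Laplace-transform} gives
\[
\int_{\cHplus}\E^{-\langle u,\xi\rangle}\,\pi(\D\xi)=\pi\big(\E^{-\langle u,\cdot\rangle}\big)=\lim_{t\to\infty}P_{t}\E^{-\langle u,\cdot\rangle}(x)=\E^{-\int_{0}^{\infty}F(\psi(s,u))\,\D s},
\]
which is exactly~\eqref{eq:Laplace-invariant}.

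To prove invariance I would exploit the semigroup property together with the defining limit of $\pi$. Since $(P_{t})_{t\geq0}$ leaves $\Bsq$ invariant, for $f\in\Bsq$ the element $P_{t}f$ again lies in $\Bsq$, whence
\[
\int_{\cHplus}P_{t}f(x)\,\pi(\D x)=\pi(P_{t}f)=\lim_{s\to\infty}P_{s}(P_{t}f)=\lim_{s\to\infty}P_{s+t}f=\pi(f)=\int_{\cHplus}f(x)\,\pi(\D x).
\]
Specializing to $f=\E^{-\langle u,\cdot\rangle}$ and using Fubini's theorem, the left-hand side is the Laplace transform of $\int_{\cHplus}p_{t}(x,\cdot)\,\pi(\D x)$ and the right-hand side that of $\pi$; since $\{\E^{-\langle u,\cdot\rangle}:u\in\cHplus\}$ is measure-determining (see~\cite[Lemma A.1]{cox2021infinitedimensional}), the two measures coincide and $\pi$ is invariant.

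For uniqueness, let $\pi'$ be any invariant probability measure. Invariance of $\pi'$ and the affine transform formula~\eqref{eq:affine-transform-formula} give, for every $u\in\cHplus$ and $t\geq0$,
\[
\int_{\cHplus}\E^{-\langle u,\xi\rangle}\,\pi'(\D\xi)=\int_{\cHplus}\E^{-\phi(t,u)-\langle\psi(t,u),x\rangle}\,\pi'(\D x).
\]
I would then let $t\to\infty$ on the right: the integrand is bounded by $1$ uniformly in $t$ and $x$ (as $\phi(t,u)\geq0$ and $\langle\psi(t,u),x\rangle\geq0$ on $\cHplus$) and, by Lemma~\ref{lem:norm-bound-psi} and the integrability from Proposition~\ref{prop:limit-Laplace-transform}, converges pointwise to the constant $\E^{-\int_{0}^{\infty}F(\psi(s,u))\,\D s}$, so dominated convergence identifies the Laplace transform of $\pi'$ with~\eqref{eq:Laplace-invariant}, hence with that of $\pi$, forcing $\pi'=\pi$. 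The one step that genuinely needs care is this last interchange of limit and integral: an arbitrary invariant $\pi'$ carries no a priori moment control, so it is essential that the functionals $\E^{-\langle u,\cdot\rangle}$ are bounded by $1$, which lets dominated convergence run with the $\pi'$-integrable dominant $1$ and no moment hypothesis on $\pi'$.
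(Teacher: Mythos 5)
Your proposal is correct, and all three components (pointwise convergence, the Laplace transform formula, and uniqueness via dominated convergence with the dominant $1$) go through as you describe. The interesting divergence from the paper is in the invariance step. The paper verifies invariance \emph{explicitly on the level of Laplace transforms}: it computes $\int_{\cHplus}\E^{-\langle u,\xi\rangle}\big(\int_{\cHplus}p_{t}(x,\D\xi)\,\pi(\D x)\big)=\E^{-\phi(t,u)}\int_{\cHplus}\E^{-\langle x,\psi(t,u)\rangle}\,\pi(\D x)$, then invokes the flow equation~\eqref{eq:semi-flow}, $\psi(t+s,u)=\psi(t,\psi(s,u))$, together with~\eqref{eq:Laplace-invariant} and $\phi(t,u)=\int_{0}^{t}F(\psi(s,u))\,\D s$ to telescope $\E^{-\phi(t,u)}\E^{-\int_{t}^{\infty}F(\psi(s,u))\,\D s}=\E^{-\int_{0}^{\infty}F(\psi(s,u))\,\D s}$. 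You instead argue abstractly: since $(P_{t})_{t\geq 0}$ leaves $\Bsq$ invariant and $\pi(g)=\lim_{s\to\infty}P_{s}g$ for every $g\in\Bsq$ by Proposition~\ref{prop:convergence-semigroup}, the semigroup law gives $\pi(P_{t}f)=\lim_{s\to\infty}P_{s+t}f=\pi(f)$, and the affine structure is only needed afterwards to pass from the functional identity to the identity of measures via the measure-determining family $\{\E^{-\langle u,\cdot\rangle}:u\in\cHplus\}$. Your route is more general — it shows invariance of the limit functional for \emph{any} Markov semigroup on $\Bsq$ whose orbits converge, with no use of the Riccati flow — whereas the paper's computation stays entirely within the explicit affine machinery and incidentally re-derives the stationarity of the exponent $\int_{0}^{\infty}F(\psi(s,u))\,\D s$ under the flow, which is illuminating in its own right. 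Your uniqueness argument coincides with the paper's (the paper performs the same limit $t\to\infty$ under the $\pi'$-integral without comment), and your explicit justification that dominated convergence runs with the bound $\E^{-\phi(t,u)-\langle\psi(t,u),x\rangle}\leq 1$ and hence requires no moment hypothesis on $\pi'$ is a point the paper leaves implicit; making it explicit is a genuine, if small, improvement in rigor.
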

   \begin{proof}     
     In Proposition~\ref{prop:convergence-semigroup} and the subsequent
     arguments, we have already shown the existence of the Borel measure $\pi$ such
     that~\eqref{eq:convergence-to-invariant} holds. It is left to show that
     $\pi$ is the unique invariant measure. We have
     \begin{align*}
       \int_{\cHplus}\E^{-\langle
       u,\xi\rangle}\Big(\int_{\cHplus}p_{t}(x,\D\xi)\,\pi(\D
       x)\Big)&=\int_{\cHplus}\Big(\int_{\cHplus}\E^{-\langle
                u,\xi\rangle}\,p_{t}(x,\D\xi)\Big)\,\pi(\D x)\\
              &= \E^{-\phi(t,u)}\int_{\cHplus}\E^{-\langle
                x,\psi(t,u)\rangle}\,\pi(\D x).                                                        
     \end{align*}
     Note that by \eqref{eq:semi-flow} we have $\psi(t+s,u)=\psi(t,\psi(s,u))$
     and hence for every $u\in\cHplus$ we have
     \begin{align*}
       \E^{-\phi(t,u)}\int_{\cHplus}\E^{-\langle
                x,\psi(t,u)\rangle}\,\pi(\D x) &=\E^{-\phi(t,u)}\E^{
                                                       -\int_{0}^{\infty}F(\psi(s,\psi(t,u)))\,\D s
                                                       }\\
                                                     &=\E^{-\phi(t,u)}\E^{-\int_{0}^{\infty}F(\psi(t+s,u))\,\D
                                                       s}\\
                                                     &=\E^{-\phi(t,u)}\E^{-\int_{t}^{\infty}F(\psi(s,u))\,\D
                                                       s}\\
                                                     &= \E^{-\int_{0}^{\infty}F(\psi(s,u))\,\D
                                                       s}\\
                                                     &=
                                                       \int_{\cHplus}\E^{-\langle
                                                       x,u\rangle}\,\pi(\D x).
  \end{align*}
  This proves the invariance of $\pi$. Next, we prove that $\pi$ is the unique
  invariant measure. Suppose there exists a $\pi'\in\cM(\cHplus)$ which is
  invariant for $p_{t}(x,\D\xi)$, then for every $u\in\cHplus$ and $t\geq 0$
  we have:
  \begin{align*}
    \int_{\cHplus}\E^{-\langle x, u\rangle}\,\pi'(\D
    x)&=\int_{\cHpluso}\E^{-\langle
    u,\xi\rangle}\Big(\int_{\cHpluso}p_{t}(x,\D\xi)\,\pi'(\D x)\Big)\\
    &=\int_{\cHpluso}\E^{-\phi(t,u)-\langle x,\psi(t,u)\rangle}\,\pi'(\D x),
  \end{align*}
  now by letting $t\to\infty$ we find that
  \begin{align*}
  \int_{\cHpluso}\E^{-\langle u,x\rangle}\,\pi'(\D x)=\exp\left(-\int_{0}^{\infty}F(\psi(s,u))\D s\right)\,.  
  \end{align*}
  The Laplace transform is measure determining for measures on $\cB(\cHplus)$ and hence $\pi=\pi'$.
\end{proof}

\begin{remark}\label{rem:weak-convergence}
The convergence in~\eqref{eq:convergence-to-invariant}
is weak convergence of $p_{t}(x,\cdot)$ to $\pi$ as $t\to\infty$ in the weak topology
on $\cH$. Even though the Borel algebras of $\cH$ equipped with the norm
topology and weak topology coincide, the weak convergence is
different in general. We say that $p_{t}(x,\cdot)\to \pi$ as $t\to\infty$
\textit{weakly in the weak topology} on $\cHplus$, whenever $P_{t}f(x)\to
\int_{\cHplus}f(\xi)\,\pi(\D \xi)$ for all $f\in C_{b}(\cHplus_{\mathrm{w}})$. 

If the stronger assumption $P_{t}f(x)\to \int_{\cHplus}f(\xi)\,\pi(\D \xi)$ for all
$f\in C_{b}(\cHplus)$ holds, we speak of the usual \textit{weak
  convergence}, i.e., $p_{t}(x,\cdot)\Rightarrow \pi$ as $t\to\infty$. By
\cite[Theorem 1 and  2]{Mer89} we know that weak convergence in the weak
topology together with 
\begin{align*}
  \lim_{N\to\infty}\sup_{n\in\MN}p_{t}(x, A_{N,n})=0,\quad \text{for all }\epsilon>0, 
\end{align*}
where $A_{N,n}\df\set{\sum_{i=N}^{\infty}\langle x, e_{i}\rangle^{2}\geq
  \epsilon}$ for $N,n\in\MN$, implies $p_{t}(x,\cdot)\Rightarrow \pi$ as
$t\to\infty$. Note that in our main Theorem~\ref{thm:convergence-Wasserstein}
we assert weak convergence in the \emph{strong topology}, which will be shown below.
\end{remark}

\subsection{Proof of Theorem~\ref{thm:convergence-Wasserstein}}\label{sec:conv-rates-wass}
Proposition~\ref{prop:existence-invariant-measure} ensures the existence of a unique invariant measure $\pi$ of
$(p_{t}(x,\cdot))_{t\geq 0}$ with Laplace transform~\eqref{eq:Laplace-invariant}. We also proved weak convergence of $(p_{t}(x,\cdot))_{t\geq 0}$
to $\pi$ as $t\to\infty$ in the weak topology. What is left to show is the convergence rates in Wasserstein
distance of order $p$ for $p\in [1,2]$ as
in~\eqref{eq:convergence-Wasserstein}. Then convergence in Wasserstein distance of some order $p\in [1,\infty)$ implies weak convergence (in the
strong topology) and convergence of the $p$-th absolute moment, see \cite[Theorem
6.9]{Vil09}. This implies the last assertion of
Theorem~\ref{thm:convergence-Wasserstein}. In the remainder we prove the
convergence rates~\eqref{eq:convergence-Wasserstein}.

Let $p\in [1,2]$ and as before we denote by $q_{t}(x,\D\xi)$ the transition
kernel of an affine process associated with the admissible parameter set
$(0,B,0,\mu)$. Let $t\geq 0$, $x\in\cHplus$ and $G\in\cC(\delta_{x},\pi)$ i.e. $G$ is a coupling with
marginals $\delta_{x}$ and $\pi$. Note that
\begin{align*}
p_{t}(x,\D y)=\int_{\cHplus}p_{t}(z,\D y)\,\delta_{x}(\D z)=\int_{\cHplus\times\cHplus}p_{t}(z,\D y)\one(z') G(\D z, \D z')  
\end{align*}
and by the invariance of $\pi$ we also have
\begin{align*}
  \pi(\D y)=\int_{\cHplus}p_{t}(z',\D y)\,\pi(\D z')=\int_{\cHplus\times\cHplus}p_{t}(z',\D y)\one(z) G(\D z, \D z').
\end{align*}
  Thus by the convexity property in \cite[Theorem 4.8]{Vil09} and since $W_{p}\leq W_{2}$ for $p\in [1,2]$ we have 
  \begin{align}
    W_{p}\big(p_{t}(x,\cdot),\pi\big)&=W_{p}\left(\int_{\cHplus}p_{t}(z,\cdot)\,\delta_{x}(\D
                                       z),\int_{\cHplus}p_{t}(y,\cdot)\,\pi(\D
                                       y)\right)\nonumber\\
                                     &\leq
                                       \left(\int_{\cHplus\times\cHplus}W_{2}\big(p_{t}(z,\cdot),p_{t}(y,\cdot)\big)^{p}\,G(\D
                                       z, \D y) \right)^{1/p}.\label{eq:convergence-Wasserstein-1}
  \end{align}
  By Lemma~\ref{lem:convolution-homogeneous} we have
  $p_{t}(z,\cdot)=q_{t}(z,\cdot)*p_{t}(0,\cdot)$ for every $t\geq 0$. Thus for
  $H\in\cC(q_{t}(z,\cdot),q_{t}(y,\cdot))$ we obtain by Lemma~\ref{lem:Wasserstein-convolution} that
  \begin{align}
    W_{2}\big(p_{t}(z,\cdot),p_{t}(y,\cdot)\big)^{p}
    &= W_{2}\big(q_{t}(z,\cdot)*p_{t}(0,\cdot),q_{t}(y,\cdot)*p_{t}(0,\cdot)\big)^{p}\nonumber
    \\ &\leq W_{2}\big(q_{t}(z,\cdot),q_{t}(y,\cdot)\big)^{p}\nonumber
    \\ &\leq \left(\int_{\cHplus\times\cHplus}\norm{\tilde{x}-\tilde{y}}^2\,H(\D \tilde{x}, \D \tilde{y})\right)^{p/2}\nonumber
    \\ &\leq \left(2\int_{\cHplus\times\cHplus}\big(\norm{\tilde{x}}^2+\norm{\tilde{y}}^2\big)\,H(\D \tilde{x}, \D \tilde{y})\right)^{p/2}\nonumber
    \\ &= \left(2\int_{\cHplus\times\cHplus}\norm{\tilde{x}}^2\,q_{t}(z, \D                                 \tilde{x})+2\int_{\cHplus\times\cHplus}\norm{\tilde{y}}^2\,q_{t}(y,\D \tilde{y})\right)^{p/2}.\label{eq:convergence-Wasserstein-2-1}
  \end{align}
  Now, recall from~\eqref{eq:uniform-growth-bound-semigroup-1} that
  \begin{align*}
   \int_{\cHplus\times\cHplus}\norm{\tilde{x}}^2\,q_{t}(z,
    \D\tilde{x}) \leq  2\norm{\E^{t\hat{B}^{*}}z}^{2} + \int_{0}^{t}\int_{\cHpluso}\norm{\E^{s\hat{B}^{*}}\xi}^{2}\big\langle
    z,\E^{(t-s)\hat{B}}\,\frac{\mu(\D\xi)}{\norm{\xi}^{2}}\big\rangle\,\D s, 
  \end{align*}
  while all the other terms vanish
  as $\hat{b}=0$ and $m = 0$. By the same estimations as
  in~\eqref{eq:uniform-growth-bound-semigroup-5} we conclude that
  \begin{align*}
  \int_{\cHplus\times\cHplus}\norm{\tilde{x}}^2\,q_{t}(z, \D
    \tilde{x})&\leq   2\norm{\E^{t\hat{B}^{*}}z}^{2}+\int_{0}^{t}\int_{\cHpluso}\norm{\E^{s\hat{B}^{*}}\xi}^{2}\big\langle
    z,\E^{(t-s)\hat{B}}\,\frac{\mu(\D\xi)}{\norm{\xi}^{2}}\big\rangle\,\D
    s\\
    &\leq 2 M^2\E^{-2\delta
      t}\norm{z}^{2}+\frac{M^{3}}{\delta}\norm{\mu(\cHpluso)}\E^{-\delta t}\norm{z}.
  \end{align*}
  Inserting this back
  into~\eqref{eq:convergence-Wasserstein-2-1} and using the sub-additivity of $x\mapsto
  x^{p/2}$ for $p\in [1,2]$, we obtain
  \begin{align}
    W_{2}\big(p_{t}(z,\cdot),p_{t}(y,\cdot)\big)^{p}&\leq \big(C_{1}\E^{-\delta
    t}\norm{z}\big)^{p}+\big(C_{2}\E^{-\delta/2}\norm{z}^{1/2}\big)^{p}\nonumber\\
    &\quad+\big(C_{1}\E^{-\delta
    t}\norm{y}\big)^{p}+\big(C_{2}\E^{-\delta/2}\norm{y}^{1/2}\big)^{p}, \label{eq:convergence-Wasserstein-3}
  \end{align}
  for $C_{1}=2M$ and $C_{2}=2^{1/2}M^{3/2}\delta^{-1/2}\norm{\mu(\cHpluso)}^{1/2}.$
  Now, plugging~\eqref{eq:convergence-Wasserstein-3} back
  into~\eqref{eq:convergence-Wasserstein-1} and again by the sub-additivity of $x\mapsto
  x^{1/p}$, we obtain the desired~\eqref{eq:convergence-Wasserstein}.

\subsection{Proof of Corollary~\ref{cor:stationary}}\label{sec:proof-coroll-refc}

For every $x\in\cHplus$ let $(p_{t}(x,\cdot))_{t\geq 0}$ be the transition
kernels associated to the admissible parameter set $(b,B,m,\mu)$ by
Theorem~\ref{thm:CKK20}. Moreover, let $\pi$ be the unique invariant distribution of
$(p_{t}(x,\cdot))_{t\geq 0}$ (which is independent of $x\in\cHplus$). From~\cref{item:invariant-measure} 
we know that $\pi$ is inner-regular. Thus from
Proposition~\ref{prop:initial-distribution} we conclude the existence of a
unique Markov process $(X^{\pi})_{t\geq 0}$ such that for all $f\in\Bsq$ we have
$\EXspec{\pi}{f(X_{t})}=\int_{\cHplus}P_{t}f(x)\,\pi(\D x)$.
Moreover, since $\pi$ is the invariant measure we have for each $t \geq 0$
\begin{align*}
\int_{\cHplus}P_{t}f(x)\,\pi(\D x)=\int_{\cHplus}\Big(\int_{\cHplus}f(\xi)
  p_{t}(x,\xi)\Big)\,\pi(\D x) =\int_{\cHplus}f(\xi)\,\pi(\D \xi),
\end{align*}
which implies that  for all $t\geq 0$ the random variable $X_{t}^{\pi}$ has
distribution $\pi$.

\subsection{Proof of Proposition~\ref{prop:limit-moment-formula}}\label{sec:proof-prop-refpr}
Let us denote the space of all Hilbert-Schmidt operators on $\cH$ by
  $\cL_{2}(\cH)$ and note that $(e_{i}\otimes e_{j})_{i,j\in\MN}$ is an
  orthonormal basis of $\cL_{2}(\cH)$. For every $y\in\cH$ the operator
  $y\otimes y\colon\cH\to\cH$ defined by $y\otimes y (x)=\langle x,y\rangle y$
  for every $x\in\cHplus$ is a Hilbert-Schmidt operator on $\cH$ and we can write
  $y\otimes y=
  \sum_{i,j=1}^{\infty}\langle y, e_{i} \rangle\langle y, e_{j}\rangle
  e_{i}\otimes e_{j}$. Note that by~\eqref{eq:total-variation-norm} we have
  $\int_{\cHplus}\rho(\xi)\,\pi(\D\xi)=\norm{\pi}_{\cL(\Bsq,\MR)}<\infty$ and
  hence the absolute second moment of $\pi$ is finite, which implies
  \begin{align*}
   \int_{\cHplus}\norm{y\otimes y}_{\cL_{2}(\cH)}\,\pi(\D y)\leq \int_{\cHplus}\Tr(y\otimes y)\,\pi(\D
    y)\leq \int_{\cHplus}\norm{y}^{2}\,\pi(\D y)<\infty,    
  \end{align*}
  and hence the integral $\int_{\cHplus}y\otimes y\,\pi(\D y)$ is well-defined in the
  Bochner sense. Thus it remains to compute the first two moments of the invariant distribution.
  
  Note that for every $u\in\cH$ the linear functional $\langle
  u,\cdot\rangle\colon\cH\to\MR$ satisfies the following two properties:
  \begin{enumerate}
  \item[i)] for every $R>0$ we have $\langle u,\cdot\rangle\in C_{b}(K_{\mathrm{w}}^{R})$ where the set
    \begin{align*}
      K^{R}_{\mathrm{w}}\df\set{x\in\cHplus\colon\,\norm{x}^{2}+1\leq R}  
    \end{align*}
    is compact in $\cHplus$ equipped with the weak topology and
  \item[ii)] $\lim_{R\to\infty}\sup_{x\in\cHplus\setminus K^{R}_{\mathrm{w}}}|\langle u,x\rangle|(1+\norm{x}^{2})^{-1}=0.$   
  \end{enumerate}
  which by \cite[Theorem 2.7]{doersek2010semigroup} implies $\langle u,\cdot\rangle\in\Bsq$
  for all $u\in\cH$. By Proposition~\ref{prop:convergence-semigroup} we have
  $P_{t}f\to \pi(f)$ as $t\to\infty$ for all $f\in\Bsq$ and hence also
  $P_{t}\langle u, \cdot\rangle\to \int_{\cHplus}\langle u,\xi\rangle\,\pi(\D\xi)$
  as $t\to\infty$. Let $(e_{i})_{i\in\MN}$ be an orthonormal basis of $\cH$. Then by~\eqref{eq:explicit-Xv} for $u=e_{i}$ for $i\in\MN$ we have
  \begin{align*}
  \lim_{t\to\infty}P_{t}\langle e_{i},
    \cdot\rangle&=\lim_{t\to\infty}\big(\int_{0}^{t}\langle \hat{b},\E^{s\hat{B}}e_{i}\rangle\,\D s+\langle x,
              \E^{t\hat{B}}e_{i}\rangle\big)=\int_{0}^{\infty}\langle \hat{b},\E^{s\hat{B}}e_{i}\rangle\,\D s     
  \end{align*}
  and since $\xi=\sum_{i=1}^{\infty}\langle \xi, e_{i}\rangle e_{i}$ it
  follows that
  \begin{align*}
  \lim_{t\to\infty}\int_{\cHplus}\xi\,p_{t}(x,\D\xi)
  &= \lim_{t \to \infty}\sum_{i=1}^{\infty}\int_{\cHplus}\langle
                                     \xi, e_{i}\rangle e_{i}\,p_{t}(x,\D\xi)
                                   \\ &=\sum_{i=1}^{\infty
                                     }\Big(\int_{0}^{\infty}\langle
                                     \hat{b},\E^{s\hat{B}}e_{i}\rangle\,\D
                                     s\Big) e_{i}
                                     \\ &=\int_{0}^{\infty}\E^{s\hat{B}^{*}}\hat{b}\,\D s,
  \end{align*}
  where we have used 
  $\lim_{t \to \infty}\sum_{i=1}^{\infty}\Big(\int_{0}^{t}\langle \hat{b},\E^{s\hat{B}}e_{i}\rangle\,\D s\Big) e_{i}
  = \sum_{i=1}^{\infty}\Big(\int_{0}^{\infty}\langle \hat{b},\E^{s\hat{B}}e_{i}\rangle\,\D s\Big) e_{i}$ which is justified if $\lim_{N \to \infty}\sup_{t \geq 0} \sum_{i=N}^{\infty}\left\| \int_0^t \langle e^{s \widehat{B}^*}\widehat{b}, e_i \rangle e_i \D s \right\| = 0$. The latter one follows from
  $\sup_{t \geq 0} \sum_{i=N}^{\infty}\left\| \int_0^t \langle e^{s \widehat{B}^*}\widehat{b}, e_i \rangle e_i \,\D s \right\| \leq \int_0^{\infty} \sum_{i=1}^N | \langle e^{s \widehat{B}^*}\widehat{b}, e_i \rangle | \,\D s$ and
  \[
   \int_0^{\infty} \sum_{i=1}^{\infty} | \langle e^{s \widehat{B}^*}\widehat{b}, e_i \rangle | \D s
   \leq \int_0^{\infty} \| e^{s \widehat{B}^*}\widehat{b} \|\, \D s
   \leq M \| \widehat{b}\| \delta^{-1} < \infty.
  \]
  Recalling that $\hat{b}=b+\int_{\cHplus\cap\{\norm{\xi}>1\}}\xi\,m(\D\xi)$
  yields~\eqref{eq:limit-first-moment-formula}. 
  
  Next we prove the desired formula for the second moments of $\pi$.
  For $i,j\in\MN$ we set $g^{i,j}\df\langle\cdot, e_{i}\rangle\langle \cdot,e_{j}\rangle$. 
  From ~\eqref{eq:explicit-Xv-square} and analogous arguments as we used in Lemma~\ref{lem:uniform-growth-bound-semigroup} (to show that the integrals on the right-hand side of~\eqref{eq:limit-formula-2} below exists and are finite), we find that
  \begin{align}\label{eq:limit-formula-2}
    \lim_{t\to\infty}P_{t}g^{i,j}(x)&=\Big( 
                                                                 \int_0^\infty \langle\hat{b},\E^{s \hat{B}} e_{i}\rangle\,\D s 
                                                                 \Big)\Big( 
                                                                 \int_0^\infty \langle\hat{b},\E^{s \hat{B}} e_{j}\rangle\,\D s 
                                                                 \Big)\nonumber\\
   & \quad+ \int_0^\infty\int_{\cHpluso}\langle \xi, \E^{s \hat{B}} e_{i}\rangle\langle \xi, \E^{s \hat{B}} e_{j}\rangle\,m(\D\xi)\,\D s
\nonumber \\ & \quad 
    +  \int_0^\infty \int_{0}^{s} 
        \Big\langle\hat{b},\E^{(s-u)\hat{B}}\int_{\cHpluso}\langle \xi,\E^{u \hat{B}}e_{i}\rangle\langle \xi,\E^{u \hat{B}}e_{j}\rangle\,\frac{\mu(\D\xi)}{\norm{\xi}^{2}}\Big\rangle\,\D u\, \D s.
  \end{align}
  holds for all $i,j\in\MN$. The second moment formula~\eqref{eq:limit-second-moment-formula} then follows from this and $y\otimes y= \sum_{i,j=1}^{\infty}\langle y, e_{i} \rangle\langle y, e_{j}\rangle e_{i}\otimes e_{j}$, once we have shown that
  \begin{align}\label{eq auxiliary}
   \lim_{t\to\infty}P_{t}g^{i,j}(x) = \int_{\cHplus} g^{i,j}(y)\,\pi(\D y), \qquad i,j \in \MN.
  \end{align}
  Since the function $g^{i,j}$ does not belong to $\Bsq$, we cannot obtain \eqref{eq auxiliary}
  directly from Proposition~\ref{prop:convergence-semigroup}. However,
  since we have $P_{t}\langle \cdot,e_{i}\rangle\langle \cdot,e_{j}\rangle(x) \leq
  P_{t}\rho(x)<\infty$ for all $t\geq 0$ and $x\in\cHplus$, we see that the function is in the larger space $B_{\rho}(\cHplus_\mathrm{w})$ and we deduce the assertion by an additional approximation argument. 
  Namely, define $g_{n}^{i,j} \df g^{i,j} \wedge n$ for $n\in\MN$. Then $g_{n}^{i,j}\in\Bsq$ and we find that  
  \begin{align*}
      \left|P_t g^{i,j}(x) - \int_{\cHplus}g^{i,j}(x)\,\pi(\D x)\right|
      &\leq |P_t g^{i,j}(x) - P_t g_n^{i,j}(x)| 
      \\ &\ \ \ + \left|P_t g_n^{i,j}(x) - \int_{\cHplus}g^{i,j}_n(x)\,\pi(\D x)\right| 
      \\ &\ \ \ + \left| \int_{\cHplus}g^{i,j}_n(x)\,\pi(\D x) - \int_{\cHplus}g^{i,j}(x)\,\pi(\D x) \right|.
  \end{align*}
  Let $\varepsilon > 0$. Take $n \in \N$ large enough so that $\left| \int_{\cHplus}(g^{i,j}_n(x) - g^{i,j}(x))\,\pi(\D x) \right| < \varepsilon$.
  Next, note that 
  \[
   \lim_{n \to \infty}\sup_{t \geq 0}|P_t g^{i,j}(x) - P_t g_n^{i,j}(x)| \leq \lim_{n \to \infty}\sup_{t \geq 0}\mathbb{E}\left[ \| X_t\|^2 \mathbbm{1}_{\{ \| X_t\|^2 > n\}} \right] = 0
  \]
  where the last identity follows from the characterization of convergence in the Wasserstein distance (see \cite[Section 6]{Vil09}). Hence we find $n$ large enough such that 
  $|P_t g^{i,j}(x) - P_t g_n^{i,j}(x)| < \varepsilon$ holds uniformly in $t \geq 0$. 
  Finally, for this fixed choice of $n$, we may choose in view of Proposition~\ref{prop:convergence-semigroup} $t$ large enough so that
  $\left|P_t g^{i,j}(x) - \int_{\cHplus}g^{i,j}(x)\,\pi(\D x)\right| < \varepsilon$. 
  Combining all these estimates proves \eqref{eq auxiliary}.
  
 This completes the proof of Proposition \ref{prop:limit-moment-formula}.

\appendix

\section{Generalized Feller semigroups}\label{sec:gener-fell-semigr}
Let $(Y,\tau)$ be a completely regular Hausdorff topological space. We define:
 \begin{definition}
A function $\rho\colon Y \to (0,\infty)$ such that for every $R>0$ the set
$K_{R}\df\set{x\in Y:\;\rho(x)\leq R}$ is compact is called an \emph{admissible
weight function}. The pair $(Y,\rho)$ is called \emph{weighted space}. 
\end{definition}

Let $\rho\colon Y \rightarrow (0,\infty)$ be an admissible weight function and
assume that $\rho(x)\geq 1$ for every $x\in\cHplus$.
For $f\colon Y \rightarrow \MR$ we define $\| f \|_{\rho} \in [0,\infty]$ by
\begin{align}\label{eq:weighted_norm}
 \| f \|_{\rho} \df \sup_{x\in Y} \tfrac{|f(x)|}{\rho(x)}.
\end{align}
Note that $\| \cdot \|_{\rho}$ defines a norm on the vector space 
$
  B_{\rho}(Y)\df
  \set{
    f\colon Y\to \MR 
    \colon 
    \| f \|_{\rho} < \infty
    }
$
which renders $(B_{\rho}(Y),\|\cdot \|_{\rho})$ a Banach space.
Recall that $C_b(Y)$ denotes the space of bounded $\R$-valued $\tau$-continuous functions on $Y$. 
As the admissible weight function $\rho$ satisfies $\inf_{x\in Y}\rho(x)>0$, we have that $C_b(Y) \subseteq  B_{\rho}(Y)$.
\begin{definition}
 We define $\cB_{\rho}(Y)$ to be the closure of $C_{b}(Y)$ in $B_{\rho}(Y)$.
 \end{definition}
The following useful characterization of $\cB_{\rho}(Y)$ is proven
in~\cite[Theorem 2.7]{doersek2010semigroup}:
\begin{theorem}\label{thm:charac_Brho}
 Let $(Y,\rho)$ be a weighted space. Then $f\in \cB_{\rho}(Y)$
 if and only if $f\rvert_{K_{R}}\in C(K_{R})$ for all $R>0$
and 
\begin{align}\label{eq:function-growth}
\lim_{R\to\infty}\sup_{x\in Y\setminus K_{R}}\tfrac{|f(x)|}{\rho(x)}=0\,.
\end{align}
\end{theorem}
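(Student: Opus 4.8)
The plan is to prove the stated characterization by establishing the set equality $\cB_{\rho}(Y)=\mathcal{D}$, where $\mathcal{D}$ denotes the collection of all $f\colon Y\to\MR$ with $f\rvert_{K_{R}}\in C(K_{R})$ for every $R>0$ and $\lim_{R\to\infty}\sup_{x\in Y\setminus K_{R}}|f(x)|/\rho(x)=0$. The inclusion $\cB_{\rho}(Y)\subseteq\mathcal{D}$ is the routine one. First I would check $C_{b}(Y)\subseteq\mathcal{D}$: the restriction of a continuous function stays continuous, and boundedness together with $\rho\geq 1$ gives $|f(x)|/\rho(x)\leq\norm{f}_{\infty}/R$ on $Y\setminus K_{R}$, which tends to $0$. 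Then I would verify that $\mathcal{D}$ is closed in $(B_{\rho}(Y),\norm{\cdot}_{\rho})$: if $f_{n}\in\mathcal{D}$ and $\norm{f_{n}-f}_{\rho}\to 0$, then on each $K_{R}$ one has $\sup_{K_{R}}|f_{n}-f|\leq R\norm{f_{n}-f}_{\rho}\to 0$, so $f\rvert_{K_{R}}$ is a uniform limit of continuous functions and hence continuous; the decay of $f$ follows from an $\varepsilon/2$-splitting $\sup_{Y\setminus K_{R}}|f|/\rho\leq\norm{f-f_{n}}_{\rho}+\sup_{Y\setminus K_{R}}|f_{n}|/\rho$. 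Since $\cB_{\rho}(Y)$ is by definition the $\norm{\cdot}_{\rho}$-closure of $C_{b}(Y)$, this yields $\cB_{\rho}(Y)\subseteq\mathcal{D}$.

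For the reverse (and substantive) inclusion $\mathcal{D}\subseteq\cB_{\rho}(Y)$, I would approximate a given $f\in\mathcal{D}$ in two stages. \emph{Stage one: reduce to bounded $f$.} Let $\psi_{n}(t)\df\max(-n,\min(n,t))$ and consider $\psi_{n}\circ f$, which is bounded by $n$ and, being a composition with the continuous $\psi_{n}$, lies again in $\mathcal{D}$. I claim $\norm{f-\psi_{n}\circ f}_{\rho}\to 0$. Indeed $|f-\psi_{n}\circ f|=(|f|-n)^{+}$; given $\varepsilon>0$ pick $R_{0}$ with $\sup_{Y\setminus K_{R_{0}}}|f|/\rho<\varepsilon$, and set $M_{0}\df\sup_{K_{R_{0}}}|f|<\infty$ (finite by continuity on the compact $K_{R_{0}}$). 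For $n\geq M_{0}$ the function $(|f|-n)^{+}$ vanishes on $K_{R_{0}}$, while on $Y\setminus K_{R_{0}}$ one has $(|f|-n)^{+}/\rho\leq|f|/\rho<\varepsilon$, giving $\norm{f-\psi_{n}\circ f}_{\rho}\leq\varepsilon$.

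\emph{Stage two: approximate a bounded $f\in\mathcal{D}$ by $C_{b}(Y)$.} Fix $N\df\norm{f}_{\infty}$. Since $(Y,\tau)$ is completely regular and Hausdorff it embeds in its Stone–Čech compactification $\beta Y$, which is compact Hausdorff and hence normal; as $K_{S}$ is compact it is closed in $\beta Y$, so by Tietze's theorem $f\rvert_{K_{S}}$ extends to $\tilde{f}\in C(\beta Y)$ with range in $[-N,N]$, and $g\df\tilde{f}\rvert_{Y}\in C_{b}(Y)$ satisfies $g\rvert_{K_{S}}=f\rvert_{K_{S}}$ and $\norm{g}_{\infty}\leq N$. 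Then $f-g$ vanishes on $K_{S}$, while on $Y\setminus K_{S}$ we have $\rho>S$ and $|f-g|\leq 2N$, so $\norm{f-g}_{\rho}\leq 2N/S$. Letting $S\to\infty$ shows $f\in\overline{C_{b}(Y)}=\cB_{\rho}(Y)$. Composing the two stages proves $\mathcal{D}\subseteq\cB_{\rho}(Y)$, and together with the first part the theorem.

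The step I expect to be delicate is precisely the ordering in the hard inclusion, and it is worth flagging why the naive route fails. If one tried to Tietze-extend an \emph{unbounded} $f\rvert_{K_{S}}$ directly, the extension would only obey $\norm{g}_{\infty}\leq\sup_{K_{S}}|f|$, a quantity that can be of order $S$ (comparable to $\rho$ on $K_{S}$), so the tail estimate $\sup_{K_{S}}|f|/S$ need not vanish. One cannot repair this by multiplying with a continuous cutoff supported near $K_{S}$: in the situation actually used in this paper, where $\rho(x)=1+\norm{x}^{2}$ on $\cHplus$ with the \emph{weak} topology, the sublevel sets $K_{S}$ are weakly compact balls with \emph{empty} weak interior, so no continuous function can equal $1$ on $K_{S}$ and be small immediately outside it. The resolution is exactly the range-truncation of Stage one: once $f$ is bounded by $N$, the Tietze extension is bounded by the \emph{same} $N$, and the harmless tail bound $2N/S$ does all the work without any cutoff in the domain.
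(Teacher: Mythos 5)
Your proof is correct. Note that the paper does not prove this theorem itself but quotes it from \cite[Theorem 2.7]{doersek2010semigroup}, and your argument---the easy inclusion via closedness of the class $\mathcal{D}$ in $(B_{\rho}(Y),\norm{\cdot}_{\rho})$ together with $C_{b}(Y)\subseteq\mathcal{D}$, and the hard inclusion via the range truncation $\psi_{n}\circ f$ followed by a bound-preserving Tietze extension of the bounded restriction $f\rvert_{K_{S}}$ through the Stone--\v{C}ech compactification $\beta Y$---is essentially the standard proof from that source, with your flagged subtlety (that extending an unbounded $f\rvert_{K_{S}}$ directly fails since $\sup_{K_{S}}|f|$ may be of order $S$, and that no weakly continuous cutoff exists because the weakly compact sets $K_{S}$ have empty weak interior) being an accurate account of why the truncation step must come first.
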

Next we recall the definition of a generalized Feller semigroup, as introduced in \cite[Section 3]{doersek2010semigroup}.
\begin{definition}\label{def:genFeller-semigroup}
A family of bounded linear operators $(P_t)_{t\geq 0}$ in $\cL(\cB_{\rho}(Y))$
is called a {\it generalized Feller semigroup (on $\cB_{\rho}(Y)$)}, if
\begin{enumerate}
\item \label{item:semigroup1} $P_0 = I$, the identity on $\cB_{\rho}(Y)$, 
\item \label{item:semigroup2}  $P_{t+s} = P_t P_s$ for all $t,s \geq 0$, 
\item \label{item:semigroup3}  $\lim\limits_{t\to 0+}P_{t}f(x)=f(x)$ for all $f\in \cB_{\rho}(Y)$ and $x\in Y$,
\item \label{item:semigroup4} there exist constants $C\in\MR$ and $\varepsilon>0$ such that $\norm{P_{t}}_{\cL(\cB_{\rho}(Y))}\leq C$ for all $t\in [0,\varepsilon]$,
\item \label{item:semigroup5}  $(P_{t})_{t\geq 0}$ is a positive semigroup,
  i.e., $P_{t}f\geq 0$ for all $t\geq 0$ and for all $f\in \cB_{\rho}(Y)$ satisfying $f\geq0$.
\end{enumerate}
\end{definition}

We make use of the following adapted version of the Kolmogorov extension
theorem presented in \cite[Theorem 2.11]{CT20}.

\begin{proposition}\label{prop:initial-distribution}
  Let $(Y,\rho)$ be a weighted space and let $(P_{t})_{t\geq 0}$ be a
  generalized Feller semigroup on $\cB_{\rho}(Y)$ with $P_{t}\one=\one$ for
  $t\geq 0$. Then for every $\nu\in\cM(Y)$ which is inner-regular, there
  exists a probability space $(\Omega,\cF,\MP_{\nu})$, filtered by a
  right-continuous filtration $(\cF_{t})_{t\geq 0}$, and a Markov process
  $(X_{t})_{t\geq 0}$ with values in $Y$ such that $\MP_{\nu}(X_{0}\in
  A)=\nu(A)$ for every $\cB(Y)$ and 
  \begin{align*}
    \EXspec{\MP_{\nu}}{f(X_{t})}=\int_{Y}P_{t}f(\xi)\,\nu(\D\xi),\quad
    t\geq 0,\, f\in\cB_{\rho}(Y).
  \end{align*}
\end{proposition}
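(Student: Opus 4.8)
The plan is to reduce the assertion to the version of Kolmogorov's extension theorem stated in \cite[Theorem 2.11]{CT20}, the only genuinely new ingredient being the passage from a point mass to a general inner-regular initial law $\nu$. First I would extract transition kernels from the semigroup. For fixed $t\geq 0$ and $x\in Y$ the map $f\mapsto P_t f(x)$ is a positive linear functional on $\cB_{\rho}(Y)$ with $P_t\one(x)=1$; hence the Riesz representation theorem for generalized Feller semigroups (\cite[Theorem 2.4]{CT20}, as already used in Lemma~\ref{lem:invariant-measure-functional}) yields a unique \emph{inner-regular} probability measure $p_t(x,\cdot)$ on $\cB(Y)$ with $P_t f(x)=\int_Y f(\xi)\,p_t(x,\D\xi)$ for all $f\in\cB_{\rho}(Y)$. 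The semigroup identity $P_{t+s}=P_t P_s$ translates into the Chapman--Kolmogorov equations, and the continuity of $x\mapsto P_t f(x)$ for $f\in C_b(Y)$ gives the measurability of $x\mapsto p_t(x,A)$.

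Given these kernels, I would build the finite-dimensional distributions associated with the initial law $\nu$: for $0=t_0<t_1<\dots<t_n$ set
\begin{align*}
 \mu_{t_1,\dots,t_n}(A_1\times\cdots\times A_n)=\int_Y\int_{A_1}\cdots\int_{A_n} p_{t_n-t_{n-1}}(x_{n-1},\D x_n)\cdots p_{t_1}(x_0,\D x_1)\,\nu(\D x_0).
\end{align*}
Chapman--Kolmogorov together with $p_t(x,Y)=1$ makes this family projectively consistent. The adapted extension theorem \cite[Theorem 2.11]{CT20} then produces a probability measure $\MP_{\nu}$ on $\Omega=Y^{\MRplus}$ under which the canonical coordinate process $(X_t)_{t\geq 0}$ is Markov with transition kernels $p_t$ and time-$0$ marginal $\nu$; one equips $\Omega$ with the right-continuous augmentation of the canonical filtration. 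Reading off the one-dimensional marginals gives $\MP_\nu(X_0\in A)=\nu(A)$ and
\begin{align*}
 \EXspec{\MP_{\nu}}{f(X_t)}=\int_Y\Big(\int_Y f(\xi)\,p_t(x,\D\xi)\Big)\,\nu(\D x)=\int_Y P_t f(x)\,\nu(\D x),\qquad f\in\cB_{\rho}(Y),
\end{align*}
first for $f\in C_b(Y)$ and then for all $f\in\cB_{\rho}(Y)$, since both sides are continuous in $f\in\cB_{\rho}(Y)$ (as $P_t\in\cL(\cB_{\rho}(Y))$) and $C_b(Y)$ is dense in $\cB_{\rho}(Y)$.

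Alternatively, since \cite[Theorem 2.11]{CT20} already furnishes, for each point mass $\delta_x$, a law $\MP_x$ on $\Omega$ (cf. Remark~\ref{rem:existence}), one could instead define $\MP_\nu(\cdot)\df\int_Y\MP_x(\cdot)\,\nu(\D x)$, after checking that $x\mapsto\MP_x(A)$ is $\cB(Y)$-measurable for every $A$ in the cylinder $\sigma$-algebra (true on cylinder sets by continuity of $x\mapsto P_t f(x)$, and extended by a Dynkin-class argument). The two constructions coincide by uniqueness of the finite-dimensional distributions.

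The step I expect to be the main obstacle is the use of the \emph{adapted} extension theorem rather than the classical one: because $Y=\cHplus_{\mathrm{w}}$ is completely regular Hausdorff but neither Polish nor locally compact, the classical Kolmogorov extension theorem does not apply, and the projective limit exists only because the marginals $\mu_{t_1,\dots,t_n}$ are inner-regular. This is exactly where the hypothesis that $\nu$ is inner-regular enters, combined with the inner-regularity of the kernels $p_t(x,\cdot)$ coming from the Riesz representation; verifying that inner-regularity is inherited by the $\mu_{t_1,\dots,t_n}$, so that \cite[Theorem 2.11]{CT20} is applicable, is the technical heart of the argument.
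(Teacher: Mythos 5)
Your proposal is correct and follows essentially the same route as the paper: both extract the (inner-regular) kernels $p_t(x,\cdot)$ from the semigroup, form the consistent family of finite-dimensional distributions with initial law $\nu$, and invoke the compact-class Kolmogorov extension theorem underlying \cite[Theorem 2.11]{CT20} (the paper cites it directly as \cite[Theorem 15.23]{AB06}, noting that compact sets in a Hausdorff space form a compact class, so inner-regularity of $\nu$ supplies exactly the required tightness). You correctly identify this inner-regularity/tightness transfer to the marginals $\mu_{t_1,\dots,t_n}$ as the technical heart, which is precisely where the paper's proof leans on ``analogous arguments'' to \cite[Theorem 2.11]{CT20}.
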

\begin{proof}
In \cite[Theorem 2.11]{CT20} this version of the Kolmogorov
extension theorem is proven for $\nu=\delta_{x}$ and $x\in Y$. In the proof of
\cite[Theorem 2.11]{CT20} it is shown that the transition kernels
$p_{t}(x,\cdot)$ for $x\in Y$ and $t\geq 0$ given through the relation
\begin{align*}
P_{t}f(x)=\int_{Y}f(\xi)\,p_{t}(x,\D \xi),  
\end{align*}
form a Kolmogorov consistent family according to \cite[Section 15.6]{AB06}.
It is then concluded from a Kolmogorov extension theorem given by \cite[Theorem
15.23]{AB06} that there exists a probability measure $\MP_{\delta_{x}}$ such
that the assertions of the proposition hold. We draw the same conclusion for any other probability measures $\nu$ in
$\cM(Y)$ satisfying  
\begin{align}\label{eq:comapct-class-tight}
\nu(A)=\sup\set{\mu(K)\colon K\subseteq A\colon\, A\in\mathcal{K}\text{ and }A\in \cB(Y)\cap
  \mathcal{K}},  
\end{align}

where $\mathcal{K}$ is a compact class in $Y$. Note that every weighted
space $Y$ is a Hausdorff topological space and hence the family $\cK$ of all
compact sets of $Y$ forms a compact class, see \cite[Theorem 2.31]{AB06}. We
thus see that every inner-regular probability measure $\nu\in\cM(Y)$
satisfies~\eqref{eq:comapct-class-tight} and hence the assertion of the
proposition follows from this, Kolmogorov extension theorem \cite[Theorem
15.23]{AB06} and analogous arguments as in \cite[Theorem 2.11]{CT20}.
\end{proof}  

\section{A property of the Wasserstein distance}\label{sec:some-prop-wass-1}

\begin{lemma}\label{lem:Wasserstein-convolution}
  Let $W_2$ be the Wasserstein distance on $\mathcal{H}^+$. Let $\mu,\nu,\rho$ be Borel probability measures on $\mathcal{H}^+$. Then $W_2( \rho \ast \mu, \rho\ast \nu) \leq W_2(\mu,\nu).$
\end{lemma}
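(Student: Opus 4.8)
The plan is to lift an (almost) optimal coupling of $\mu$ and $\nu$ to a coupling of $\rho\ast\mu$ and $\rho\ast\nu$ by adding the \emph{same} $\rho$-distributed summand to both components. We may assume $W_{2}(\mu,\nu)<\infty$, otherwise there is nothing to prove. Recall that $\rho\ast\mu$ is the law of $Z+X$ where $Z\sim\rho$ and $X\sim\mu$ are independent, and similarly for $\rho\ast\nu$; since $\cHplus$ is a closed convex cone with $\cHplus+\cHplus\subseteq\cHplus$, the sum stays in $\cHplus$ and both convolutions are well-defined probability measures on $\cHplus$.

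First I would fix an arbitrary coupling $G\in\cC(\mu,\nu)$ and introduce the continuous (hence Borel-measurable) addition map $S\colon\cHplus\times\cHplus\to\cHplus$, $S(z,w)=z+w$. On the space $\cHplus\times(\cHplus\times\cHplus)$ I would form the product measure $\rho\otimes G$ and define $H$ as its image under the measurable map $(z,x,y)\mapsto(z+x,\,z+y)$. The candidate coupling is $H$, and the crux is to check that its marginals are the two convolutions.

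For the marginals I would compose the two coordinate projections with the above map: the first gives $(z,x,y)\mapsto z+x$, whose pushforward of $\rho\otimes G$ only sees the first marginal $\mu$ of $G$, so by Fubini it equals the pushforward of $\rho\otimes\mu$ under $S$, which is exactly $\rho\ast\mu$; the analogous computation for the second projection yields $\rho\ast\nu$. Hence $H\in\cC(\rho\ast\mu,\rho\ast\nu)$. For the cost I would simply observe that the shared summand cancels:
\begin{align*}
 \int_{\cHplus\times\cHplus}\norm{a-b}^{2}\,H(\D a,\D b)
 &=\int_{\cHplus}\int_{\cHplus\times\cHplus}\norm{(z+x)-(z+y)}^{2}\,G(\D x,\D y)\,\rho(\D z)\\
 &=\int_{\cHplus\times\cHplus}\norm{x-y}^{2}\,G(\D x,\D y).
\end{align*}
By definition of $W_{2}$ as an infimum over couplings, $W_{2}(\rho\ast\mu,\rho\ast\nu)^{2}\leq\int\norm{x-y}^{2}\,G(\D x,\D y)$, and taking the infimum over all $G\in\cC(\mu,\nu)$ gives $W_{2}(\rho\ast\mu,\rho\ast\nu)^{2}\leq W_{2}(\mu,\nu)^{2}$, as claimed.

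The argument is essentially routine, and the only point demanding care is the identification of the marginals of $H$: this is where the \emph{product} structure $\rho\otimes G$ (i.e.\ the independence of the common summand $Z$ from the pair $(X,Y)$) is used, via Fubini, to reduce each marginal pushforward to a genuine convolution. I deliberately work with an arbitrary coupling $G$ rather than an optimal one, so that the proof does not rely on the existence of optimal couplings on the (Polish) space $\cHplus$; the infimum is taken only at the very end.
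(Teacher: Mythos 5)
Your proof is correct and takes essentially the same approach as the paper: both lift a coupling $G$ of $(\mu,\nu)$ to a coupling of $(\rho\ast\mu,\rho\ast\nu)$ by adding the same $\rho$-distributed summand to both components, so the added terms cancel in the cost, and the infimum over $G$ is taken only at the end. The sole (cosmetic, in fact mildly advantageous) difference is that the paper first works with an arbitrary coupling $G'$ of $(\rho,\rho)$, expands $\|(x+z)-(y+z')\|^{2}$ and cancels the cross term using the equal marginals before specializing $G'$ to the diagonal coupling, whereas you use the diagonal coupling from the outset, which sidesteps the integrability issues implicit in the paper's intermediate expansion when $\rho$ lacks second moments.
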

\begin{proof}
 Let $G$ be any coupling of $(\mu,\nu)$ and let $G'$ be any coupling of $(\rho,\rho)$. For each $f,g: \mathcal{H} \longrightarrow \R_+$ we find that
 \begin{align*}
   &\   \int_{\mathcal{H}^+ \times \mathcal{H}^+} \left( f(x) + g(y) \right)(G'\ast G)(\D x,\D y)
     \\ &= \int_{\mathcal{H}^+ \times \mathcal{H}^+}\int_{\mathcal{H}^+ \times \mathcal{H}^+} \left( f(x+z) + g(y+z') \right)G'(\D z,\D z')G(\D x,\D y)
     \\ &= \int_{\mathcal{H}^+ \times \mathcal{H}^+}f(x+z)\rho(\D z)\mu(\D x) + \int_{\mathcal{H}^+ \times \mathcal{H}^+}g(y+z')\rho(\D z')\nu(\D y)
 \end{align*}
 which shows that $G' \ast G$ is a coupling of $(\rho\ast \mu, \rho\ast \nu)$. Hence
 \begin{align*}
   &\  W_2(\rho \ast \mu, \rho \ast \nu)^2
    \\ &\leq \int_{\mathcal{H}^+ \times \mathcal{H}^+} \| x-y\|^2 (G' \ast G)(\D x,\D y)
     \\ &= \int_{\mathcal{H}^+ \times \mathcal{H}^+}\int_{\mathcal{H}^+ \times \mathcal{H}^+} \| (x+z) - (y+z')\|^2 G'(\D z,\D z')G(\D x,\D y)
     \\ &= \int_{\mathcal{H}^+ \times \mathcal{H}^+}\int_{\mathcal{H}^+ \times \mathcal{H}^+} \left( \| x-y\|^2 + 2\langle x-y, z-z'\rangle + \| z-z'\|^2 \right) G'(\D z,\D z')G(\D x,\D y)
     \\ &= \int_{\mathcal{H}^+ \times \mathcal{H}^+} \| x-y\|^2 G(\D ,\D y) + \int_{\mathcal{H}^+ \times \mathcal{H}^+} \|z-z'\|^2 G'(\D z,\D z') 
 \end{align*}
 where the last inequality follows from the fact that $G'$ has the same marginals so that
 $\int_{\mathcal{H}^+ \times \mathcal{H}^+} \langle x-y,z-z'\rangle G'(\D z,\D z') = 0$.
 Letting now $G'$ be the specific coupling determined by $G'(A\times B) = \rho\left( \{ z \in \mathcal{H}^+ \ : \ z \in A \cap B \}\right)$ with $A,B \in \mathcal{B}(\mathcal{H}^+)$,
 shows that $\int_{\mathcal{H}^+ \times \mathcal{H}^+} \|z-z'\|^2 G'(\D z,\D z') =
 0$. Since $G$ was arbitrary, the assertion is proved. 
\end{proof}

\bibliographystyle{acm}
\bibliography{LongTimeBehavior}
\end{document}